\renewcommand{\paragraph}{\@startsection{paragraph}{4}{0ex}%
   {-3.25ex plus -1ex minus -0.2ex}%
   {1.5ex plus 0.2ex}%
   {\normalfont\normalsize\bfseries}}
\providecommand{\U}[1]{\protect\rule{.1in}{.1in}}
\newtheorem{thm}{Theorem}
\newtheorem{prop}{Proposition}
\newtheorem{lem}{Lemma}
\newtheorem{claim}{Claim}
\newenvironment{proof}[1][Proof]
{\noindent\textbf{#1:} }{\hfill\rule{0.5em}{0.5em}}
\newcommand{{\resizebox{}{!}{\input .pstex_t}}}[2]{{\resizebox{#1}{!}{\input #2.pstex_t}}}
\newtheorem{theorem}{Theorem}[section]
\newtheorem{remark}[theorem]{Remark}
\begin{document}


\title{\bf On the Uniform Controllability of the Inviscid and Viscous Burgers-$\alpha$ Systems} 


\author{
\textsc{Raul K.C. Ara\'ujo}\thanks{Department of Mathematics, Federal University of Pernambuco, UFPE, CEP 50740-545, Recife,
PE, Brazil. E-mail: {\tt  raul@dmat.ufpe.br}. Partially supported by CNPq (Brazil).}
\quad
\and
	\textsc{Enrique Fern\' andez-Cara}\thanks{University of Sevilla, Dpto. E.D.A.N, Aptdo 1160, 41080 Sevilla, Spain. E-mail: {\tt cara@us.es}. Partially supported by grant MTM$2016$-$76990$-P, Ministry of Economy and Competitiveness (Spain).}
	\and
	\textsc{Diego A. Souza}\thanks{Department of Mathematics, Federal University of Pernambuco, UFPE, CEP 50740-545, Recife,
PE, Brazil. E-mail: {\tt diego.souza@dmat.ufpe.br}. D.A. Souza was partially supported by CNPq - Brazil by the grant $313148$/$2017$-1, by Propesq - UFPE edital Qualis A, CAPES-PRINT, \#$88881.311964/2018-01$. }
}

\maketitle


\begin{abstract}
    This work is devoted to prove the global uniform exact controllability of the inviscid and viscous Burgers-$\alpha$ systems. 
    The state $y$ is the solution to a regularized Burgers equation, where the transport velocity $z$ consists of a filtered version of $y$ -- specifically 
  	$z=(Id-\alpha^2\partial^2_ {xx})^{-1}y$ with $\alpha>0$ being a small parameter -- in place of $y$. 
  	First, a global uniform exact controllability result for the nonviscous Burgers-$\alpha$ system with three scalar controls is obtained, using the return method. 
	Then, global exact controllability to constant states of  the viscous system is deduced from a local exact controllability result and a global approximate controllability
	result for smooth initial and target states.
\end{abstract}

\

\noindent {\bf Keywords:}  Burgers-$\alpha$ system, global exact controllability, return method.
\vskip 0.25cm\par\noindent
\noindent {\bf Mathematics Subject Classification:} 93B05, 35Q35, 35G25

\tableofcontents


\section{Introduction}

	Let $L>0$ and $T>0$ be given. Let us present the notations used along this work. The symbols $C$, $\widehat C$ and $C_{i}$, $i = 0, 1, \dots$ stand for positive constants
	(usually depending on $L$ and~$T$). For any $r\in \left[ 1,+\infty\right]$ and any given Banach space $X$, $\|\cdot\|_{L^{r}(X)}$
	will denote the usual norm in Lebesgue-Bochner space $L^{r}(0,T;X)$. In particular, the norms in $L^{r}(0,L)$ and $L^{r}(0,T)$  will be denoted by $\|\cdot\|_{r}$.
	In this paper, we will consider the following two families of controlled systems:
\begin{equation}\label{es14}
          \left\{
	          \begin{array}{lcl}
	          	\noalign{\smallskip}\displaystyle
        			y_t + zy_x = p(t) 			&	\mbox{in}	& (0,T)\times(0,L),\\
			\noalign{\smallskip}\displaystyle
                        	z - \alpha^2 z_{xx} = y		& 	\mbox{in}	& (0,T)\times(0,L),\\ 
                        	\noalign{\smallskip}\displaystyle
			z(\cdot,0) = y(\cdot,0) = v_l	& 	\mbox{in}	& (0,T),\\
			\noalign{\smallskip}\displaystyle
			z(\cdot,L) = y(\cdot,L) = v_r	& 	\mbox{in}	&(0,T),\\
                		\noalign{\smallskip}\displaystyle
			y(0,\cdot) = y_0 			&	\mbox{in}	& (0,L)
          	\end{array}
         \right.
\end{equation}
         	and
\begin{equation}\label{viscous}
          \left\{
          	\begin{array}{lcl}
          	\noalign{\smallskip}\displaystyle
		y_t - \gamma y_{xx} + zy_x = p(t) 		&\mbox{in}	&(0,T)\times (0,L),\\
		\noalign{\smallskip}\displaystyle
		z - \alpha^2 z_{xx} = y 		&\mbox{in}	&(0,T)\times (0,L),\\
		\noalign{\smallskip}\displaystyle
          		z(\cdot,0) =  y(\cdot,0) = v_l 	&\mbox{in}	&(0,T),\\
		\noalign{\smallskip}\displaystyle
          		z(\cdot,L) = y(\cdot,L) = v_r	&\mbox{in}	&(0,T),\\
		\noalign{\smallskip}\displaystyle
          		y(0,\cdot) = y_0 			&\mbox{in}	&(0,L).
          	\end{array}
          \right.
\end{equation}
	These are the so called inviscid and viscous Burgers-$\alpha$ systems. The pairs $(y,z)$ and the triplets $(p,v_l,v_r)$ respectively stand for the corresponding 
	states and controls. 
	
	The physical motivations these systems are explained in \cite{bhat}, where it is shown that \eqref{es14} and \eqref{viscous} can be viewed as an asymptotically 
	equivalent approximation of the shallow water equations. Thus, $z$ can be viewed as the fluid velocity in the $x$ direction (or equivalently the height of the free surface of the 
	fluid above a flat bottom). The parameter $\gamma>0$ is the fluid viscosity and the role of $\alpha$ is to regularize the transport velocity $z$. Indeed, \eqref{es14} and \eqref{viscous} can be 
	regarded as nonlinear regularizations of the inviscid and viscous Burgers equation, see \cite{bhat}. Therefore, it is natural to try to deduce control properties and/or 
	estimates independent of $\alpha$. For simplicity, throughout this paper we will take $\gamma =1$ (all the results can be extended without difficulty to 
	the case where  $\gamma$ is an arbitrary positive number).

     	On the other hand, systems like \eqref{es14} and \eqref{viscous} can also be viewed as simplified 1D versions of the so called Leray-$\alpha$ system introduced some time ago to describe turbulent
     flows as an alternative to the classical averaged Reynolds models, see \cite{lerayalpha, Holm, Foias,manley}. It is also important to highlight that Leray-$\alpha$ models are related to the systems analyzed by Leray in \cite{Leray} to prove the existence of Navier-Stokes equations.


	Our two main results deal with the global uniform exact controllability (with respect to  $\alpha$) for systems \eqref{es14} and \eqref{viscous}. 
	More precisely, one has:
\begin{thm}\label{thm1}
	Let $\alpha > 0$ and $T>0$ be given. The inviscid Burgers-$\alpha$ system \eqref{es14} is globally exactly controllable in $C^1$. That is, 
	for any given $y_0, y_T \in C^1([0,L])$, there exist a time-dependent control $p^{\alpha} \in C^0([0,T])$, a couple of boundary controls $(v_l^{\alpha},v_r^{\alpha}) \in C^1([0,T];\mathbb{R}^2)$ and an associated 
	state $(y^{\alpha}, z^{\alpha}) \in C^1([0,T]\times [0,L];\mathbb{R}^2)$ satisfying \eqref{es14} and
       \begin{equation}\label{mg3}
       		y^{\alpha}(T,\cdot) = y_T\quad\mbox{in}\quad (0,L).
        \end{equation}
        	 Moreover, there exists a positive constant $C>0$ (independent of $\alpha$) such that
\begin{equation*}\label{v-unif}
    \|(z^\alpha,y^\alpha)\|_{C^1([0,T]\times[0,L];\mathbb{R}^2)}+	\|p^{\alpha}\|_{C^0([0,T])}+
	\|(v_l^{\alpha},v_r^{\alpha})\|_{C^1([0,T];\mathbb{R}^2)}\leq C.
\end{equation*}
\end{thm}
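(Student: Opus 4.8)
The plan is to use the return method of J.-M. Coron, combined with the method of characteristics for the transport part and elliptic regularity for the filter equation. First I would reduce the problem to driving the state to rest: if I can steer any $C^1$ datum $y_0$ to the zero state in time $T/2$, then by reversing time I can also steer from $0$ to any $C^1$ target $y_T$ in time $T/2$, and concatenating the two trajectories gives the full controllability statement; moreover the uniform bound will follow by inspecting the (explicit) construction. So from now on the goal is: given $y_0\in C^1([0,L])$, construct controls $(p,v_l,v_r)$ and a state $(y,z)$ on $(0,T/2)\times(0,L)$ with $y(T/2,\cdot)=0$.

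\medskip

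The heart of the return method is to first exhibit a particular trajectory $(\bar y,\bar z,\bar p,\bar v_l,\bar v_r)$ of \eqref{es14} that starts and ends at the zero state but along which the transport velocity $\bar z$ is large enough that all characteristics entering the strip $(0,T/2)\times(0,L)$ cross it from $x=0$ to $x=L$ (or vice versa) before time $T/2$. The natural choice is a spatially affine profile: I would look for $\bar y(t,x)=a(t)+b(t)x$. For such a profile the elliptic problem $\bar z-\alpha^2\bar z_{xx}=\bar y$ with $\bar z(t,0)=\bar y(t,0)$, $\bar z(t,L)=\bar y(t,L)$ has the explicit solution $\bar z(t,x)=a(t)+b(t)x$ as well (an affine function is annihilated by $\partial_{xx}$ and already matches the boundary data), so $\bar z=\bar y$ along this reference trajectory — this is the key simplification that makes the $\alpha$-dependence disappear on the reference solution, hence the uniformity in $\alpha$. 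Plugging into the first equation, $\bar y_t+\bar z\bar y_x=\dot a+\dot b\,x+(a+bx)b=\bar p(t)$ forces the coefficient of $x$ to vanish, i.e. $\dot b+b^2=0$, and then $\bar p(t)=\dot a+ab$. Choosing $b(t)$ to solve $\dot b=-b^2$ with a large negative initial value (so $b<0$, $|b|$ large, on $[0,T/2]$) makes the characteristics $\dot X=\bar z(t,X)=a+bX$ sweep the interval quickly; choosing $a\equiv 0$ gives $\bar y\equiv 0$ at the initial and final times automatically once we also arrange $b$ appropriately, or more flexibly one takes $a$ and $b$ smooth, compactly supported in $(0,T/2)$ in a suitable sense so that $\bar y(0,\cdot)=\bar y(T/2,\cdot)=0$. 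Then $\bar v_l=\bar y(t,0)=a(t)$ and $\bar v_r=\bar y(t,L)=a(t)+b(t)L$ are the boundary controls, all in $C^1$.

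\medskip

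Next comes the perturbation argument: I write $y=\bar y+\widetilde y$, $z=\bar z+\widetilde z$ and look for a solution with $\widetilde y(0,\cdot)=y_0$, $\widetilde y(T/2,\cdot)=0$, keeping $p=\bar p$ fixed (no extra source control is needed beyond the reference one) and only adjusting the boundary data. Because the reference characteristics all exit the domain in time $<T/2$, the transport nature of the equation means the value of $y$ at time $T/2$ at any interior point is determined by boundary data imposed on the inflow part of $\{x=0\}\cup\{x=L\}$; one prescribes the inflow boundary value to be $0$ from some intermediate time on, and transports $y_0$ out of the domain. Concretely, I would solve the coupled system forward by a fixed-point/iteration scheme: given $z$, solve the linear transport equation for $y$ by characteristics (this needs $z\in C^1$ and $z$ not tangent to the boundary, which holds near the reference since $\bar z(t,0)=a$, $\bar z(t,L)=a+bL$ are the nonzero normal velocities); given $y$, solve $z-\alpha^2 z_{xx}=y$ with the matching Dirichlet data by elliptic regularity, obtaining $z\in C^1$ with a bound on $\|z\|_{C^1}$ in terms of $\|y\|_{C^1}$ that is \emph{uniform in }$\alpha$ on bounded sets — here one uses that the Dirichlet data equals the trace of $y$, so $z-y$ solves an elliptic problem with zero boundary data and right-hand side $\alpha^2 y_{xx}$, giving $\|z-y\|\lesssim \alpha^2\|y_{xx}\|$ in $L^2$-type norms and, after bootstrapping, control of $z$ in $C^1$ by $y$ in $C^2$; a little care with the regularity indices (working in $C^1$ in $x$ for $y$ but needing $C^1$ for $z$) shows the scheme closes. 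A contraction estimate on a short time interval, iterated, or a Schauder fixed-point argument on a suitable convex set of $C^1$ functions, yields existence; the smallness of $\widetilde y$ is guaranteed because $y_0$ enters only through the transported part, which is flushed out.

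\medskip

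The main obstacle I anticipate is \emph{not} the controllability idea itself but making the whole construction genuinely \emph{uniform in }$\alpha$ and carrying the regularity through the nonlinear coupling: one must show that the fixed-point radius, the time step in the iteration, and all the constants in the characteristic estimates depend only on $\|y_0\|_{C^1}$, $\|y_T\|_{C^1}$, $L$, $T$ and on the reference data $(a,b)$ — but never on $\alpha$. The delicate point is the elliptic regularity step: a naive bound $\|z\|_{C^2}\le C(\alpha)\|y\|_{C^0}$ blows up as $\alpha\to0$, so instead I would prove the $\alpha$-free bound $\|z\|_{C^1([0,L])}\le C\|y\|_{C^1([0,L])}$ (with $C$ independent of $\alpha$) by an explicit Green's-function representation for $(Id-\alpha^2\partial_{xx})^{-1}$ on $(0,L)$ with Dirichlet data, noting that the kernel is an approximate identity of mass one and that the boundary-layer terms are controlled because the Dirichlet data is exactly the trace of $y$. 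Once this uniform elliptic estimate is in hand, the rest of the perturbation argument gives constants that are uniform in $\alpha$, and passing the bounds through the concatenation at $t=T/2$ completes the proof of both \eqref{mg3} and the claimed uniform estimate.
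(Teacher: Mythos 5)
Your skeleton — return method with a reference trajectory that passes through the filter unchanged, a characteristics/fixed-point argument for the perturbed transport equation, and an $\alpha$-uniform elliptic bound via the maximum principle — is the same as the paper's, but two steps fail as written. First, the affine ansatz $\bar y(t,x)=a(t)+b(t)x$ is essentially vacuous: the constraint $\dot b+b^2=0$ has no nonzero solution vanishing at both endpoints (so requiring $\bar y(0,\cdot)=\bar y(T/2,\cdot)=0$ forces $b\equiv 0$), for $b(0)=b_0<0$ the solution $b(t)=b_0/(1+b_0t)$ blows up at $t=1/|b_0|<T/2$, and with $a\equiv 0$ the transport speed $b(t)x$ vanishes at $x=0$, so characteristics issued near $x=0$ never leave $[0,L]$ and that part of the datum is never flushed out. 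The only consistent choice in your family is $b\equiv 0$, $a=\lambda$ with $\int_0^{T/2}\lambda\,dt>L$, i.e. exactly the spatially constant trajectory $(\lambda(t),\lambda(t))$ with $p=\lambda'$ that the paper uses.

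The more serious gap is the passage to arbitrary data. Your fixed-point scheme, like the paper's, can only close when the perturbation of the transport velocity is small compared with the reference drift: by the maximum principle $\widetilde z$ is of the size of $\|y_0\|_{C^0}$, and the perturbed characteristics cross $[0,L]$ before time $T/2$ only if this is dominated by $\int\lambda$. Your justification that ``smallness of $\widetilde y$ is guaranteed because $y_0$ is flushed out'' is circular — the flushing presupposes that the characteristics still traverse the domain, which is precisely what smallness is needed for. So the argument as given yields only a local (small $\|y_0\|_{C^1}$) null controllability result, which is the paper's Proposition on local null controllability, not the global theorem. The paper bridges this with the time-scaling invariance of \eqref{es14}: if $(y,z)$ solves the system, so does $\gamma^{-1}y(t/\gamma,x)$, so one controls the shrunken datum $\gamma_0 y_0$ (with $\gamma_0$ small) on a short interval and rescales, and treats the target analogously with time reversal combined with the spatial reflection $x\mapsto L-x$ (or a sign change $y\mapsto -y$) — note that time reversal alone, as you invoke it, is not a symmetry of the quadratic transport term. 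Without this scaling step, or an equivalent device such as taking $\lambda$ large depending on $\|y_0\|_{C^1}$ and re-deriving all the fixed-point constants, the proposal does not reach the global statement or the claimed uniform bound.
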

\begin{thm}\label{thm2}
	Let  $\alpha > 0$ and $T>0$ be given. The viscous Burgers-$\alpha$ system \eqref{viscous} is globally exactly controllable in $L^\infty$ to constant trajectories. That is, 
	for any given $y_0\in L^\infty(0,L)$ and $N\in \mathbb{R}$, there exist controls $p^{\alpha} \in C^0([0,T])$ and $(v_l^{\alpha}, v_r^{\alpha}) \in H^{3/4}(0,T; \mathbb{R}^2)$ 
	and associated states $(y^{\alpha},z^{\alpha}) \in L^2(0,T;H^1(0,L;\mathbb{R}^2))\cap L^\infty(0,T;L^\infty(0,L;\mathbb{R}^2))$ 
	satisfying \eqref{viscous}, 
	\begin{equation}\label{mg}
        		y^{\alpha}(T,\cdot)= z^{\alpha}(T,\cdot) = N \quad\mbox{in}\quad (0,L)
        \end{equation}
    and the following estimates
\begin{equation*}\label{v-unif}
    \|p^{\alpha}\|_{C^0([0,T])}+
	\|(v_l^{\alpha},v_r^{\alpha})\|_{H^{3/4}([0,T];\mathbb{R}^2)}\leq C,
\end{equation*}
    where $C$ is a positive constant independent of $\alpha$.
    Moreover, if $y_0\in H^1_0(0,L)$ then the same conclusion holds with the
    $(y^{\alpha},z^{\alpha})\in L^2(0,T;H^2(0,L;\mathbb{R}^2))\cap H^1(0,T;L^2(0,L;\mathbb{R}^2)).$
\end{thm}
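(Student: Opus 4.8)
The plan is to follow the classical "local exact controllability + global approximate controllability" scheme, but with all estimates uniform in $\alpha$. First I would establish a \emph{global approximate controllability} result: starting from $y_0 \in L^\infty(0,L)$, I use the time-dependent control $p(t)$ together with the boundary controls $v_l,v_r$ to drive the state, over a fixed time interval $[0,T/3]$, into a small $L^2$-neighborhood of the constant $N$. The idea here is a scaling/large-control argument of the type used by Fernández-Cara and Guerrero for the viscous Burgers equation: using $p$ and the transport term $zy_x$, one first "transports" the initial data out of the domain (or overwhelms the dynamics) so that at an intermediate time the state is close to an affine profile, then close to $N$. Because $z = (Id-\alpha^2\partial_{xx}^2)^{-1}y$ satisfies $\|z\|_{H^1}\le C\|y\|_{L^2}$ and $\|z\|_{L^\infty}\le C\|y\|_{L^\infty}$ with $C$ independent of $\alpha$ (this uniform smoothing estimate is what I must check carefully), the $\alpha$-dependence drops out of all a priori bounds; the regularizing parameter only makes the transport velocity smoother, which helps rather than hurts.

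Second, I would prove a \emph{local exact controllability to the constant trajectory $N$}: if at time $T/3$ the state $y(T/3,\cdot)$ is sufficiently close to $N$ in an appropriate norm (say $H^1_0$ after translating by $N$, or a suitable interpolation space compatible with the $H^{3/4}$ boundary regularity), then there exist controls steering it exactly to $y\equiv z\equiv N$ at time $2T/3$. This is a linearization-plus-fixed-point argument: linearize \eqref{viscous} around $(N,N)$, obtaining a linear heat equation with a transport term $N\,y_x$ and a source built from the filtered velocity; prove a Carleman-based null controllability estimate for the linearized adjoint system, yielding a controlled solution with the claimed $C^0([0,T])\times H^{3/4}(0,T;\mathbb R^2)$ regularity and a bound uniform in $\alpha$; then close the nonlinear problem by a Banach or Schauder fixed point using the smoothing of the $z$-equation to absorb the quadratic term $(z-N)y_x$. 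On $[2T/3,T]$ one simply keeps the trivial control $p\equiv 0$, $v_l=v_r=N$, so the solution stays at $N$.

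For the second ("moreover") statement with $y_0\in H^1_0(0,L)$, I would redo the approximate-controllability step keeping track of $H^1$ bounds and use parabolic regularity for the $y$-equation (the term $zy_x$ is controlled by $\|z\|_{L^\infty}\|y_x\|_{L^2}$, uniformly in $\alpha$) to upgrade the solution to $L^2(0,T;H^2)\cap H^1(0,T;L^2)$; the local step already produces that regularity provided the target smallness is measured in a strong enough norm, so it is mainly the global step and the gluing at $t=T/3,\,2T/3$ that require compatible trace conditions.

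The main obstacle I expect is the \textbf{global approximate controllability} step: unlike the scalar viscous Burgers equation, here the convective velocity is the \emph{nonlocal filtered} quantity $z=(Id-\alpha^2\partial_{xx}^2)^{-1}y$, so the usual trick of explicitly transporting data with a large velocity must be carried out with a velocity one does not directly control. One must show that choosing $y$ (hence $z$) large and essentially monotone still produces the desired transport, and that the resulting bounds are genuinely independent of $\alpha$ — which forces a careful, $\alpha$-uniform analysis of the elliptic filter on the interval $(0,L)$ with the inhomogeneous boundary data $v_l,v_r$. A secondary technical point is matching the regularity of the boundary controls across the three subintervals so that the concatenated controls lie in $H^{3/4}(0,T;\mathbb R^2)$; this requires the local-controllability construction to deliver controls vanishing (together with the appropriate traces) at the gluing times.
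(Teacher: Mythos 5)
Your overall architecture (global approximate controllability towards the constant $N$, then local exact controllability to the constant trajectory, then rest) is the same as the paper's, but two concrete points in your plan do not hold as stated. First, the ``uniform smoothing estimate'' you rely on is false: from $z-\alpha^2 z_{xx}=y$ one only gets $\|z\|_{L^2}\leq C\|y\|_{L^2}$, $\alpha\|z_x\|_{L^2}\leq C\|y\|_{L^2}$ and the maximum-principle bound $\|z\|_{L^\infty}\leq \|y\|_{L^\infty}$ (plus boundary terms) uniformly in $\alpha$; there is no $\alpha$-independent bound of the form $\|z\|_{H^1}\leq C\|y\|_{L^2}$, since as $\alpha\to 0$ the filter tends to the identity and $z\to y\in L^2$. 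So the claim that ``the regularizing parameter only makes the transport velocity smoother, which helps rather than hurts'' cannot be used to produce extra regularity uniformly in $\alpha$; all uniform bounds must go through the $L^\infty$/$L^2$ estimates, exactly as in the paper (estimates \eqref{mg5}, \eqref{mg55}).

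Second, and more importantly, your global approximate controllability step directly from $y_0\in L^\infty(0,L)$ is where the argument would break. The large-drift (return-method) construction reduces the viscous problem, over a short interval of length $\tau$, to the inviscid transported profile plus a viscous remainder; the remainder equation contains $u^{\alpha,\tau}_{xx}$ as a source, so the $O(\sqrt{\tau})$ estimate in $H^1$ requires the inviscid controlled trajectory to be bounded in $C^2$ \emph{uniformly in $\alpha$ and $\tau$}. With merely $L^\infty$ data this fails. The paper resolves this by inserting a preliminary phase with zero controls on which the parabolic smoothing effect (Proposition \ref{re9}, with bounds independent of $\alpha$) yields $C^2$ data, and only then applies the approximate controllability result (Proposition \ref{re}), which is stated for $C^2$ initial and target states and rests on the $C^2$ inviscid controllability of Remark \ref{rm:1}. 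Your proposal omits this regularization stage, and also targets only an $L^2$-neighborhood of $N$, whereas the local exact controllability step (the analogue of Theorem \ref{consttrajcont}) needs smallness of $y-N$ in $H^1$; both gaps are fixable by adding the smoothing phase (and measuring the approximate step in $H^1$, as the paper's bound $K\sqrt{\tau}$ does), but as written the plan is incomplete at precisely the step you yourself flag as the main obstacle.
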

	\begin{remark}
		\textrm{We will see in the proofs of the above results that the distributed control $p^\alpha$ is independent of $\alpha$, it only depends on $T$, $L$, the initial condition and the target state. }
	\end{remark}
		In this paper, we are going to deal with situations that lead to new difficulties compared to previous
	works on nonlinear parabolic equations. Let us discuss these differences:
\begin{itemize}
	\item Nonlocal nonlinearities. In the \eqref{es14} and \eqref{viscous}, the usual convective term is replaced and a filtered (averaged) velocity appears.
As a consequence, the arguments in \cite{Chapouly} must be modified, as shown below.
	
	\item Uniform controllability. Performing careful estimates of the controls, global uniform controllability results are obtained. This way, we are able to generalize some previous control results  to the context of nonlinear parabolic equations with nonlocal nonlinearities, see \cite{Chapouly,marbach}. 
\end{itemize}


	For completeness, let us mention some previous works on the control
	of our main systems and other similar models. There are a lot of important works dealing with the controllability properties of parabolic equations and systems,
	see \cite{doubova, fabre, zuazua, Fursikov} and the inviscid and viscous Burgers equation,
	see \cite{ Chapouly,xiang, Dias, Cara, Fursikov, Glass1, Guerrero, Horsin, marbach}. In the context of regularized Burgers equation, the local uniform null controllability for the viscous system \eqref{viscous} is studied in \cite{Araruna}. Finally, let us mention that, in \cite{diego},
	this local result was extended to any $b$-family (a collection of viscous Burgers-$\alpha$ equations equipped with additional nonlinear stretching term). 
	
	The rest of this paper is organized as follows.
	In~Section~\ref{sec:preliminares}, we prove some results concerning the existence, uniqueness and regularity of the solution to the viscous and inviscid
	Burgers-$\alpha$ systems.
	Sections~\ref{sec:inviscid} and \ref{sec:viscous} deal with the proofs of Theorems~\ref{thm1} and~\ref{thm2}, respectively.
	Finally, in~Section~\ref{sec:comments}, we present some additional comments and questions.


\section{Preliminaries} \label{sec:preliminares}


\subsection{Notations and Classical Results}

	Let us denote by $C^0_b(\mathbb{R})$ the Banach space of bounded continuous functions on $\mathbb{R}$ and let $C_x^{0,1}([0,T]\times\mathbb{R})$ be
	the space of functions $f : [0,T]\times \mathbb{R}\mapsto\mathbb{R}$ that are continuous in $x$ and $t$  and  globally Lipschitz-continuous in space, with Lipschitz constant independent of $t$.
	
	In the sequel, for any given $f\in C^0([0,T]\times\mathbb{R})$, the associated {\it flux function} $ \Phi = \Phi(s;t,x)$ is defined as follows:
\begin{equation}\label{flow3}
        \left\{
        		\begin{array}{l}
        			\noalign{\smallskip}\displaystyle	
     			\frac{\partial\Phi}{\partial t}(s;t,x) = f(t, \Phi(s;t,x)),\\
       			\noalign{\smallskip}\displaystyle	
        			\Phi^*(s;s,x) = x, 
        		\end{array}
        \right.
\end{equation}
         The mapping $\Phi$ contains all the information on the trajectories of the 
        particles transported by the velocity $f$. Furthermore, we have the following existence, uniqueness and regularity result:
        
\begin{prop}[Theorem $10.19$, \cite{ode}]\label{flow4}
        Assume that $f\in C_x^{0,1}([0,T]\times\mathbb{R})$ and $\frac{\partial f}{\partial x}$ belongs to \,\,\,\, $C^0([0,T]\times \mathbb{R})$. Then, there exists a unique flux associated to $f$, that is, a unique function $\Phi : [0,T]\times [0,T]\times \mathbb{R}\mapsto\mathbb{R}$ satisfying \eqref{flow3} for all $(s,x)\in [0,T]\times\mathbb{R}$. Moreover, $\Phi \in C^1([0,T]\times[0,T]\times\mathbb{R})$.
\end{prop}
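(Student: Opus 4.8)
The plan is to treat this as the classical Cauchy--Lipschitz (Picard--Lindel\"of) theorem with parameters, proceeding in three steps. Throughout, let $L_f$ be the Lipschitz constant of $f$ in $x$, which by hypothesis is independent of $t$, and set $M:=\sup_{t\in[0,T]}|f(t,0)|$; this is finite because $t\mapsto f(t,0)$ is continuous on the compact interval $[0,T]$, and it yields the linear growth bound $|f(t,x)|\le M+L_f|x|$ on $[0,T]\times\mathbb{R}$. \emph{Step 1: existence, uniqueness and $C^1$-in-time regularity for fixed data.} For fixed $(s,x)$, a continuous function $t\mapsto\Phi(s;t,x)$ solves \eqref{flow3} if and only if it is a fixed point of the operator $(\mathcal{T}\phi)(t):=x+\int_s^t f(\tau,\phi(\tau))\,d\tau$ on $C^0([0,T])$. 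I would run a contraction argument for $\mathcal{T}$ using the weighted (Bielecki) norm $\|\phi\|_*:=\sup_{t\in[0,T]}e^{-2L_f|t-s|}|\phi(t)|$, for which the Lipschitz hypothesis makes $\mathcal{T}$ a strict contraction; this produces a unique solution on the \emph{entire} interval $[0,T]$ (alternatively, local existence plus the a priori bound $|\Phi(s;t,x)|\le(|x|+MT)e^{L_fT}$ from Gr\"onwall's inequality rules out finite-time blow-up). By construction $t\mapsto\Phi(s;t,x)$ is $C^1$ with time derivative $f(t,\Phi(s;t,x))$.

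\emph{Step 2: joint continuity in $(s,t,x)$.} Next I would prove that $\Phi$ depends continuously on all of its arguments by the standard Gr\"onwall comparison. Given two data $(s_1,x_1)$ and $(s_2,x_2)$, the integral equation gives $|\Phi(s_1;s_2,x_1)-x_1|\le (M+L_fR)\,|s_1-s_2|$ on a region where $|\Phi|\le R$, and then comparing the two trajectories from the common time $s_2$ yields
$$
|\Phi(s_1;t,x_1)-\Phi(s_2;t,x_2)|\le\big(|x_1-x_2|+|\Phi(s_1;s_2,x_1)-x_2|\big)\,e^{L_f|t-s_2|}.
$$
Combining the two estimates with the continuity of $f$ shows that $\Phi$ is locally Lipschitz, hence continuous, on $[0,T]\times[0,T]\times\mathbb{R}$.

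\emph{Step 3: $C^1$ regularity.} The time derivative $\partial_t\Phi=f(\cdot,\Phi)$ is continuous by Step 2 and the continuity of $f$. For the derivative with respect to the initial state I would form the difference quotient $w_h(t):=h^{-1}\big(\Phi(s;t,x+h)-\Phi(s;t,x)\big)$, observe that $w_h(s)=1$ and $\partial_t w_h=g_h\,w_h$ with
$$
g_h(t):=\int_0^1 \partial_x f\big(t,\Phi(s;t,x)+\theta(\Phi(s;t,x+h)-\Phi(s;t,x))\big)\,d\theta ,
$$
and then pass to the limit $h\to0$: by the continuity of $\partial_x f$ together with Step 2, $g_h\to\partial_x f(\cdot,\Phi(s;\cdot,x))$ uniformly on $[0,T]$, so $w_h$ converges uniformly to the solution $y$ of the linearized (variational) equation $\partial_t y=\partial_x f(t,\Phi)\,y$, $y(s)=1$. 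Hence $\partial_x\Phi(s;t,x)=\exp\!\big(\int_s^t\partial_x f(\tau,\Phi(s;\tau,x))\,d\tau\big)$, which is continuous in $(s,t,x)$. Finally, differentiating the flow (semigroup) identity $\Phi(\sigma;t,\Phi(s;\sigma,x))=\Phi(s;t,x)$ with respect to $\sigma$ and setting $\sigma=s$ gives $\partial_s\Phi(s;t,x)=-f(s,x)\,\partial_x\Phi(s;t,x)$, again continuous. Since all first-order partial derivatives of $\Phi$ exist and are continuous, $\Phi\in C^1([0,T]\times[0,T]\times\mathbb{R})$.

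I expect the main obstacle to be the differentiability part of Step 3: the variational equations for $\partial_x\Phi$ and $\partial_s\Phi$ are formally immediate, but justifying that the difference quotient $w_h$ genuinely converges requires the \emph{uniform} convergence $g_h\to\partial_x f(\cdot,\Phi)$, and this is exactly where the hypothesis $\partial_x f\in C^0([0,T]\times\mathbb{R})$ is essential — with only the Lipschitz bound on $f$ one would obtain $\Phi$ Lipschitz, but not $C^1$, in $x$.
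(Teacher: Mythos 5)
The paper offers no proof of this proposition at all: it is quoted verbatim as Theorem 10.19 of the ODE textbook \cite{ode}, so there is nothing internal to compare against. Your argument is the standard Cauchy--Lipschitz proof of that textbook result (Picard iteration with a Bielecki norm for global existence and uniqueness, Gr\"onwall for joint Lipschitz continuity, and the variational equation with the averaged derivative $g_h$ for $C^1$ dependence on $x$), and it is correct; in the scalar setting the explicit formula $\partial_x\Phi=\exp\bigl(\int_s^t\partial_x f(\tau,\Phi(s;\tau,x))\,d\tau\bigr)$ indeed closes the argument, and your remark that continuity of $\partial_x f$ is exactly what upgrades Lipschitz dependence to $C^1$ dependence is the right point to emphasize. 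The only step stated too loosely is the derivative in $s$: ``differentiating the flow identity in $\sigma$'' presupposes the very differentiability you are establishing, so you should instead write the difference quotient directly, using $\Phi(s;t,x)=\Phi(s+h;t,\Phi(s;s+h,x))$, the already-proven $C^1$ regularity in the spatial argument (mean value theorem), and $\Phi(s;s+h,x)=x+h\,f(s,x)+o(h)$, which yields $\partial_s\Phi(s;t,x)=-f(s,x)\,\partial_x\Phi(s;t,x)$ without circularity; this is a one-line repair, not a structural flaw.
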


    Under the assumptions of Proposition \ref{flow4}, it is well known that, for each $s,t$ in $[0,T]$, the mapping 
    $\Phi(s;t,\cdot) :\mathbb{R}\mapsto\mathbb{R}$ is a diffeomorphism, with
    \[
        \Phi(s;t,\cdot)^{-1} = \Phi(t;s,\cdot).
    \]
        
        Let us now recall a result from  Bardos and Frisch \cite{bardos}. To this purpose, let us first note that, for any given Banach space $X$ with 
        norm $\|\cdot\|_X$ and any function $u \in C^1([0,T]; X)$, the following inequality holds:
\begin{equation}\label{es25}
	\dfrac{d}{dt^+}\|u(t)\|_{X} \leq \left\|u_t(t)\right\|_{X}\quad\mbox{in}\quad(0,T),
\end{equation}
	where $d/dt^+$ represents the right derivative. 
 \begin{prop}[Lemma $1$, \cite{bardos}]\label{es28}
	 Let $v\in C^0([0,T]; C^{0,1}_b(\mathbb{R}))\cap C^{0,1}_x([0,T]\times\mathbb{R})$ and $g\in C^0([0,T]; C_b^0(\mathbb{R}))$ be given. Then, any solution
	 $y\in C^0([0,T];C_b^1(\mathbb{R}))\cap C^1([0,T];C_b^0(\mathbb{R}))$ to the equation
	   \begin{equation} \label{es26}
	   y_t + vy_x = g \quad\mbox{in}\,\, (0,T)\times\mathbb{R}
	   \end{equation}
	   satisfies the following inequality
	   \begin{equation*}
	   \dfrac{d}{dt^+}\|y(t,\cdot)\|_{C_b^0(\mathbb{R})} \leq \|g(t,\cdot)\|_{C_b^0(\mathbb{R})}\quad\mbox{in}\,\,(0,T). 
	   \end{equation*}
	 \end{prop}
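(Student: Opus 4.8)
The plan is to use the method of characteristics, exploiting the fact that the assumed regularity of $y$ is strong enough to make the value of $y$ along characteristic curves a genuine $C^1$ function of time. First I would set up the flow. Since $v\in C^0([0,T];C^{0,1}_b(\mathbb{R}))$ is bounded and globally Lipschitz in $x$ with a Lipschitz constant independent of $t$, the Cauchy problem \eqref{flow3} with $f=v$ has, for every $(s,x)\in[0,T]\times\mathbb{R}$, a unique solution $t\mapsto\Phi(s;t,x)$ defined on all of $[0,T]$; global existence follows from the a priori bound $|\Phi(s;t,x)-x|\le\|v\|_{L^\infty(C^0_b)}\,|t-s|$, and reversibility of the ODE together with uniqueness shows that each map $\Phi(s;t,\cdot):\mathbb{R}\to\mathbb{R}$ is a bijection (indeed a homeomorphism), with inverse $\Phi(t;s,\cdot)$. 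Here I do not need the full $C^1$ regularity of Proposition~\ref{flow4} (whose hypothesis $v_x\in C^0$ is not available): I only use that $\Phi(s;t,\cdot)$ is continuous and onto $\mathbb{R}$, and that $t\mapsto\Phi(s;t,x)$ is $C^1$, all of which are classical. Alternatively one may regularize $v$ in $x$, apply Proposition~\ref{flow4}, and pass to the limit.

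Next I would fix $s\in[0,T)$ and $x\in\mathbb{R}$ and study $h(t):=y(t,\Phi(s;t,x))$. From $y\in C^0([0,T];C_b^1(\mathbb{R}))\cap C^1([0,T];C_b^0(\mathbb{R}))$, the functions $y$, $y_x$ and $y_t$ are all jointly continuous on $[0,T]\times\mathbb{R}$, and splitting the increment as
\[
  h(t+\delta)-h(t)=\bigl[y(t+\delta,\Phi(s;t+\delta,x))-y(t,\Phi(s;t+\delta,x))\bigr]+\bigl[y(t,\Phi(s;t+\delta,x))-y(t,\Phi(s;t,x))\bigr]
\]
and dividing by $\delta$ (using the integral forms $\int_0^\delta y_t\,d\sigma$ and $\int_0^1 y_x\,d\theta$ for the two brackets) shows that $h\in C^1([0,T])$ with $h'(t)=y_t(t,\Phi(s;t,x))+v(t,\Phi(s;t,x))\,y_x(t,\Phi(s;t,x))$. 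By \eqref{es26} this equals $g(t,\Phi(s;t,x))$, so the fundamental theorem of calculus and $\Phi(s;s,x)=x$ give the representation formula
\[
  y(t,\Phi(s;t,x))=y(s,x)+\int_s^t g\bigl(\tau,\Phi(s;\tau,x)\bigr)\,d\tau,\qquad t\in[0,T].
\]

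Then I would, for $t\ge s$, take absolute values and pass to the supremum over $x\in\mathbb{R}$. Since $x\mapsto\Phi(s;t,x)$ is a bijection of $\mathbb{R}$, $\sup_{x\in\mathbb{R}}|y(t,\Phi(s;t,x))|=\|y(t,\cdot)\|_{C_b^0(\mathbb{R})}$, while $|g(\tau,\Phi(s;\tau,x))|\le\|g(\tau,\cdot)\|_{C_b^0(\mathbb{R})}$ for all $x$ and $\tau$; hence
\[
  \|y(t,\cdot)\|_{C_b^0(\mathbb{R})}\le\|y(s,\cdot)\|_{C_b^0(\mathbb{R})}+\int_s^t\|g(\tau,\cdot)\|_{C_b^0(\mathbb{R})}\,d\tau,\qquad t\ge s.
\]
Dividing by $t-s>0$ and letting $t\downarrow s$, the right-hand side converges to $\|g(s,\cdot)\|_{C_b^0(\mathbb{R})}$ because $\tau\mapsto\|g(\tau,\cdot)\|_{C_b^0(\mathbb{R})}$ is continuous (a consequence of $g\in C^0([0,T];C_b^0(\mathbb{R}))$). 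This yields $\displaystyle\frac{d}{dt^+}\|y(t,\cdot)\|_{C_b^0(\mathbb{R})}\big|_{t=s}\le\|g(s,\cdot)\|_{C_b^0(\mathbb{R})}$, and since $s\in[0,T)$ was arbitrary the proposition follows.

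I expect the only genuinely delicate point to be the first step, namely justifying that the flow of $v$ is a well-defined bijection of $\mathbb{R}$ for each pair of times under the weak hypothesis that $v$ is merely Lipschitz (not $C^1$) in $x$, since Proposition~\ref{flow4} as stated does not apply verbatim; this is dispatched by elementary Cauchy--Lipschitz theory (global existence from boundedness of $v$, uniqueness and reversibility from the uniform-in-$t$ Lipschitz bound) or by the regularization argument mentioned above. Everything after that is the chain rule, the fundamental theorem of calculus, and the continuity of $\tau\mapsto\|g(\tau,\cdot)\|_{C_b^0(\mathbb{R})}$.
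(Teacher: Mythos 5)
Your proof is correct and follows essentially the same route as the paper: transport $y$ along the flow of $v$, use the PDE to see that the material derivative equals $g$, and use that $\Phi(s;t,\cdot)$ is a bijection of $\mathbb{R}$ to pass to the supremum and obtain the right-derivative bound. The only (minor, and welcome) differences are that you conclude via the integrated representation formula and a difference quotient instead of invoking \eqref{es25}, and that you explicitly justify the existence and invertibility of the flow under the mere Lipschitz-in-$x$ hypothesis, a point the paper glosses over by citing Proposition~\ref{flow4}.
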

	 \begin{proof}
	 Let $\Phi$ be the flow associated to $v$. For any $(s,t,x)\in [0,T]\times[0,T]\times\mathbb{R}$, we have by \eqref{es26} that
	 \begin{align*}
	 \dfrac{d}{dt}y(t,\Phi(s;t,x)) &= g(t,\Phi(s;t,x)). 
	 \end{align*}
	Using this identity and the fact that $\Phi(s;t,\cdot)$ is a diffeomorphism, we get
	 \begin{align*}
	\left\|\dfrac{d}{dt}y(t,\cdot)\right\|_{C_b^0(\mathbb{R})} &\leq \|g(t,\cdot)\|_{C_b^0(\mathbb{R})}.
	 \end{align*}
	 Now, the result follows easily from this and from \eqref{es25}.
	 \end{proof}
	
	The last result of this section is an immediate consequence of the Banach Fixed-Point Theorem:
\begin{thm}\label{fixed-point}
	Let $(E,\|\cdot\|_E)$ and $(F,\|\cdot\|_F)$ be Banach spaces with $F$ continuously embedded in $E$. Let $B$ be a subset of $F$ and let $G: B \mapsto B$ be a mapping such that
$$
	\|G(u)-G(v)\|_E\leq \gamma \|u-v\|_E\,\, ~\forall~u,v\in B,\hbox{ for some } \gamma \in [0,1).
$$
	Let us denote by $\widetilde B$ the closure of $B$ for the norm $\|\cdot\|_E$.
	Then, $G$ can be uniquely extended to a continuous mapping $\widetilde G: \widetilde B \mapsto \widetilde B$ that possesses a unique fixed-point in $\widetilde B$.
\end{thm}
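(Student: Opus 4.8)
The plan is to deduce the statement from the classical Banach Fixed-Point Theorem applied on the metric space $(\widetilde B,d_E)$, where $d_E(u,v):=\|u-v\|_E$; we may assume $B\neq\emptyset$, since otherwise there is nothing to prove. First I would note that $\widetilde B$ is, by definition, the closure of $B$ in the Banach space $E$, hence a closed subset of $E$ and therefore a \emph{complete} metric space for $d_E$. (The hypothesis that $F$ is continuously embedded in $E$ is only used here to view $B\subseteq F$ as a subset of $E$, so that its $E$-closure makes sense; it plays no further role in this abstract argument.)

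Next I would build the extension. The contraction estimate shows that $G:(B,d_E)\to(E,d_E)$ is Lipschitz-continuous, hence uniformly continuous. Since $B$ is dense in $\widetilde B$ for $d_E$ and $E$ is complete, the standard extension theorem for uniformly continuous maps produces a unique continuous map $\widetilde G:\widetilde B\to E$ with $\widetilde G|_B=G$. Concretely: for $u\in\widetilde B$ take any $(u_n)\subseteq B$ with $u_n\to u$ in $E$; then $\|G(u_n)-G(u_m)\|_E\le\gamma\|u_n-u_m\|_E$ shows that $(G(u_n))$ is Cauchy in $E$, so it converges, and uniform continuity guarantees the limit does not depend on the chosen sequence; we set $\widetilde G(u)$ equal to this limit. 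Moreover $G(u_n)\in G(B)\subseteq B\subseteq\widetilde B$ and $\widetilde B$ is closed in $E$, so $\widetilde G(u)\in\widetilde B$; hence $\widetilde G$ indeed maps $\widetilde B$ into $\widetilde B$. Uniqueness of such an extension follows at once from the density of $B$ in $\widetilde B$ and the continuity of $\widetilde G$.

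Then I would check that $\widetilde G$ keeps the same contraction constant $\gamma$. For $u,v\in\widetilde B$, pick $(u_n),(v_n)\subseteq B$ with $u_n\to u$ and $v_n\to v$ in $E$; letting $n\to\infty$ in $\|G(u_n)-G(v_n)\|_E\le\gamma\|u_n-v_n\|_E$ and using the continuity of $\widetilde G$ and of the norm $\|\cdot\|_E$ gives $\|\widetilde G(u)-\widetilde G(v)\|_E\le\gamma\|u-v\|_E$ for all $u,v\in\widetilde B$.

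Finally, I would apply the Banach Fixed-Point Theorem to the strict contraction $\widetilde G$ of the nonempty complete metric space $(\widetilde B,d_E)$ into itself, obtaining a unique fixed point of $\widetilde G$ in $\widetilde B$. The only points requiring a little care are the well-definedness of $\widetilde G$ (independence of the approximating sequence — which is exactly uniform continuity) and the fact that the values of $\widetilde G$ remain in $\widetilde B$ rather than merely in $E$; both are settled above using the completeness of $E$ and the closedness of $\widetilde B$, so I do not expect any genuine obstacle here.
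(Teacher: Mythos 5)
Your proof is correct and follows exactly the route the paper intends: the paper gives no proof, presenting the theorem as an immediate consequence of the Banach Fixed-Point Theorem, and your argument (extend the contraction by density to the closed, hence complete, set $\widetilde B$, check the values stay in $\widetilde B$ and the constant $\gamma$ persists, then apply Banach) is the standard justification of that claim.
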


 

 
 \subsection{Well-Posedness of the Viscous Burgers-$\alpha$ System}\label{subsec:wellposedness_viscous}

  	    Let us introduce the Hilbert space 
  	    $E := H^{3/4}(0,T)\times H^{3/4}(0,T)$.
         It is not difficult to check that the trace operator  $\Gamma:L^2(0,T;H^2(0,L))\cap H^1(0,T;L^2(0,L))\mapsto E$, defined by
         $\Gamma(\xi) := (\xi(\cdot,0),\xi(\cdot,L))$ is surjective, see \cite[p.~18]{lions}.
          Furthermore, there exists a linear continuous mapping $S: E \mapsto L^2(0,T;H^2(0,L))\cap H^1(0,T;L^2(0,L))$ such that $\Gamma\circ S = I_{E}$. 
          Thus, for each $(v_l,v_r)\in E$ we can get $\xi\in L^2(0,T;H^2(0,L))\cap H^1(0,T;L^2(0,L))$ such that
        \[
         \|\xi\|_{L^2(H^2)\cap H^1(L^2)} \leq C( \|v_l\|_{H^{3/4}} + \|v_r\|_{H^{3/4}}),
         \]
          for some $C > 0$.

   	 The following result concerns global existence and uniqueness for viscous Burgers-$\alpha$ systems:
    \begin{prop}\label{re10}
 	 Let $\alpha > 0$, $f\in L^{\infty}((0,T)\times (0,L))$, $y_0\in H^1(0,L)$ and $v_l, v_r \in H^{3/4}(0,T)$ be given. Assume that the following compatibility 
	relations hold:
       \[
      	v_l(0) = y_0(0)\quad\mbox{and}\quad v_r(0) = y_0(L).
       \]
       Then, there exists a unique solution $(y^{\alpha},z^{\alpha})$ to the Burgers-$\alpha$ system:
    \begin{equation}\label{mg4}
    \left\{
    \begin{array}{lll}
    y^\alpha_t - y^\alpha_{xx} + z^\alpha y^\alpha_{x} = f &\mbox{in} & (0,T)\times (0,L),\\
    z^\alpha - \alpha^2 z^\alpha_{xx} = y^\alpha &\mbox{in} & (0,T)\times (0,L),\\
    z^\alpha(\cdot,0) = y^\alpha(\cdot,0) = v_l&\mbox{in} & (0,T),\\
    z^\alpha(\cdot,L) = y^\alpha(\cdot,L) = v_r&\mbox{in} & (0,T),\\
    y^\alpha(0,\cdot) = y_0 &\mbox{in} & (0,L).
    \end{array}
    \right.
    \end{equation}       
       with
       \begin{equation}\label{mg6}
       \left\{
       \begin{array}{l}
         y^{\alpha}\in H^1(0,T;L^2(0,L))\cap L^2(0,T;H^2(0,L))\cap C^0([0,T];H^1(0,L)),\\
         \noalign{\smallskip}\displaystyle
        
        z^{\alpha} \in H^1(0,T;L^2(0,L))\cap L^2(0,T;H^4(0,L))\cap C^0([0,T];H^3(0,L)).
        
       \end{array}
       \right.
       \end{equation}
  
       Let us set $M_T := \|y^0\|_{\infty} + \|v_l\|_{\infty} + \|v_r\|_{\infty} + T\|f\|_{\infty}$. Then, the following estimates holds:
     \begin{equation}\label{mg5}
      \begin{alignedat}{2}
             \|y^\alpha\|_{\infty} \leq &~M_T,\quad \|z^{\alpha}\|_{\infty} \leq M_T,\\
             \noalign{\smallskip}\displaystyle
           \!\!\!  \|y^{\alpha}\|_{H^1(L^2)\cap L^2(H^2)} + \|y^{\alpha}\|_{L^{\infty}(H^1)} \leq&~  Ce^{CM_T^2}\left(\|f\|_2 +\|y_0\|_{H^1} + \|v_l\|_{H^{3/4}} + \|v_r\|_{H^{3/4}} \right)\\
      \noalign{\smallskip}\displaystyle
       \|z^{\alpha}\|_2 +\alpha\|z^{\alpha}_x\|_2 + \alpha^2\|z^{\alpha}_{xx}\|_2 \leq &
       ~Ce^{CM_T^2}\!\left[\|f\|_2 + \|y_0\|_{H^1}\! +(1+\alpha^2)( \|(v_l,v_r)\|_{H^{3/4}\times H^{3/4}})\right].
       \end{alignedat}
        \end{equation}
  \end{prop}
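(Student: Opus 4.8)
The plan is to prove Proposition~\ref{re10} by a fixed-point argument built around the linear parabolic equation obtained when the transport velocity is frozen, together with the regularizing map $(Id-\alpha^2\partial_{xx}^2)^{-1}$. First I would reduce to homogeneous boundary data: using the lifting operator $S$ from Subsection~\ref{subsec:wellposedness_viscous}, write $y^\alpha = \xi + w$ where $\xi = S(v_l,v_r)$ and $w$ solves a parabolic problem with homogeneous Dirichlet data, a modified right-hand side in $L^2$, and initial datum $y_0 - \xi(0,\cdot) \in H^1_0(0,L)$ (here the compatibility conditions $v_l(0)=y_0(0)$, $v_r(0)=y_0(L)$ are exactly what is needed). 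It then suffices to solve for $w$.

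Next I would set up the iteration. Given $\bar y \in L^2(0,T;H^1(0,L)) \cap L^\infty((0,T)\times(0,L))$, define $\bar z$ by solving $\bar z - \alpha^2 \bar z_{xx} = \bar y$ with boundary values $v_l, v_r$; by elliptic regularity and the maximum principle for this ODE-in-$x$, one gets $\|\bar z\|_\infty \le \|\bar y\|_\infty + \|v_l\|_\infty + \|v_r\|_\infty$ and the weighted $H^2$ bound $\|\bar z\|_2 + \alpha\|\bar z_x\|_2 + \alpha^2\|\bar z_{xx}\|_2 \le C(\|\bar y\|_2 + (1+\alpha^2)\|(v_l,v_r)\|_{H^{3/4}\times H^{3/4}})$, which is the source of the $\alpha$-dependence in \eqref{mg5}. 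Then solve the linear parabolic equation $y_t - y_{xx} + \bar z\, y_x = f$ with the appropriate data; standard parabolic theory gives a solution in $H^1(L^2)\cap L^2(H^2)\cap C^0(H^1)$, and this defines the map $G:\bar y \mapsto y$. The maximum principle applied to the parabolic equation (using that $\bar z\, y_x$ is a transport term with no zeroth-order part) yields $\|y\|_\infty \le M_T$, so $G$ maps the ball $\{\|\bar y\|_\infty \le M_T\}$ (intersected with a suitable $L^2(H^1)$ bound) into itself. To get contraction I would estimate the difference of two iterates in the weaker norm $L^\infty(L^2)\cap L^2(H^1)$: the difference solves a parabolic equation whose right-hand side involves $(\bar z_1 - \bar z_2) y_{1,x}$ and $\bar z_2 (y_{1}-y_2)_x$, and an energy estimate combined with the elliptic bound $\|\bar z_1 - \bar z_2\|_\infty \lesssim \|\bar y_1 - \bar y_2\|_\infty$ — or better, working on a short time interval — gives a contraction. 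Then Theorem~\ref{fixed-point} furnishes the fixed point, which is the desired solution; uniqueness follows from the same energy estimate applied to two solutions, and a standard bootstrap extends the local-in-time solution to all of $[0,T]$ since the $L^\infty$ bound $M_T$ is global.

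Finally I would collect the estimates \eqref{mg5}: the $L^\infty$ bounds come from the maximum principle as above; the $H^1(L^2)\cap L^2(H^2)\cap L^\infty(H^1)$ bound for $y^\alpha$ comes from a parabolic energy estimate in which the transport term $z^\alpha y^\alpha_x$ is absorbed using $\|z^\alpha\|_\infty \le M_T$ and a Gronwall argument, producing the factor $e^{CM_T^2}$; the weighted elliptic estimate for $z^\alpha$ then follows by testing the equation $z^\alpha - \alpha^2 z^\alpha_{xx} = y^\alpha$ against $z^\alpha$ and against $z^\alpha_{xx}$ after subtracting the lifting of the boundary data. The extra regularity $z^\alpha \in L^2(H^4)\cap C^0(H^3)$ in \eqref{mg6} is read off by differentiating the elliptic equation twice in $x$ and using $y^\alpha \in L^2(H^2)\cap C^0(H^1)$.

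I expect the main obstacle to be the contraction step: the nonlinearity $z^\alpha y^\alpha_x$ loses a derivative relative to $y^\alpha$ (it is first order), so one cannot close the contraction in the natural energy space directly and must either work in a short time interval (then patch) or exploit a gain in integrability from the parabolic smoothing to control $y_{1,x}$ in a space dual to where $\bar z_1 - \bar z_2$ lives; keeping all constants in the contraction independent of the solution (though not of $\alpha$) requires some care. A secondary technical point is tracking the $(1+\alpha^2)$ factor in the boundary-data contribution to the $z$-estimate, which must be displayed honestly since these estimates are \emph{not} claimed uniform in $\alpha$ here (uniformity is recovered later by the controllability construction).
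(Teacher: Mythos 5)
Your overall strategy is sound, but it is genuinely different from the paper's. The paper does not use a contraction at all: it freezes $\bar y$ in the $L^\infty$ ball $K=\{\|\bar y\|_{L^\infty(L^\infty)}\le M_T\}$, solves the elliptic and linear parabolic problems exactly as you do, and then shows that the image $\Lambda_\alpha(K)$ is bounded in $L^\infty(0,T;H^1)$ with time derivatives bounded in $L^2(L^2)$, so that Aubin--Lions--Simon compactness plus Schauder's theorem yields a fixed point directly on $[0,T]$; uniqueness is proved separately by the Gronwall estimate you also mention (using $\widehat y^\alpha_x\in L^2(0,T;L^\infty)$ from $L^2(H^2)\hookrightarrow L^2(C^1)$ and $\|z_1-z_2\|_2\le\|y_1-y_2\|_2$). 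The compactness route buys you: no derivative-loss issue, no short-time/patching argument, and no need for $\alpha$-dependent contraction constants. Your Banach fixed-point route buys constructiveness and merges existence and uniqueness, but it is exactly at the contraction step that your sketch, as literally written, does not close: pairing the elliptic bound $\|\bar z_1-\bar z_2\|_\infty\lesssim\|\bar y_1-\bar y_2\|_\infty$ with an energy estimate mismatches norms, since the energy estimate only controls the output difference in $L^\infty(L^2)\cap L^2(H^1)$ while the input difference is measured in $L^\infty(L^\infty)$, and no smallness of $T$ fixes a norm mismatch by itself.

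Two standard repairs (you hint at both) make it work. Either measure both differences in $C^0([0,\tau];L^2)$ and use the $\alpha$-dependent gain $\|\bar z_1-\bar z_2(t)\|_{\infty}\le C(\alpha)\|\bar y_1-\bar y_2(t)\|_{2}$ (from $\|\bar v\|_{2}+\alpha\|\bar v_x\|_{2}\le \|\bar u\|_2$ and Sobolev embedding), which gives $\|y_1-y_2\|_{C^0([0,\tau];L^2)}\le C(\alpha,\text{data})\sqrt{\tau}\,\|\bar y_1-\bar y_2\|_{C^0([0,\tau];L^2)}$ after Gronwall, a contraction for small $\tau$ (this is legitimate here since uniformity in $\alpha$ is not claimed for the fixed-point scheme); or measure the difference in $L^\infty(L^\infty)$ and control the source $(\bar z_1-\bar z_2)y_{2,x}$ through $y_{2,x}\in L^1(0,\tau;L^\infty)$, which is small for small $\tau$ by the uniform $L^2(H^2)$ bound on the image. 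Your continuation argument is fine because the global bounds ($\|y\|_\infty\le M_T$ via the parabolic maximum principle, hence $\|z\|_\infty\le M_T$, hence the $e^{CM_T^2}$ energy bound on $\|y\|_{L^\infty(H^1)}$) keep the step length uniform, and if you use the paper's Theorem~\ref{fixed-point} instead you must still check a posteriori that the fixed point in the $C^0(L^2)$-closure retains the \eqref{mg6} regularity (weak compactness of the iterates suffices). The derivation of \eqref{mg5} and of the $z^\alpha$ regularity is essentially the paper's, except that the paper obtains $L^2(H^4)\cap C^0(H^3)$ for $z^\alpha$ by subtracting the affine-in-$x$ lifting of $(v_l,v_r)$ rather than by differentiating the elliptic equation, which amounts to the same bootstrap.
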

\begin{proof}
	The proof of existence can be reduced to find a fixed-point of an appropriate mapping~$\Lambda_{\alpha}$. Thus, note first that
	there exists $\xi\in L^2(0,T;H^2(0,L))\cap H^1(0,T;L^2(0,L))$ with
\[
              	\xi(\cdot,0) = v_l\,\,\,\mbox{and} \,\,\,\xi(\cdot,L) = v_r\quad \mbox{in}\quad (0,T).
\]
          Accordingly, for each $\bar{y}\in L^{\infty}(0,T;L^\infty(0,L))$ there exists exactly one $z\in L^\infty(0,T;H^2(0,L))\cap L^{\infty}(0,T;L^\infty(0,L))$ with
         \begin{equation*}
         \left\{
         \begin{array}{lll}
         z - \alpha^2 z_{xx} = \bar{y}&\mbox{in} & (0,T)\times (0,L),\\
         z(\cdot,0) = v_l,\,\,z(\cdot,L) = v_r&\mbox{in} & (0,T),\\
         \end{array}
          \right.
           \end{equation*}
      satisfying:
      \begin{equation*}
     \left.
     \begin{array}{l}
     \|z\|^2_2 + 2\alpha^2\|z_x\|^2_2 + \alpha^4\|z_{xx}\|^2_2 \leq C\left(\|\bar{y}\|^2_{2}
         + \|\xi\|^2_{2} + \alpha^2 \|\xi_x\|^2_{2} + \alpha^4\|\xi_{xx}\|^2_{2}\right),\\
         {\|z\|_{L^{\infty}(L^\infty)} \leq \|\bar{y}\|_{L^{\infty}(L^\infty)} + \|v_l\|_{\infty} + \|v_r\|_{\infty}.}
     \end{array}
     \right.
     \end{equation*}
     
      With this $z$, by applying (for instance) the Faedo-Galerkin method, we can easily prove the existence of a  $y$ to the linear parabolic equation
        \begin{equation}\label{mg99}
       \left\{
       \begin{array}{lll}
        y_t - y_{xx} + zy_x = f &\mbox{in} & (0,T)\times (0,L),\\
        y(\cdot,0) = v_l,\,\,y(\cdot,L) = v_r&\mbox{in} & (0,T),\\
        y(0,\cdot) = y_0 &\mbox{in} & (0,L)
       \end{array}
       \right.
       \end{equation}  
       that satisfies
       \begin{equation*}
       \left.
       \begin{array}{l}
       y\in  H^1(0,T;L^2(0,L)) \cap L^2(0,T;H^2(0,L))\cap C^0([0,T];H^1(0,L))
       \end{array}
       \right.
       \end{equation*}
     and 
     \begin{equation}\label{mg2}
     \|y_t\|_{L^2(L^2)} + \|y\|_{L^2(H^2)} + \|y\|_{L^{\infty}(H^1)} \leq C\left(\|y_0\|_{H^1} + \|f\|_{L^2(L^2)} + \|v_l\|_{H^{3/4}} + \|v_r\|_{H^{3/4}}\right)e^{C\|z\|_{\infty}^2}.
     \end{equation} 
     
     Arguing as in the proof of  \cite[Lemma $1$]{Araruna}, we can deduce that the solution to \eqref{mg99} belongs to the space 
     $C^0([0,T];H^1(0,L))$ and, in particular, $y_{L^{\infty}(L^{\infty})} \leq M_T$. Accordingly, we can introduce the bounded closed convex set 
     \[
     K := \{\bar{y}\in L^{\infty}(0,T;L^\infty(0,L))\,:\,                        \|\bar{y}\|_{L^\infty(L^\infty)}\leq M_T\}                   
     \]
     	and the mapping $\Lambda_{\alpha} : K \mapsto K$, with $\Lambda_{\alpha} (\bar{y}) = y$. Obviously, $\Lambda_{\alpha}$ is well-defined and continuous and,
     	moreover, we can see from the estimates in \eqref{mg2} that $G := \Lambda_{\alpha}(K)$ is bounded in $L^{\infty}(0,T;H^1(0,L))$ and $G_t := \{u_t; u\in G\}$ is bounded in 		
	$L^2(0,T;L^2(0,L))$. From classical results of the Aubin-Lions kind (see \cite{simon}), we deduce that $G$ is relatively compact in $L^\infty(0,T;L^\infty(0,L))$. Therefore, 
	by Schauder's fixed point Theorem, $\Lambda_{\alpha}$ has a fixed point in $K$, which obviously implies the existence of a solution to \eqref{mg4}.
     
     We prove now that the solution is unique. 
     Let $(y^{\alpha},z^{\alpha})$ and $(\widehat{y}^{\alpha},\widehat{z}^{\alpha})$ be two solutions to \eqref{mg4} and let us 
     introduce $ u := y^{\alpha} - \widehat{y}^{\alpha}$ and $v := z^{\alpha} - \widehat{z}^{\alpha}$. Then,
     \begin{equation}\label{mg100}
     \left\{
     \begin{array}{lll}
     u_t - u_{xx} + z^{\alpha}u_x = - v\widehat{y}^{\alpha}_x &\mbox{in}& (0,T)\times (0,L),\\
     v - \alpha^2 v_{xx} = u &\mbox{in}& (0,T)\times (0,L),\\
     u(\cdot, 0) = u(\cdot,L) = v(\cdot,0) = v(\cdot,L) = 0 &\mbox{in}& (0,T),\\
     u(0,\cdot) = 0 &\mbox{in}& (0,L).
     \end{array}
     \right.
     \end{equation}   
    Using the fact that $\widehat{y}^{\alpha}\in L^2(0,T;H^2(0,L))\hookrightarrow L^2(0,T;C^1[0,L])$ and multiplying the first equation of the system above by $u$, we get:
    \[
    \left.
    \begin{alignedat}{2}
    \dfrac{1}{2}\dfrac{d}{dt}\|u\|_2^2 + \|u_x\|_2^2 \leq&~ \|z^{\alpha}\|_{\infty}\|u_x\|_2\|u\|_2 + \|\widehat{y}^{\alpha}_x\|_{\infty}\|v\|_2\|u\|_2\\
                                                     \leq &~ \dfrac{1}{2}\|u_x\|_2^2 + \dfrac{\|z^{\alpha}\|_{\infty}^2 }{2} \|u\|_2^2+ \|\widehat{y}^{\alpha}_x\|_{\infty}\|u\|_2^2.
    \end{alignedat}
    \right.
    \]      
    Therefore,
    \[
    \dfrac{d}{dt}\|u\|_2^2 + \|u_x\|_2^2 \leq \left( \|z^{\alpha}\|_{\infty}^2 + 2\|\widehat{y}^{\alpha}_x\|_{\infty}\right)\|u\|_2^2.
    \]
    Since $u(0,\cdot) = 0$, Gronwall's Lemma implies $u \equiv 0$ and, consequently, $v \equiv 0$.
    
    Finally, let us check that $z^{\alpha}$ satisfies the regularity properties in \eqref{mg6}. To get this, let us introduce the function given by
    $$
    h(t,x):={v_l(t)(L-x)+x\,v_r(t)\over L}.
    $$
    Then, we obtain from \eqref{mg4} that $z^\alpha=w^\alpha+h$, where
    $w^\alpha$ solves
        \begin{equation*}
    \left\{
    \begin{array}{lll}
    w^\alpha - \alpha^2 w^\alpha_{xx} = y^\alpha-h &\mbox{in} & (0,T)\times (0,L),\\
    w^\alpha(\cdot,0) = w^\alpha(\cdot,0) = 0&\mbox{in} & (0,T).
    \end{array}
    \right.
    \end{equation*}  
    Consequently, $w^\alpha\in L^2(0,T;H^4(0,L)\cap H^1_0(0,L))\cap C^0([0,T];H^3(0,L)\cap H^1_0(0,L))$ and 
    the estimates are uniform, with respect to $\alpha$, in the space $L^2(0,T;H^2(0,L)\cap H^1_0(0,L))\cap C^0([0,T];H^1_0(0,L))$.
  \end{proof}
  
  \
  
   	Now, let us present a result concerning global existence and uniqueness of a (weak) solution with initial conditions in $L^\infty(0,L)$:
 \begin{prop}\label{re11} 
 	Let $\alpha > 0$, $f\in L^{\infty}((0,T)\times (0,L))$, $y_0\in L^\infty(0,L)$ and $v_l, v_r \in H^{3/4}(0,T)$ be given.
       Then, there exists a unique solution $(y^{\alpha},z^{\alpha})$ to the Burgers-$\alpha$ system:
    \begin{equation}\label{mg44}
    \left\{
    \begin{array}{lll}
    y^\alpha_t - y^\alpha_{xx} + z^\alpha y^\alpha_{x} = f &\mbox{in} & (0,T)\times (0,L),\\
    z^\alpha - \alpha^2 z^\alpha_{xx} = y^\alpha &\mbox{in} & (0,T)\times (0,L),\\
    z^\alpha(\cdot,0) = y^\alpha(\cdot,0) = v_l&\mbox{in} & (0,T),\\
    z^\alpha(\cdot,L) = y^\alpha(\cdot,L) = v_r &\mbox{in} & (0,T),\\
    y^\alpha(0,\cdot) = y_0 &\mbox{in} & (0,L)
    \end{array}
    \right.
    \end{equation}       
        with
       \begin{equation}\label{mg66}
       \left\{
       \begin{array}{l}
         y^{\alpha}\in H^1(0,T;H^{-1}(0,L))\cap L^2(0,T;H^1_0(0,L))\cap  C^0([0,T];L^2(0,L)) \cap L^\infty(0,T;L^\infty(0,L)),\\
         \noalign{\smallskip}\displaystyle
        z^{\alpha} \in H^1(0,T;H^{-1}(0,L))\cap L^2(0,T;H^3(0,L))\cap C^0([0,T];H^2(0,L)).  
       \end{array}
       \right.
       \end{equation}
       Let us set $M_T := \|y^0\|_{\infty} + \|v_l\|_{\infty} + \|v_r\|_{\infty} + T\|f\|_{\infty}$. Then, the following estimates holds:
     \begin{equation}\label{mg55}
      \begin{alignedat}{2}
             \|y^\alpha\|_{\infty} \leq &~M_T, \quad \|z^{\alpha}\|_{\infty} \leq  M_T,\\
             \noalign{\smallskip}\displaystyle
    \!\!\!\!\!  \|y^{\alpha}\|_{H^1(H^{-1})\cap L^2(H^1)} + \|y^{\alpha}\|_{L^{\infty}(L^2)} \leq&~  Ce^{CM_T^2}\left(\|f\|_2 +\|y_0\|_{2} + \|v_l\|_{H^{3/4}} + \|v_r\|_{H^{3/4}} \right),\\
      \noalign{\smallskip}\displaystyle
       \!\!\!\!\!\|z^{\alpha}\|_2 +\alpha\|z^{\alpha}_x\|_2 + \alpha^2\|z^{\alpha}_{xx}\|_2 \leq &
       ~Ce^{CM_T^2}\!\left[\|f\|_2 + \|y_0\|_{2}\! +(1+\alpha^2)( \|(v_l,v_r)\|_{H^{3/4}\times H^{3/4}})\right].
       \end{alignedat}
        \end{equation}
       \end{prop}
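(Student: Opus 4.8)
The plan is to reduce Proposition~\ref{re11} to Proposition~\ref{re10} by a density/compactness argument in the initial datum. Given $y_0\in L^\infty(0,L)$, I would first approximate it by a sequence $y_0^k\in H^1(0,L)$ with $y_0^k\to y_0$ in $L^2(0,L)$ and, crucially, $\|y_0^k\|_\infty\le \|y_0\|_\infty$ for all $k$ (this can be arranged by truncating and mollifying a smooth approximation, then composing with the cut-off $s\mapsto \max(-\|y_0\|_\infty,\min(s,\|y_0\|_\infty))$). To match the compatibility conditions required by Proposition~\ref{re10}, I would also replace $v_l,v_r$ by sequences $v_l^k,v_r^k\in H^{3/4}(0,T)$ with $v_l^k(0)=y_0^k(0)$, $v_r^k(0)=y_0^k(L)$, converging to $v_l,v_r$ in $H^{3/4}(0,T)$, and with uniformly bounded $L^\infty(0,T)$ norms. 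For each $k$, Proposition~\ref{re10} yields a unique solution $(y^{\alpha,k},z^{\alpha,k})$ with the regularity \eqref{mg6} and the bounds \eqref{mg5}.

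Next I would pass to the limit $k\to\infty$. The key point is that $M_T^k:=\|y_0^k\|_\infty+\|v_l^k\|_\infty+\|v_r^k\|_\infty+T\|f\|_\infty$ stays bounded (indeed converges to $M_T$, or is dominated by it up to an arbitrarily small error), so the right-hand sides of \eqref{mg5} are bounded uniformly in $k$. This gives $y^{\alpha,k}$ bounded in $H^1(0,T;L^2)\cap L^2(0,T;H^2)\cap L^\infty((0,T)\times(0,L))$; however, these are not the norms we want to keep in the limit, because the bound in \eqref{mg5} involves $\|y_0^k\|_{H^1}$, which blows up. Instead I would re-derive, \emph{on the approximations}, the a~priori estimates in the weaker topology: multiplying the $y$-equation by $y^{\alpha,k}$ (after subtracting a lifting of the boundary data, as in the proof of Proposition~\ref{re10}) gives a bound in $L^2(0,T;H^1_0)\cap L^\infty(0,T;L^2)$ depending only on $\|f\|_2$, $\|y_0^k\|_2$, $\|(v_l^k,v_r^k)\|_{H^{3/4}}$ and $M_T^k$, hence uniform; testing the equation against $H^1_0$ functions then bounds $y^{\alpha,k}_t$ in $L^2(0,T;H^{-1})$. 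For $z^{\alpha,k}$, the elliptic equation $z^{\alpha,k}-\alpha^2 z^{\alpha,k}_{xx}=y^{\alpha,k}$ with the lifting trick used in Proposition~\ref{re10} gives the stated bounds on $\|z^{\alpha,k}\|_2+\alpha\|z^{\alpha,k}_x\|_2+\alpha^2\|z^{\alpha,k}_{xx}\|_2$ and, since $y^{\alpha,k}\in L^2(0,T;H^1)$, also $z^{\alpha,k}\in L^2(0,T;H^3)\cap C^0([0,T];H^2)$. By Aubin--Lions ($H^1_0\hookrightarrow\hookrightarrow L^2\hookrightarrow H^{-1}$, applied to $y^{\alpha,k}$, and an analogous embedding for $z^{\alpha,k}$) I extract a subsequence with $y^{\alpha,k}\to y^\alpha$ strongly in $L^2((0,T)\times(0,L))$ and weak-$\star$ in $L^\infty$, $z^{\alpha,k}\to z^\alpha$ strongly in $L^2(0,T;H^1)$, etc. Strong $L^2$ convergence of $y^{\alpha,k}$ together with $L^\infty$-boundedness lets me pass to the limit in the nonlinear term $z^{\alpha,k}y^{\alpha,k}_x$ (write it as $(z^{\alpha,k}y^{\alpha,k})_x-z^{\alpha,k}_x y^{\alpha,k}$, or simply use $z^{\alpha,k}\to z^\alpha$ in $L^2(0,T;C^0)$ and $y^{\alpha,k}_x\rightharpoonup y^\alpha_x$ weakly in $L^2$), so $(y^\alpha,z^\alpha)$ solves \eqref{mg44}; the boundary and initial conditions survive by continuity of the trace operators; and the estimates \eqref{mg55} follow from weak/weak-$\star$ lower semicontinuity of the norms plus $M_T^k\to M_T$.

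For uniqueness I would repeat verbatim the argument at the end of the proof of Proposition~\ref{re10}: given two solutions, set $u=y^\alpha-\widehat y^\alpha$, $v=z^\alpha-\widehat z^\alpha$; they satisfy a system analogous to \eqref{mg100}, now with only $u\in L^2(0,T;H^1_0)\cap L^\infty(0,T;L^2)$. The one delicate point is that the term $v\,\widehat y^\alpha_x$ can no longer be controlled through $\|\widehat y^\alpha_x\|_\infty$, since $\widehat y^\alpha$ is merely in $L^2(0,T;H^1_0)$. I would instead estimate, after multiplying by $u$, the term $\int_0^L v\,\widehat y^\alpha_x\,u\,dx$ by integrating by parts to move the derivative onto $v$ and $u$: $\bigl|\int_0^L v\,\widehat y^\alpha_x\,u\bigr|\le \|\widehat y^\alpha\|_2\,\|(vu)_x\|_2\le \|\widehat y^\alpha\|_2(\|v_x\|_2\|u\|_\infty+\|v\|_\infty\|u_x\|_2)$, then use $v-\alpha^2 v_{xx}=u$ to get $\|v_x\|_2,\|v\|_\infty\le C_\alpha\|u\|_2$ and the 1D interpolation $\|u\|_\infty\le C\|u\|_2^{1/2}\|u_x\|_2^{1/2}$, absorbing $\|u_x\|_2^2$ into the diffusion term and closing a Gronwall inequality (the constants depend on $\alpha$ and on $\|\widehat y^\alpha\|_{L^2(H^1)}$, which is fine). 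The main obstacle in the whole proof is precisely this: keeping all a~priori bounds in the correct weak-solution topology (so that the $\|y_0\|_{H^1}$-dependent estimate of Proposition~\ref{re10} is \emph{not} used in the limit) and handling the low-regularity nonlinear term both in the limit passage and in the uniqueness argument.
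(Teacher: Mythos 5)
Your overall strategy -- regularize the data, invoke Proposition~\ref{re10} for each approximation, re-derive the a priori bounds in the weak topology so that the $\|y_0\|_{H^1}$-dependent estimate of \eqref{mg5} is never used, then Aubin--Lions compactness and a limit passage -- is a legitimate alternative to the paper's argument, which is genuinely different: the paper constructs the weak solution \emph{directly}, with no approximation of the data, by running a Schauder fixed-point for the map $\bar y\mapsto y$ on the ball $\{\|\bar y\|_{L^2(L^\infty)}\le T^{1/2}M_T\}$, obtaining exactly the estimates \eqref{mg55} from the linear theory, and getting the $L^\infty$ bound on $z^\alpha$ a posteriori by the maximum principle. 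Uniqueness in the paper follows the same Gronwall scheme as yours but with the simpler bound $\|v\|_{\infty}\le C\alpha^{-1}\|u\|_2$ coming straight from the elliptic equation $v-\alpha^2v_{xx}=u$ (coefficient $\|\widehat y^\alpha_x(t,\cdot)\|_2/\alpha$, integrable in $t$), so your integration by parts plus interpolation, while correct, is more elaborate than necessary. Your route buys inheritance of existence from the strong theory; the paper's avoids the data-approximation step altogether.

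There is one step in your proposal that fails as written: the construction of compatible boundary data with $v_l^k(0)=y_0^k(0)$, $v_r^k(0)=y_0^k(L)$ \emph{and} $v_l^k\to v_l$, $v_r^k\to v_r$ in $H^{3/4}(0,T)$. Since $H^{3/4}(0,T)\hookrightarrow C^0([0,T])$, convergence in $H^{3/4}$ forces $v_l^k(0)\to v_l(0)$, whereas $y_0^k(0)$ (the value at $0$ of a truncated/mollified $L^\infty$ function) has no reason to approach $v_l(0)$; and if you instead try to reconcile the endpoint values by a correction supported in $[0,1/k]$, its $H^{3/4}(0,T)$ norm blows up (like $k^{1/4}$ by scaling), destroying the uniform bounds. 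The repair is easy and preserves your architecture: leave $v_l,v_r$ untouched and modify $y_0^k$ on shrinking neighborhoods of $x=0$ and $x=L$ so that $y_0^k(0)=v_l(0)$ and $y_0^k(L)=v_r(0)$. This keeps $y_0^k\to y_0$ in $L^2(0,L)$ and $\|y_0^k\|_\infty\le\max\{\|y_0\|_\infty,\|v_l\|_\infty,\|v_r\|_\infty\}$, so $M_T^k$ remains uniformly bounded, and the limit passage in the boundary conditions is immediate because all approximate solutions carry the very same traces $v_l,v_r$. The possibly degraded constant in the $L^\infty$ bound is then recovered a posteriori: the limit $(y^\alpha,z^\alpha)$ solves \eqref{mg44} with the original data, and the maximum principle yields $\|y^\alpha\|_\infty\le M_T$ and $\|z^\alpha\|_\infty\le M_T$, exactly as the paper argues for $z^\alpha$. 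With this correction, the remaining ingredients of your proof (uniform weak estimates via the boundary lifting, Aubin--Lions, strong convergence of $z^{\alpha,k}$ in $L^2(0,T;H^2(0,L))$ for fixed $\alpha$ to pass to the limit in $z^{\alpha,k}y^{\alpha,k}_x$) are sound.
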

  
\begin{proof}
         For any $\bar{y}\in L^2(0,T;L^\infty(0,L))$, there exists a unique solution to the elliptic problem
         \begin{equation*}
         \left\{
         \begin{array}{lll}
         z - \alpha^2 z_{xx} = \bar{y}&\mbox{in} & (0,T)\times (0,L),\\
         z(\cdot,0) = v_l,\,\,z(\cdot,L) = v_r&\mbox{in} & (0,T),\\
         \end{array}
          \right.
           \end{equation*}
      furthermore satisfying
      \begin{equation*}
     \left.
     \begin{array}{l}
     \|z\|^2_2 + 2\alpha^2\|z_x\|^2_2 + \alpha^4\|z_{xx}\|^2_2 \leq C\left(\|\bar{y}\|^2_{2}
         + \|\xi\|^2_{2} + \alpha^2 \|\xi_x\|^2_{2} + \alpha^4\|\xi_{xx}\|^2_{2}\right),\\
         \|z\|_{L^{2}(L^\infty)} \leq \|\bar{y}\|_{L^{2}(L^\infty)} + \|v_l\|_{2} + \|v_r\|_{2}.
 \end{array}
     \right.
     \end{equation*}
     
      With this $z$, we solve the linear problem
        \begin{equation}\label{mg1}
       \left\{
       \begin{array}{lll}
        y_t - y_{xx} + zy_x = f &\mbox{in} & (0,T)\times (0,L),\\
        y(\cdot,0) = v_l,\,\,y(\cdot,L) = v_r&\mbox{in} & (0,T),\\
        y(0,\cdot) = y_0 &\mbox{in} & (0,L)
       \end{array}
       \right.
       \end{equation}  
       and we find a solution $y$ that satisfies
     \begin{equation*}\label{mg222}
     \!\|y_t\|_{L^2(H^{-1})} + \|y\|_{L^2(H^1_0)} + \|y\|_{L^{\infty}(L^2)} \leq C\left(\|y_0\|_{2} + \|f\|_2 + \|v_l\|_{H^{3/4}} + \|v_r\|_{H^{3/4}}\right)e^{C\|z\|_{L^2(L^\infty)}^2}.
     \end{equation*} 
     
     Again, as in the proof of  \cite[Lemma $1$]{Araruna}, we can deduce that the solution to \eqref{mg1} satisfies 
      $$\|y\|_{L^2(L^\infty)}\leq T^{1/2}M_T.$$
       Let us introduce the set 
       \[
        K := \{\bar{y}\in L^{2}(0,T;L^\infty(0,L))\,:\,                        \|\bar{y}\|_{L^2(L^\infty)}\leq T^{1/2}M_T\}                   
     \]
     	and the mapping $\Lambda_{\alpha} : K \mapsto K$ with $\Lambda_{\alpha} (\bar{y}) = y$. Then, arguing as in the proof of Proposition~\ref{re10}, it is not difficult to prove that $\Lambda_{\alpha}$ possesses a fixed-point in $K$.
     	
     Finally, in order to prove uniqueness, we consider to solutions $ u := y^{\alpha} - \widehat{y}^{\alpha}$ and $v := z^{\alpha} - \widehat{z}^{\alpha}$ and we get \eqref{mg100}. Then, multiplying the first equation of \eqref{mg100} by $u$, we easily get the differential inequality
     
    \[
    \dfrac{d}{dt}\|u(t,\cdot)\|_2^2 + \|u_x(t,\cdot)\|_2^2 \leq \left( \|z^{\alpha}(t,\cdot)\|_{\infty}^2 + {2C\|\widehat{y}^{\alpha}_x(t,\cdot)\|_{2}\over \alpha}\right)\|u(t,\cdot)\|_2^2.
    \]
    Since $u(0,\cdot) \equiv 0$, Gronwall's Lemma implies $u \equiv 0$ and, consequently, $v \equiv 0$. 
    
    Let $(y^\alpha,z^\alpha)$ be the solution to \eqref{mg44}. From \eqref{mg44} and the fact that $y\in L^\infty(0,T;L^\infty(0,L))$, the maximum principle implies that
    $$
    \|z^\alpha\|_{L^\infty(L^\infty)}\leq \|y^\alpha\|_{L^\infty(L^\infty)}\leq M_T.
    $$
    
    This ends the proof.
  \end{proof}

\section{Controllability of the inviscid Burgers-$\alpha$ system} \label{sec:inviscid}

	In this section we present a proof of the global exact controllability property of the inviscid Burgers-$\alpha$ system. We split the proof in two parts: $(i)$ a local null controllability result; $(ii)$ an argument based on a time scale-invariance and reversibility in time that leads to the desired global result.


\subsection{Local null controllability}

 We have the following result:	
\begin{prop}\label{es17}
	Let $T,L, \alpha > 0$ be given. Then, there exist $\delta > 0$ and $C > 0$ (both independent of $\alpha$) such that the following property holds: 
	for each $y_0\in C^1([0,L])$ with $\|y_0\|_{C^1([0,L])}\leq\delta$, there exist  $p^{\alpha} \in C^0([0,T])$ with $p^\alpha(T)=0$, $v_l^{\alpha}, v_r^{\alpha} \in C^1([0,T])$
	and associated states $(y^{\alpha},z^{\alpha}) \in C^1([0,T]\times [0,L];\mathbb{R}^2)$ satisfying \eqref{es14},
	\begin{equation*}\label{es15}
		y^{\alpha}(T,\cdot) = z^{\alpha}(T,\cdot)= 0 \quad\mbox{in} \quad (0,L)
	\end{equation*}
	 and
\begin{equation*}\label{v-unif}
	\|p^{\alpha}\|_{C^0([0,T])}+\|(v_l^{\alpha},v_r^{\alpha})\|_{C^1([0,T];\mathbb{R}^2)}\leq C \quad \forall \alpha>0.
\end{equation*}
\end{prop}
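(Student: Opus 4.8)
The plan is to use the return method. I would first exhibit a space-independent exact trajectory of \eqref{es14} whose associated flow sweeps all of $[0,L]$, and then solve the controlled problem around it by a fixed point, using that a transport velocity which sweeps the interval pushes the interior data out through the lateral boundary before time $T$. Concretely, set $c_0:=y_0(0)$ (so $|c_0|\le\delta$) and choose $\bar Y\in C^1([0,T])$ with $\bar Y(0)=c_0$, $\bar Y(T)=\bar Y'(T)=0$, $\bar Y$ bounded below by a fixed positive constant on some $[t_0,t_1]$ with $0<t_0<t_1<T$, and $\int_0^T\bar Y\,dt>2L$; this is possible with $\|\bar Y\|_{C^1([0,T])}\le C(T,L)$, and $\bar Y$ depends only on $T$, $L$, $y_0(0)$. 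Then $(y,z)=(\bar Y(t),\bar Y(t))$ with controls $p=\bar Y'$, $v_l=v_r=\bar Y$ solves \eqref{es14} exactly. I look for the controlled solution as $y=\bar Y+u$, $z=\bar Y+w$, keeping $p^\alpha:=\bar Y'$ and taking $v_l^\alpha:=\bar Y+g$, where $g\in C^1([0,T])$ is a small correction with $\|g\|_{C^1}\le C\delta^2$, $g(0)=0$, $g'(0)=-y_0(0)y_0'(0)$ (forced by the corner compatibility of \eqref{es14} at $(0,0)$) and $g\equiv0$ on $[t_0,T]$; the last control $v_r^\alpha:=(\bar Y+u)(\cdot,L)$ is recovered a posteriori from the outflow trace. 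The pair $(u,w)$ must then solve
\[
\left\{
\begin{array}{ll}
u_t+(\bar Y+w)u_x=0,\quad w-\alpha^2w_{xx}=u & \mbox{in }(0,T)\times(0,L),\\
u(\cdot,0)=w(\cdot,0)=g,\quad w(\cdot,L)=u(\cdot,L) & \mbox{in }(0,T),\\
u(0,\cdot)=y_0-c_0 & \mbox{in }(0,L),
\end{array}
\right.
\]
and $u(0,0)=0=g(0)$, so the corner is compatible.

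For $\delta$ small I would solve this system by a fixed point in the convex set $B_\rho$ of functions $\bar u\in C^1([0,T]\times[0,L])$ with $\|\bar u\|_{C^1}\le\rho:=C_0\delta$, $\bar u(\cdot,0)=g$ and $\bar u(0,\cdot)=y_0-c_0$. Given $\bar u\in B_\rho$, solve the elliptic problem for $w=w[\bar u]$ with $w(\cdot,0)=g$, $w(\cdot,L)=\bar u(\cdot,L)$; the maximum principle gives $\|w\|_\infty\le\|\bar u\|_\infty$ uniformly in $\alpha$, and since $w-\bar u$ solves $(w-\bar u)-\alpha^2(w-\bar u)_{xx}=\alpha^2\bar u_{xx}$ with vanishing boundary traces, testing against $w-\bar u$ and interpolating yields $\|w-\bar u\|_{L^\infty}\le C\alpha^{1/2}\|\bar u\|_{C^1}$, so $w$ is Lipschitz in $x$ with $w_x\in C^0$ for each fixed $\alpha$. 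By Proposition~\ref{flow4} the velocity $\bar Y+w$ has a $C^1$ flow; solving the transport equation along its characteristics gives $u=G(\bar u)$. Estimating $u$ and its derivatives along the characteristics (in the spirit of Proposition~\ref{es28}) — and noting that $w_x$ is large only inside $O(\alpha)$-thin layers, which a characteristic crosses in time $O(\alpha)$, so that $\int|w_x|$ along any characteristic is $\alpha$-independent — one obtains $G:B_\rho\to B_\rho$ for $\delta$ small. If $u_i=G(\bar u_i)$, $w_i=w[\bar u_i]$, then $u_1-u_2$ solves a transport equation with source $-(w_1-w_2)\partial_x u_2$ and zero initial and inflow data, so $\|u_1-u_2\|_\infty\le T\|w_1-w_2\|_\infty\|\partial_x u_2\|_\infty\le CT\delta\,\|\bar u_1-\bar u_2\|_\infty$: a contraction in $C^0$. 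Theorem~\ref{fixed-point} (with $E=C^0$, $F=C^1$) then gives a fixed point $u$, which — being a solution of a transport equation with $C^1$ data and $C^1$-in-$x$ velocity — lies in $B_\rho$; with $z:=\bar Y+w[u]$ this is a solution of \eqref{es14} with the stated regularity, with $p^\alpha(T)=\bar Y'(T)=0$, and with $(y^\alpha,z^\alpha,p^\alpha,v_l^\alpha,v_r^\alpha)$ bounded by $C(T,L)$ uniformly in $\alpha$.

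To see $y^\alpha(T,\cdot)=0$: since $w(\cdot,0)=g$ is small and $\bar Y$ is nonnegative on a neighborhood of $[t_0,t_1]$, the velocity $\bar Y+w$ is $\ge0$ at $x=0$ there, so no characteristic leaves $(0,L)$ through the left endpoint except possibly near $t=0$; and a characteristic staying in $[0,L]$ for all of $[0,T]$ would have displacement $\int_0^T(\bar Y+w)\,dt\ge\int_0^T\bar Y\,dt-O(\delta)T>L$, which is impossible. Hence every characteristic issued from $[0,L]\times\{0\}$ leaves through $\{L\}\times(0,T)$ before $t=T$; so for $x\in(0,L)$ the value $u(T,x)$ is inherited from the left boundary datum $g$ at an entry time which, by the same displacement estimate, lies in $[t_0,T]$, where $g\equiv0$. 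Therefore $u(T,\cdot)\equiv0$, i.e. $y^\alpha(T,\cdot)=\bar Y(T)=0$, and then $z^\alpha(T,\cdot)$ solves $z^\alpha(T,\cdot)-\alpha^2z^\alpha_{xx}(T,\cdot)=0$ with zero boundary values, so $z^\alpha(T,\cdot)=0$ as well.

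The main obstacle is the uniformity in $\alpha$: the filtered velocity $z^\alpha=\bar Y+w$ is not bounded in $C^1$ uniformly in $\alpha$ (it may carry $\alpha$-thin layers near $x=L$ and in the interior), so the flux theory of Proposition~\ref{flow4} must be combined with $\alpha$-weighted elliptic estimates of the kind used in Proposition~\ref{re10} to show that the quantities that actually enter the argument — the $C^1$ norm of $(y^\alpha,z^\alpha)$ and the contraction constant of $G$ — remain $\alpha$-independent; this is where most of the work lies. A secondary, technical difficulty is the bookkeeping at the lateral endpoint $x=L$ near $t=0$ and $t=T$ (and at $x=0$ near $t=0$ when $y_0(0)<0$), where $\bar Y$ is small and the sign of $\bar Y+w$ at the endpoint is not controlled a priori, so that the endpoint may momentarily act as an inflow boundary; this is absorbed by keeping the sweeping interval $[t_0,t_1]$ compactly inside $(0,T)$ and by prescribing, on the small part of $\{L\}\times(0,T)$ where the velocity is negative, data compatible with $u(T,\cdot)\equiv0$ and with the $C^1$ regularity of $v_r^\alpha$.
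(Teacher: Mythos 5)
Your overall strategy (return method around a space-independent trajectory, transport along characteristics, a fixed point that contracts in $C^0$ on a $C^1$-ball, in the spirit of Theorem~\ref{fixed-point}) is the same as the paper's, but the way you set up the fixed-point map has a genuine gap. You pose the transport problem on the bounded interval $(0,L)$ with data prescribed only at $x=0$ and recover $v_r^\alpha$ ``a posteriori from the outflow trace''. This formulation is only well-posed when $x=0$ is inflow and $x=L$ is outflow for the velocity $\bar Y+w$, and you yourself note that near $t=0$ and $t=T$ (and whenever $y_0(0)<0$) the sign of the velocity at the endpoints is not controlled: the endpoints can switch character, and the set where they do depends on $w$, i.e.\ on the unknown through the filter and the iteration. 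On an inflow portion of $\{L\}\times(0,T)$ the problem is underdetermined, and on an outflow portion of $\{0\}\times(0,T)$ your prescription $u(\cdot,0)=g$ is overdetermined; ``prescribing data compatible with $u(T,\cdot)\equiv 0$ and with $C^1$ regularity'' on an iteration-dependent boundary set is not a construction, and the corner/compatibility conditions needed for $C^1$ regularity up to these switching points are not addressed. This is not a secondary bookkeeping issue: without it the map $G$ is not defined, so neither the invariance $G:B_\rho\to B_\rho$ nor the contraction estimate can be run. The paper avoids the issue entirely by extending both the initial datum and the filtered velocity from $[0,L]$ to all of $\mathbb{R}$ (higher-order reflection plus cutoff), solving the pure Cauchy problem on $\mathbb{R}$ via the flow $\phi^*$, and only afterwards reading off the boundary controls as traces; no inflow/outflow analysis is ever needed.

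The second gap is the uniformity in $\alpha$, which you flag as ``where most of the work lies'' but only treat heuristically ($O(\alpha)$-thin layers crossed in time $O(\alpha)$). In fact, with the boundary conditions you impose (the filter keeps the boundary values of $\bar u$, as in \eqref{sp3}), there are no boundary layers at all: the maximum principle gives $\|w\|_{C^0}\le\|\bar u\|_{C^0}$ and $\|w_x\|_{L^\infty}\le\|\bar u_x\|_{L^\infty}$ uniformly in $\alpha$, which is exactly how the paper obtains the $\alpha$-independent Lipschitz bound \eqref{sp21}--\eqref{sp141} needed for the flow estimates and the constant in Claim~\ref{es29}. So your stated ``main obstacle'' is a misdiagnosis, and the estimate that actually carries the uniformity is asserted rather than proved in your write-up. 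Your sweeping argument at time $T$ also needs the quantitative hypothesis $\int_{t_0}^{T}\bar Y\,dt>L+O(\delta)T$ (not just $\int_0^T\bar Y>2L$) to force the backward characteristics from $(T,x)$ to enter after $t_0$, where $g\equiv 0$; this is arrangeable but must be stated.
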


	The proof is easy by applying the return method, see \cite{Chapouly, Coron, Coron1,Glass}. It relies on a linearization process in combination with a fixed-point argument: 
	$(i)$ first, we need to find a ``good" trajectory (a particular solution for the nonlinear system) steering $0$ to $0$ such that the linearization around it is controllable; 
	$(ii)$ then, we must recover (for instance by a fixed-point argument) the exact controllability result, at least locally, for the nonlinear system.
	
	In our case, it is not difficult to verify that the linearization around zero is not controllable. Accordingly, we  build an appropriate nontrivial trajectory connecting 
	$(0,0)$ to $(0,0)$.
	
	To this purpose, let us introduce the set 
           \begin{equation*}
            \Lambda_{L,T,k}:=\left\{\lambda\in C^k([0,T];[0,\infty))\,:\, \|\lambda\|_{L^1(0,T)} > L\right\}.
            \end{equation*}
		
	Let us consider the couple $(\widehat{y}(x,t),\widehat{z}(x,t)):=(\lambda(t),\lambda(t))$ and the triplet $(\widehat{p}(t),\widehat{v}_l(t),\widehat{v}_r(t)) := (\lambda'(t),\lambda(t),\lambda(t))$, 
	with $\lambda\in  \Lambda_{L,T,1}$ and $\mbox{supp}\,\, \lambda\subset (0,T)$. Note that $(\widehat{y},\widehat{z})$ is a particular solution to \eqref{es14}, associated to the control 
	$(\widehat{p},\widehat{v}_l,\widehat{v}_r)$. We have the following general controllability result:	
       \begin{prop}\label{sp}
	Let $T ,\,L > 0$ be given and assume that $\lambda\in  \Lambda_{L,T,0}$. Then, for any $\alpha>0$ and any $y_0\in C^1([0,L])$, there exists $(y,z)\in C^1([0,T]\times [0,L];\mathbb{R}^2)$ such that
          \begin{equation}\label{eq:chapouly}
          \left\{
          \begin{array}{lcl}
          y_t + \lambda(t) y_x = 0								&\mbox{in}&(0,T)\times (0,L),\\
         \noalign{\smallskip}\displaystyle
          z -\alpha^2 z_{xx} = y						&\mbox{in}& (0,T)\times (0,L),\\
        \noalign{\smallskip}\displaystyle
          z(\cdot,0) = y(\cdot,0),\quad z(\cdot,L) = y(\cdot,L) 	&\mbox{in}& (0,T),\\  
        \noalign{\smallskip}\displaystyle
          y(0,\cdot) = y_0							&\mbox{in}& (0,L),\\
         \noalign{\smallskip}\displaystyle
          y(T,\cdot) = 0								&\mbox{in}&(0,L).
         \end{array}
         \right.
         \end{equation} 
          \end{prop}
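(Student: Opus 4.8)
The plan is to solve the system \eqref{eq:chapouly} essentially by hand, exploiting the fact that the first equation is a \emph{linear} transport equation with a space-independent velocity $\lambda(t)$, so that its characteristics are explicit. First I would introduce the time-primitive $\Lambda(t):=\int_0^t\lambda(s)\,ds$; since $\lambda\in\Lambda_{L,T,0}$ we have $\Lambda$ nondecreasing, $\Lambda(0)=0$ and $\Lambda(T)=\|\lambda\|_{L^1(0,T)}>L$. The characteristics of $y_t+\lambda(t)y_x=0$ passing through a point are the curves $x-\Lambda(t)=\text{const}$, so formally $y(t,x)=y_0(x-\Lambda(t))$ along characteristics that originate on the initial line $\{t=0\}$. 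The catch is that $y_0$ is only defined on $[0,L]$, while characteristics issued from $(0,x_0)$, $x_0\in[0,L]$, sweep out the region $\{(t,x):\Lambda(t)\le x\le L+\Lambda(t)\}$, which need \emph{not} cover all of $[0,T]\times[0,L]$; conversely, for points not reached from $t=0$ we are free to prescribe the value of $y$, and this is exactly where the boundary ``controls'' $z(\cdot,0)=y(\cdot,0)$, $z(\cdot,L)=y(\cdot,L)$ come in. Because $\|\lambda\|_{L^1}>L$, there is a time $T_0<T$ with $\Lambda(T_0)=L$; for $t\ge T_0$ every point $x\in[0,L]$ has $x\le L\le\Lambda(t)$, hence lies strictly to the left of all characteristics emanating from $\{t=0,\ x_0\ge0\}$, so $y$ on $\{t\ge T_0\}$ is entirely determined by data fed in through the left boundary $x=0$ on the time interval $(0,t)$. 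In particular we may simply \emph{choose} $y\equiv0$ on a neighborhood of $\{t=T\}$ — more precisely prescribe $y(t,0)=0$ for $t$ near $T$ — which forces $y(T,\cdot)=0$.

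Concretely, I would define $y$ by the method of characteristics with a carefully chosen ``boundary datum'' at $x=0$: let $y(t,x):=y_0(x-\Lambda(t))$ when $x\ge\Lambda(t)$, and $y(t,x):=\varphi(t-\Lambda^{-1}(x))$ when $x<\Lambda(t)$, where $\Lambda^{-1}$ is a suitable inverse of the (possibly non-strictly) increasing function $\Lambda$ restricted to where it is needed, and $\varphi\in C^1([0,T])$ is chosen so that $\varphi\equiv0$ near $t=T$ and the two formulas glue in a $C^1$ fashion along the interface $x=\Lambda(t)$. The gluing condition is that $\varphi$ and its first derivatives match $y_0(0)$ and $y_0'(0)\cdot(\cdots)$ at the interface; since we have a whole time-interval $(0,T_0)$ of freedom and only finitely many compatibility values to match at the two ends (continuity with $y_0(0)$ at the corner $(0,0)$-side and vanishing near $T$), such a $\varphi$ exists — one can take for instance a smooth interpolation. (The support assumption $\mathrm{supp}\,\lambda\subset(0,T)$ makes $\Lambda$ constant near $t=0$ and near $t=T$, which further simplifies the matching and guarantees $\Lambda'=\lambda$ vanishes there.) Once $y\in C^1([0,T]\times[0,L])$ is built with $y(T,\cdot)=0$, one recovers $z$ by solving, for each fixed $t$, the elliptic two-point problem $z-\alpha^2 z_{xx}=y$ on $(0,L)$ with $z(t,0)=y(t,0)$, $z(t,L)=y(t,L)$. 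This problem has a unique solution, and by elliptic regularity plus smooth dependence on the parameter $t$ (the data $y(t,\cdot)$ and the boundary values depend $C^1$ on $t$), we get $z\in C^1([0,T]\times[0,L])$; moreover at $t=T$, since $y(T,\cdot)=0$ and the boundary values vanish, uniqueness gives $z(T,\cdot)=0$ as well. The third and fourth equations of \eqref{eq:chapouly} hold by construction of $z$, and the first holds because $y$ is constant along characteristics. This produces the desired $(y,z)$.

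\textbf{Main obstacle.} The routine parts are the elliptic solve for $z$ and its $C^1$-in-$t$ regularity. The delicate point is the construction of $y$: one must (a) verify that the region reached from $\{t=0\}$ and the region reached from the left boundary together cover $[0,T]\times[0,L]$ — this is precisely guaranteed by $\|\lambda\|_{L^1}>L$, i.e.\ the ``return'' of enough transport — and (b) choose the left-boundary datum $\varphi$ so that the resulting $y$ is genuinely $C^1$ up to and across the characteristic interface and at the spatial corners, while still vanishing near $t=T$. Handling the case where $\lambda$ may vanish on subintervals (so $\Lambda$ is not strictly increasing and $\Lambda^{-1}$ must be interpreted appropriately) requires a little care, but since on any interval where $\lambda\equiv0$ the transport is frozen and $y$ is simply constant in $t$ there, this causes no real trouble. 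I expect the $C^1$ gluing along $x=\Lambda(t)$, using the compatibility of $\varphi$ with $y_0$ at $x=0$, to be the technical heart of the argument.
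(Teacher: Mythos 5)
Your overall route coincides with the paper's: the paper obtains $y$ by invoking \cite[Proposition 8]{Chapouly} for the transport equation $y_t+\lambda(t)y_x=0$ with $y(0,\cdot)=y_0$ and $y(T,\cdot)=0$ (whose proof is exactly the characteristics construction you sketch), and then defines $z$ by solving, for each $t$, the elliptic two-point problem with boundary values $y(t,0)$, $y(t,L)$ — the routine step you also identify. So the decomposition is the same; you are essentially re-proving the cited transport result by hand.

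Two points in your transport construction need repair, and the hypothesis $\Lambda(T):=\int_0^T\lambda(s)\,ds>L$ is exactly what makes the repairs possible. First, the formula $y(t,x)=\varphi(t-\Lambda^{-1}(x))$ on the region $x<\Lambda(t)$ does not solve $y_t+\lambda(t)y_x=0$ unless $\lambda$ is constant: the characteristic entering at $(s,0)$ is $x=\Lambda(t)-\Lambda(s)$, so the entry time is $s=\Lambda^{-1}(\Lambda(t)-x)$ and the correct expression is $y(t,x)=\varphi\bigl(\Lambda^{-1}(\Lambda(t)-x)\bigr)$; cleaner still, set $y(t,x)=\psi(\Lambda(t)-x)$ for a $C^1$ function $\psi$ on $[0,\Lambda(T)]$, which both removes any need to invert the possibly non-strictly increasing $\Lambda$ and automatically encodes that the boundary value must be constant on intervals where $\lambda\equiv0$. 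Second, asking $\varphi\equiv0$ only ``near $t=T$'' does not force $y(T,\cdot)=0$: the backward characteristic from $(T,x)$ meets $x=0$ at the time $s$ with $\Lambda(s)=\Lambda(T)-x$, so you need $\varphi(s)=0$ for every $s$ with $\Lambda(s)\geq\Lambda(T)-L$, a set that can extend far below $T$ (take $\lambda$ concentrated near $t=0$ with $\int_0^T\lambda$ only slightly larger than $L$). What saves the construction is precisely the strict inequality $\Lambda(T)>L$: in the $\psi$-formulation one needs $\psi\equiv0$ on $[\Lambda(T)-L,\Lambda(T)]$, and the interval $[0,\Lambda(T)-L]$ is nonempty, leaving room to interpolate in a $C^1$ way from the compatibility values $\psi(0)=y_0(0)$, $\psi'(0)=-y_0'(0)$ (needed for the $C^1$ gluing across $x=\Lambda(t)$, which in this formulation is just a one-variable gluing) down to zero. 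Finally, note that $\mathrm{supp}\,\lambda\subset(0,T)$ is not a hypothesis of the proposition (it only appears in the later application), so the argument should not lean on it; with the corrections above it does not. The elliptic step and the $C^1$-in-$t$ dependence of $z$ are fine as you describe.
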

          
	For the proof, it suffices to use  \cite[Proposition $8$]{Chapouly} to find $y\in C^1([0,T]\times [0,L])$ satisfying \eqref{eq:chapouly}$_{1}$,\eqref{eq:chapouly}$_{4}$ and \eqref{eq:chapouly}$_5$ 
	and then solve the elliptic problem \eqref{eq:chapouly}$_{2}$-\eqref{eq:chapouly}$_{3}$ to construct $z\in C^1([0,T]\times [0,L])$.
	 
	Thanks to Proposition \ref{sp}, one may expect that the null controllability for the nonlinear system  \eqref{es14} holds. Indeed, we have the following result 
	from which Proposition \ref{es17} is an immediate consequence:

\begin{prop}\label{sp2}
	Let $T,\,L > 0$ be given and assume that $\lambda\in  \Lambda_{L,T,0}$. Then, there exist $\delta > 0$ and $C > 0$ (both independent of $\alpha$) such that,
	for any ~$y_0\in C^1([0,L])$ with $\|y_0\|_{C^1([0,L])}\leq \delta$ and any~$\alpha > 0$, there exist  $(v_l,v_r)\in C^1([0,T];\mathbb{R}^2)$ 
	and an associated state $(y,z)\in C^1([0,T]\times [0,L];\mathbb{R}^2)$ satisfying
         \begin{equation}\label{sp1}
         	\left\{
          		\begin{array}{lcl}
          			y_t + (\lambda(t) + z)y_x = 0			&\mbox{in}& 		(0,T)\times (0,L),\\
         		\noalign{\smallskip}\displaystyle	
			z - \alpha^2 z_{xx} = y 				&\mbox{in}& 		(0,T)\times (0,L),\\
          	\noalign{\smallskip}\displaystyle	
			y(\cdot,0) = z(\cdot,0) = v_l 			&\mbox{in}& 		(0,T),\\
         	\noalign{\smallskip}\displaystyle	
			y(\cdot,L) = z(\cdot,L) = v_r 			&\mbox{in}& 		(0,T),\\    
         	\noalign{\smallskip}\displaystyle	
			y(0,\cdot) = y_0						&\mbox{in}&		(0,L),\\
        \noalign{\smallskip}\displaystyle 	
			y(T,\cdot) = 0						&\mbox{in}& 		(0,L)
         		 \end{array}
         	 \right.
         \end{equation}
         and
\begin{equation*}\label{v-unif}
	\|y\|_{C^1([0,T]\times [0,L])}\leq C\|y_0\|_{C^1([0,L])}\quad \forall \alpha>0.
\end{equation*} 
\end{prop}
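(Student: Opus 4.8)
The plan is to fix $\lambda\in\Lambda_{L,T,0}$, regard the term $z$ in the transport velocity of \eqref{sp1}$_1$ as a perturbation of the velocity $\lambda(t)$ handled in Proposition~\ref{sp}, and close a fixed-point argument in the Banach space $E:=C^0([0,T]\times[0,L])$ on a small ball $B$ of $F:=C^1([0,T]\times[0,L])$, invoking Theorem~\ref{fixed-point}. Concretely, given $\bar y\in B$ I would first solve the elliptic problem $\bar z-\alpha^2\bar z_{xx}=\bar y$ in $(0,T)\times(0,L)$ with $\bar z(\cdot,0)=\bar y(\cdot,0)$, $\bar z(\cdot,L)=\bar y(\cdot,L)$; the key uniform point here is that the map $\bar y\mapsto\bar z$ is bounded \emph{independently of $\alpha$} from $C^1$ into $C^1$ (indeed from $C^0$ into $C^0$ by the maximum principle, and the $C^1$ bound comes by differentiating the equation in $x$ and using the same comparison argument, so $\|\bar z\|_{C^1}\le C\|\bar y\|_{C^1}$ with $C$ not depending on $\alpha$). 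Then I would apply Proposition~\ref{sp} with the velocity $\lambda(t)+\bar z(t,x)$ in place of $\lambda(t)$ — noting that $\lambda+\bar z\in C^{0,1}_x$ with the required regularity, and that for $\|\bar y\|_{C^1}\le\delta$ small one still has $\|\lambda\|_{L^1}>L$ dominating the perturbation — to obtain a unique $y\in C^1([0,T]\times[0,L])$ solving $y_t+(\lambda+\bar z)y_x=0$, $y(0,\cdot)=y_0$, $y(T,\cdot)=0$, with boundary traces $v_l,v_r\in C^1([0,T])$ and the linear estimate $\|y\|_{C^1}\le C\|y_0\|_{C^1}$, $C$ independent of $\alpha$. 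Setting $\Lambda(\bar y):=y$ defines the map whose fixed point solves \eqref{sp1}.

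The self-map property follows by choosing $\delta$ small enough that $C\delta$ lands inside $B$: since $\|y_0\|_{C^1}\le\delta$ gives $\|\Lambda(\bar y)\|_{C^1}\le C\delta$, one takes $B$ to be the ball of radius $C\delta$ in $F$ and shrinks $\delta$ so that $C\delta$ is below the threshold at which the construction of Proposition~\ref{sp} and the $C^{0,1}_x$-estimates remain valid uniformly. For the contraction, given $\bar y_1,\bar y_2\in B$ with corresponding $\bar z_1,\bar z_2$ and $y_1=\Lambda(\bar y_1)$, $y_2=\Lambda(\bar y_2)$, I would subtract the two transport equations: $w:=y_1-y_2$ solves $w_t+(\lambda+\bar z_1)w_x=-(\bar z_1-\bar z_2)(y_2)_x$ with $w(0,\cdot)=0$. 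Applying the method of characteristics (or Proposition~\ref{es28}-type estimates along the flow of $\lambda+\bar z_1$, whose existence and $C^1$ regularity is guaranteed by Proposition~\ref{flow4}) yields $\|w\|_{C^0}\le T\|(y_2)_x\|_{C^0}\,\|\bar z_1-\bar z_2\|_{C^0}\le CT\delta\,\|\bar y_1-\bar y_2\|_{C^0}$, using the $\alpha$-uniform bound $\|\bar z_1-\bar z_2\|_{C^0}\le C\|\bar y_1-\bar y_2\|_{C^0}$ and $\|(y_2)_x\|_{C^0}\le C\delta$. Shrinking $\delta$ once more makes $CT\delta<1$, so $\Lambda$ is a contraction for the $E$-norm on $B$. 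By Theorem~\ref{fixed-point}, $\Lambda$ extends to $\widetilde B$ and has a unique fixed point $y$ there; a posteriori $y\in C^1$ because its image under $\Lambda$ is in $F$, and $(y,z)$ with $z$ the associated filtered function solves \eqref{sp1} with $\|y\|_{C^1}\le C\|y_0\|_{C^1}$.

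The main obstacle I anticipate is not the fixed-point bookkeeping but ensuring all constants are genuinely \emph{independent of $\alpha$}. This hinges entirely on the elliptic step: one must show $\bar y\mapsto\bar z$ is bounded in $C^1$ uniformly in $\alpha$ despite the singular limit $\alpha\to0$. The clean way is the maximum principle — $\|\bar z\|_{C^0}\le\|\bar y\|_{C^0}$ plus boundary data — combined with an analogous bound on $\bar z_x$ obtained either by differentiating the equation (so $\bar z_x-\alpha^2\bar z_{xxx}=\bar y_x$ with appropriate boundary control) or by the explicit Green's-function representation $\bar z=h+\int G_\alpha(x,\xi)(\bar y-h)(\xi)\,d\xi$, where $h$ is the affine interpolation of the boundary data and $\int|G_\alpha|\,d\xi$ and $\int|\partial_xG_\alpha|\,d\xi$ are bounded uniformly in $\alpha$ (the latter because $\partial_xG_\alpha$ is an approximate identity of total mass $\le1$). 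A secondary technical point is checking that $\lambda+\bar z$ meets the regularity hypotheses of Proposition~\ref{sp} and of Proposition~\ref{flow4} — continuity in $(t,x)$, Lipschitz in $x$ uniformly in $t$, and $\partial_x(\lambda+\bar z)=\bar z_x$ continuous — which is immediate once $\bar z\in C^1$; and that the hypothesis $\lambda\in\Lambda_{L,T,0}$ (rather than $C^1$) suffices because Proposition~\ref{sp} already only requires $\lambda\in\Lambda_{L,T,0}$ and the perturbed velocity inherits exactly that same level of regularity.
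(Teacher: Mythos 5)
Your overall skeleton coincides with the paper's: the $\alpha$-uniform elliptic bound by the maximum principle, a fixed-point map $\bar y\mapsto y$ defined through the controlled transport problem with velocity $\lambda+\bar z$, and an application of Theorem~\ref{fixed-point} with contraction measured in $C^0$ on a ball of $C^1$ (Lipschitz) functions. But there is a genuine gap at the central step: you ``obtain'' the controlled solution $y$ of $y_t+(\lambda+\bar z)y_x=0$, $y(0,\cdot)=y_0$, $y(T,\cdot)=0$, with $C^1$ traces and a linear bound, by ``applying Proposition~\ref{sp} with $\lambda(t)+\bar z(t,x)$ in place of $\lambda(t)$''. Proposition~\ref{sp} is stated, and proved via \cite[Proposition 8]{Chapouly}, only for a velocity $\lambda(t)$ that is independent of $x$ (indeed $\Lambda_{L,T,0}$ consists of nonnegative functions of $t$ alone); the perturbed velocity does not satisfy its hypotheses, and the extension to $x$-dependent perturbations is precisely the nontrivial content of the proof of Proposition~\ref{sp2}. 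Concretely, one must: extend $y_0$ to a compactly supported $y_0^*$ and extend--cut off the elliptic solution to get $z^*\in C^0([0,T];C^{0,1}_b(\mathbb{R}))$ supported near $[0,L]$; define $y$ by transporting $y_0^*$ along the flow $\phi^*$ of $\lambda+z^*$; and prove $y(T,\cdot)=0$ on $[0,L]$ by comparing $\phi^*$ with the flow of $\lambda$, which requires the quantitative smallness $\|z^*\|_{C^0}\le \eta/T$ where $\eta$ is the margin in $\int_0^T\lambda\,dt>L+2\eta$. This forces the radius of the fixed-point ball to be fixed a priori as $R=\eta/(C_2T)$ (and only then $\delta$ is chosen so that the image stays in the ball); your ``shrink $\delta$ so the construction remains valid'' is exactly the point that has to be made precise, since otherwise the exact null condition at time $T$ — which is built into the very definition of the map, not a consequence of it — is not guaranteed for every element of the ball.

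A secondary but related omission: your contraction estimate applies Proposition~\ref{es28} to the difference $w=y_1-y_2$ on $(0,T)\times(0,L)$, but that proposition (and the characteristics argument behind it) is formulated on all of $\mathbb{R}$; on the bounded interval the characteristics cross the lateral boundary, where the trace is the unknown control, so the sup-norm estimate again requires the whole-line extension of both the data and the perturbations (this is how the paper bounds $\|z^{1,*}-z^{2,*}\|_{C^0_b(\mathbb{R})}$ by $\|h^1-h^2\|_{C^0([0,L])}$ via the reflection operator). Once these constructions are added, your variant of the final step — making a single application of the map contractive by shrinking $\delta$ — would work just as well as the paper's device of showing that an iterate $\mathcal{F}^m$ is a contraction with factor $(Ct)^m/m!$; that difference is cosmetic.
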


\begin{proof} 
	We will reformulate the null controllability problem as a fixed-point equation. To do this, we will first introduce some auxiliar functions and establish some helpful results. Thus, to any given 
	$h \in  C^0([0,T];C^0([0,L]))\cap L^\infty\left(0,T; C^{0,1}([0,L])\right)$ we can associated the unique solution to  the time-dependent problem
           \begin{equation}\label{sp3}
           \left\{
           \begin{array}{lll}
           z - \alpha^2 z_{xx} = h 		&\mbox{in}& (0,T)\times (0,L),\\
        \noalign{\smallskip}\displaystyle
           z(\cdot,0) = h(\cdot,0) 	&\mbox{in}& (0,T),\\
        \noalign{\smallskip}\displaystyle
           z(\cdot,L) = h(\cdot,L)  	&\mbox{in}& (0,T).
           \end{array}
           \right.
           \end{equation}
           From the {\it maximum principle} for elliptic equations, we get
           \begin{equation}\label{sp4}
           \|z\|_{C^0([0,T];C^0([0,L]))}\leq   \|h\|_{C^0([0,T];C^0([0,L]))}
           \end{equation}
            and
            \begin{equation*}\label{sp41}
           \|z_x\|_{C^0([0,T];L^\infty(0,L))}\leq   \|h_x\|_{L^\infty(0,T;L^\infty(0,L))}.
           \end{equation*}          
     Since $\lambda\in  \Lambda_{L,T}$,  we can find $\eta \in (0,L/2)$ such that
     \begin{equation}
     \int_{0}^{T} \lambda(s)\, ds > L + 2\eta. \label{sp7}
     \end{equation}
     
     Now, we consider the following extension of $z$ to the closed interval $[-\eta, L + \eta]$:
     \begin{equation*}\label{sp11}
     z^\eta(t,x):=
     \left\{
     \begin{array}{ll}
     5z(t,-x) - 20z\left(t,-{x\over2}\right) + 16z\left(t,-{x\over4}\right),&(t,x)\in [0,T]\times [-\eta,0],\\ 
 \noalign{\smallskip}
     z(t,x)& (t,x),\in [0,T]\times [0,L],\\
 \noalign{\smallskip}     5z(t,2L-x) - 20z\left(t,{3L - x\over2}\right) + 16z\left(t,{5L - x\over4}\right),& (t,x)\in [0,T]\times [L,L+ \eta].
     \end{array}
     \right.
     \end{equation*}        
         It is not difficult to check that $z^\eta\in C^0([0,T];C^2([-\eta, L + \eta]))$ and there exists $C_1 > 0$ $($independent of $\alpha$$)$ such that
  \begin{equation}\label{es22}
  \|z^\eta\|_{C^0([0,T];C^{0,1}([-\eta, L + \eta]))}\leq C_1\|z\|_{C^0([0,T];C^{1}([0,L]))}.
  \end{equation}

        Then, let $\chi$ be given, with $\chi\in C_0^{\infty}(-\eta/2, L + \eta/2)$, $\chi = 1$ in $[0,L]$ and $0\leq\chi\leq 1$.


	This way, we can introduce $z^*\in C^0([0,T];C^2(\mathbb{R}))$, with
           \begin{equation}
           z^*(t,x) = 
           \left\{
           \begin{array}{ll}
           \chi(x)z^\eta(t,x),& (t,x)\in [0,T]\times[-\eta, L + \eta],\\
         \noalign{\smallskip}\displaystyle	
           0,& (t,x)\in [0,T]\times(\mathbb{R}\setminus[-\eta, L + \eta]).   \label{mg17}
           \end{array}
           \right.
           \end{equation}
    and, using \eqref{es22}  and \eqref{mg17}, we see that
          \begin{equation}\label{sp21}
          \|z^*\|_{C^0([0,T];C^{0,1}_b(\mathbb{R}))} \leq C_2\|z\|_{C^0([0,T];C^{0,1}([0,L]))},
          \end{equation}
          for some $C_2 > 0$, again independent of $\alpha$.
   
   	Let us set $R := {\eta\over C_2T}$ and let us assume from now on that
          \begin{equation}
          \|h\|_{C^0([0,T];C^{0,1}([0,L]))} \leq R. \label{sp13} 
          \end{equation}
	Then, it follows from \eqref{sp4}, \eqref{sp21} and \eqref{sp13} that
          \begin{equation}\label{sp141}
          \|z^*\|_{C^0([0,T];C^{1}_b(\mathbb{R}))}\leq {\eta\over T}. 
          \end{equation}

    Let $\phi^*$ be the flow associated with the ordinary differential equation $\xi' = \lambda(t) + z^*(t,\xi)$, that is,  the solution to
        \begin{equation}\label{sp16}
        \left\{
        \begin{array}{ll}
          \displaystyle\frac{\partial\phi^*}{\partial t}(s;t,x) = \lambda(t) + z^*(t, \phi^*(s;t,x)),&\\
       \noalign{\smallskip}\displaystyle	
        \phi^*(s;s,x) = x. &
        \end{array}
        \right.
        \end{equation}

\begin{claim}\label{Claim1} The function $\phi^* = \phi^*(s;t,x)$ is well-defined for any $(t,x)\in [0,T]\times \mathbb{R}$ and $s\in [0,T]$.
\end{claim}

\begin{proof} Let $\phi : [0,T]\times [0,T]\times\mathbb{R}\mapsto\mathbb{R}$ be the flow associated to the ODE $\xi' = \lambda(t)$. 
Then, for every $(s,t,x)$ we get from \eqref{sp16} that

        \begin{equation*}
        \left.
        \begin{array}{lll}
        |\phi^*(s;t,x) - \phi(s;t,x)|&=& \left|\displaystyle\int_{s}^{t}  \left(\frac{\partial\phi^*}{\partial \tau}(s;\tau,x) - \frac{\partial\phi}{\partial\tau}(s;\tau,x) \right)\,d\tau\right|\\
        \noalign{\smallskip}\displaystyle
        &\leq & \displaystyle\int_{s}^{t}\left|\frac{\partial\phi^*}{\partial\tau}(s;\tau,x) -  \frac{\partial\phi}{\partial\tau}(s;\tau,x)\right|\,d\tau\\
        \noalign{\smallskip}\displaystyle
        &= & \displaystyle\int_{s}^{t}|z^*(\tau,\phi^*(s;\tau,x))|\,d\tau\\
       \noalign{\smallskip}\displaystyle
        &\leq & T\|z^*\|_{C^0([0,T];C^0(\mathbb{R}))}.
        \end{array}
        \right.
       \end{equation*}
        Hence, for any $(s,t,x)$ such that $\phi^*(s;t,x)$ is well-defined, one has
        \begin{equation}\label{sp17}
        |\phi^*(s;t,x) - \phi(s;t,x)| \leq \eta.  
        \end{equation}
This and the fact that $\phi(s;t,x)$ is well-defined for all $(s,t,x)\in [0,T]\times [0,L]\times\mathbb{R}$ lead to the desired conclusion.
\end{proof}

\

Let $y_0\in C^1([0,L])$ be given and let us introduce $y_0^\eta\in C^1\left([-\eta,L + \eta]\right)$ with

      \begin{equation*}\label{sp8}
      y_0^\eta(x) = 
      \left\{
      \begin{array}{ll}
      -y^0(-x) + 2y^0(0),& x\in [-\eta,0],\\
      \noalign{\smallskip}\displaystyle	
      y^0(x),& x\in [0,L],\\ 
     \noalign{\smallskip}\displaystyle	
      -y^0(2L-x) + 2y^0(L),& x\in [L,L+\eta]
      \end{array}
      \right.
      \end{equation*}
     and 
      \begin{equation*}
      y^{*}_0(x) = 
      \left\{
      \begin{array}{ll}
      \chi(x)y_0^\eta(x),& x\in [-\eta, L + \eta],\\
     \noalign{\smallskip}\displaystyle	
      0,& x\in\mathbb{R}\setminus [-\eta, L + \eta].  \label{sp18}
      \end{array}
      \right.
      \end{equation*} 
      Then, it is easy to see that $y_0^{*}$ is an extension of $y_0$ to the whole real line and 
      \begin{equation}\label{sp10}
      \|y^{*}_0\|_{C^{1}_b(\mathbb{R})}\leq C_3\|y_0\|_{C^{1}([0,L])}.
      \end{equation}
      for some $C_3 > 0$.
      
	Let us set $y\in C^1([0,T]\times\mathbb{R})$, with
        \begin{equation}\label{sp19}
         y(t,\bar x) := y_0^{*}(\phi^*(t;0,\bar x))\quad \forall (t,\bar x)\in [0,T]\times\mathbb{R}.
        \end{equation}
     Then, we have the following :
     
      \begin{claim}\label{es27}
	The function $y$ satisfies:
       \begin{equation}\label{eq:transport}
       \left\{
       \begin{array}{lll}
       y_t + (\lambda(t) + z^*(t,x))y_x = 0 &\mbox{in}& (0,T)\times\mathbb{R},\\
          \noalign{\smallskip}\displaystyle	
       y(0,\cdot) = y_0^{*} &\mbox{in} & \mathbb{R},\\
           \noalign{\smallskip}\displaystyle	
        y(T,\cdot) = 0 &\mbox{in} &[0,L].
       \end{array}
       \right.
       \end{equation}
      \end{claim}
\begin{proof}
	For any $t\in [0,T]$, $\phi^*(0;t,\cdot):\mathbb{R}\to\mathbb{R}$ is a diffeomorphism  and \eqref{sp19} is equivalent to $y(t,\phi^*(0,t,x)))\equiv y^{*}_{0}(x)$. 
	Then, for each $x\in \mathbb{R}$, we deduce that 
       \begin{equation*}
       \left.
       \begin{array}{rll}
       & y_t(t,\phi^*(0;t,x))+\left[\lambda(t) + z^*(t,\phi^*(0;t,x))\right]y_x(t,\phi^*(0;t,x)) = 0.
       \end{array}
      \right.
      \end{equation*}
      Using \eqref{sp19} and $\eqref{sp16}_2$,  we get
      \[
       y(0,x) =  y^{*}_0(x)\quad\forall\,\, x\in \mathbb{R}.
       \]
    Moreover, it is not difficult to see that, for any $0 < \eta < L/2$ such that (\ref{sp7}) holds, the flow associated to the ODE $\xi' = \lambda(t)$ satisfies $\phi(T;0,L) < -2\eta$
    and we obtain from \eqref{sp17} that $\phi^*(T;0,L) < -\eta$. 
    Since $\phi^*(s;t,\cdot)$ is increasing for any $s,t\in [0,T]$, we see that
          \begin{equation*}
          \phi^*(T;0,x) < -\eta \quad \forall\,\, x\in (-\infty,L].
          \end{equation*}
	This inequality, together with the fact that $$
		\mbox{supp}~y^{*}_0\subset\left[-\eta, L + \eta\right],
	$$
	implies $y(T,\cdot) = 0$ in $[0,L]$.
\end{proof}
      
      \
      
	An immediate consequence of \eqref{sp141}, $C^1$ estimates for \eqref{sp16}, \eqref{sp10}  and \eqref{sp19} is that 
\begin{equation*}\label{est:init_data}
		\|y\|_{C^1([0,T]\times [0,L])}\leq C_4 \|y_0\|_{C^{1}([0,L])},
\end{equation*}
	for a positive constant $C_4$ depending on $R$ but independent of $\alpha$.
	Taking $y_0\in C^1([0,L])$ such that $$\|y_0\|_{C^{0,1}([0,L])}\leq R/C_4,$$ 
	we have that $\|y\|_{C^0([0,T];C^{0,1}([0,L]))}\leq R$ and 
     we can therefore introduce the mapping $\mathcal{F} : B_R \mapsto B_R$, where $B_R$ is the closed ball of radius $R$ in the space $C^0([0,T];C^0([0,L]))\cap L^{\infty}(0,T;C^{0,1}([0,L]))$ and, for each $h\in B_R$, $y = \mathcal{F}(h)$ is given by \eqref{sp19}.

    
    Thanks to \eqref{sp19}, we have that $ \mathcal{F}(B_R)\subset C^1([0,T]\times [0,L])$. Moreover, the following holds: 
     \begin{claim}\label{es29}
     There exists a positive constant $C$ that depends on $\|y_0\|_{C^1([0,L])},L,R$ and $T$, such that, for any $m\geq 1$ and any $h^1,h^2\in B_R$, one has 
     \begin{align*}
     \|(\mathcal{F}^m(h^1) - \mathcal{F}^m(h^2))(t,\cdot)\|_{C^0([0,L])} \leq \dfrac{(Ct)^m}{m!} \|h^1 - h^2\|_{C^0([0,T];C^0([0,L]))}\,\,\mbox{in}\,\, [0,T].
     \end{align*}
     \end{claim}
     
     \begin{proof} The proof relies on an induction argument. Let $h^i\in B_R$ be given for $i=1,2$. Then, let us consider the functions $z^{i,*}$ and $y^i$, respectively  given by \eqref{mg17} and \eqref{sp19} and set $y := y^1 - y^2$ and $z^* := z^{1,*} - z^{2,*}$. By Claim \ref{es27}, we have
       \[
       y_t + (\lambda + z^{1,*})y_x = -z^* y^2_{x}\quad \mbox{in}\quad (0,T)\times\mathbb{R},
       \]
        whence, from Proposition \ref{es28},
        \begin{align*}
        \dfrac{d}{dt^+}\|y(t,\cdot)\|_{C^0_b(\mathbb{R})} & \leq \|z^*(t,\cdot)y^2_x(t,\cdot)\|_{C^0_b(\mathbb{R})}.
        \end{align*}
       Therefore, integrating from $0$ to $t$ and using that $y^2_{x} \in C^0([0,T];C^0_b(\mathbb{R}))$ and the maximum principle for elliptic PDE's, we find a positive constant $C$ depending on  $\|y_0\|_{C^1([0,L])},L,R$ and $T$, such that
\begin{align*}
        	\|y(t,\cdot)\|_{C^0_b(\mathbb{R})} &\leq C\displaystyle\int_{0}^{t}\|z^*(\tau,\cdot)\|_{C^0_b(\mathbb{R})}\,d\tau\\
	&\leq C\displaystyle\int_{0}^{t}\|z^1(\tau,\cdot) - z^2(\tau,\cdot)\|_{C^0([0,L])}\,d\tau\\
	&\leq C\displaystyle\int_{0}^{t}\|h^1(\tau,\cdot) - h^2(\tau,\cdot)\|_{C^0([0,L])}\,d\tau.
\end{align*} 
         
         It follows that
        \begin{align}\label{es30}
        \|(\mathcal{F}(h^1) - \mathcal{F}(h^2))(t,\cdot)\|_{C^0([0,L])}& \leq Ct\|h^1 - h^2\|_{C^0([0,T];C^0([0,L]))} 
        \end{align}
       and the result is true for $m = 1$. 
        
        Now, assume that the claim is true for a fixed $m$ and let us prove that it holds also for $m + 1$. \\
        Performing computations similar to those above, we get
         \begin{align*}
         \|(\mathcal{F}^{m+1}(h^1) - \mathcal{F}^{m+1}(h^2))(t,\cdot)\|_{C^0([0,L])} & \leq C\displaystyle\int_{0}^{t}\|(\mathcal{F}^{m}(h^1) - \mathcal{F}^{m}(h^2))(\tau,\cdot)\|_{C^0([0,L])}\,\,d\tau. 
         \end{align*}
         where $C$ is the same positive constant in \eqref{es30}.
                
         Using the induction hypothesis, we deduce that
         \begin{align*}
         \|(\mathcal{F}^{m+1}(h^1) - \mathcal{F}(h^2)^{m+1})(t,\cdot)\|_{C^0([0,L])} & \leq C\|h^1 - h^2\|_{C^0([0,T];C^0([0,L]))} \displaystyle\int_{0}^{t}\dfrac{(C\tau)^m}{m!}\,\,d\tau\\
         & = \dfrac{(Ct)^{m+1}}{(m + 1)!}\|h^1 - h^2\|_{C^0([0,T];C^0([0,L]))}.
         \end{align*}
         Therefore, the result is also true for $m + 1$ and the proof is done.                   
     \end{proof}
     
     \
     
      Let $\widetilde B_R$ be the closure of $B_R$ with the norm of $C^0([0,T];C^0([0,L]))$ and  let $\widetilde{\mathcal{F}}$ 
      be the unique continuous extension of  $\mathcal{F}$ to $\widetilde B_R$.
\begin{claim}\label{es30}
	The extension $\widetilde{\mathcal{F}}$ satisfies: 
	\begin{align*}
		\widetilde{\mathcal{F}}(\widetilde B_R)\subset  B_R.
	\end{align*}
\end{claim}
\begin{proof} Let $h$ be a function in  $\widetilde B_R$. Then:
	\begin{itemize}
		\item $\widetilde{\mathcal{F}}(h)\in C^1([0,T]\times [0,L])$ and solves \eqref{eq:transport}. Indeed, let $(h_n)_{n\in \mathbb{N}}$ be a sequence in  $B_R$ such
	that $h_n\to h$ in $C^0([0,T];C^0([0,L]))$. Then, the corresponding elliptic solutions to \eqref{sp3} and 
	the associated flows satisfy 
	$$
		z_n\to z~~\text{in}~~C^0([0,T];C^2([0,L]))\quad \text{and}\quad \Phi_n \to \Phi ~~\text{in}~~C^0([0,T]\times [0,T]\times  \mathbb{R}).
	$$
	Moreover, since the $ \Phi_n, \Phi \in C^1([0,T]\times [0,T]\times  \mathbb{R})$, verify the corresponding functions defined by \eqref{sp19}
	belong to $C^1([0,T]\times  \mathbb{R})$ and verify
	$$
		y_n\to y~~\text{in}~~C^0([0,T];C^0([0,L])).
	$$	
	Therefore, $\widetilde{\mathcal{F}}(h)=y\in C^1([0,T]\times [0,L])$.
		\item $\widetilde{\mathcal{F}}(h)\in B_R$. In fact, notice that, by definition of $\widetilde{\mathcal{F}}$, there exists a sequence $(h_n)_{n\in \mathbb{N}}\subset B_R$ such that
		$h_n\to h$ in $C^0([0,T];C^0([0,L]))$ and 
		$$
			\mathcal{F}(h_n)\to \widetilde{\mathcal{F}}(h)\quad \text{in}\quad C^0([0,T];C^0([0,L])).
		$$
	On the other hand, it is not difficult to prove that
	$$
	\begin{alignedat}{2}
		\|\widetilde{\mathcal{F}}(h)(t,\cdot)\|_{C^0([0,L])}+\|\widetilde{\mathcal{F}}(h)(t,\cdot)\|_{C^{0,1}([0,L])}
		\leq &~\|\mathcal{F}(h_n)(t,\cdot)\|_{C^0([0,L])}+\|\mathcal{F}(h_n)(t,\cdot)\|_{C^{0,1}([0,L])} \\
		&~4\|\mathcal{F}(h_n)(t,\cdot)-\widetilde{\mathcal{F}}(h)(t,\cdot)\|_{C^0([0,L])}.
	\end{alignedat}
	$$	
		Therefore, using the fact that $\widetilde{\mathcal{F}}(h)\in C^1([0,T]\times [0,L])$, we certainly have that  $\widetilde{\mathcal{F}}(h)\in B_R$.
	\end{itemize}
\end{proof}

\
      
     It follows from Claim \ref{es29} that $\widetilde{\mathcal{F}}^m$ is a contraction for $m$ large enough. Then, from Banach Fixed-Point Theorem \ref{fixed-point}, 
     $\widetilde{\mathcal{F}}$ possesses a unique fixed-point $y\in \widetilde B_R$. Finally, taking into account Claim \ref{es30}, the proof of Proposition \ref{sp2} is achieved.   
\end{proof}

\begin{remark}\label{rm:1}
  	\textrm{Let $T,\,L > 0$, assume that $\lambda\in  \Lambda_{L,T,1}$ and
   consider the Banach space
	$$
	\mathcal{X}= C^0([0,T];C^1([0,L]))\cap C^1([0,T];C^0([0,L]))
	\cap L^{\infty}(0,T; C^{1,1}([0,L])) 
	$$
	If $y_0\in C^2([0,L])$ is small enough, then the fixed-point mapping $\mathcal{F}$ can be defined in a closed ball of $\mathcal{X}$  centered at zero of radius $R>0$. 
	Then, one applies  Banach Fixed-Point Theorem in the closure of this ball with the norm of $C^0([0,T];C^1([0,L]))\cap C^1([0,T];C^0([0,L]))$. 
   Performing similar computations of Proposition \ref{sp2}, one can deduce that there exists $\delta > 0$ (independent of $\alpha$) such that,
	for any ~$y_0\in C^2([0,L])$ with $\|y_0\|_{C^2([0,L])}\leq \delta$, there exists a solution $y\in C^2([0,T]\times [0,L])$ to
	\eqref{sp1}, satisfying
   \begin{equation}\label{eq:c2}
         	\|y\|_{C^2([0,T]\times[0,L]))}\leq C\|y_0\|_{C^2([0,L])} \quad \forall \alpha>0,
	\end{equation}
	for a constant $C>0$ that is independent of $\alpha$.}

\end{remark}

	\

\subsection{Global exact controllability}

	In order to prove Theorem \ref{thm1}, we have to use scaling arguments and the time-reversibility of the inviscid Burgers-$\alpha$ system. 
	Thus, let $T,L > 0$ be given, let us consider initial and final states $y_0, y_T\in C^1([0,L])$, let $\delta > 0$ be given by Proposition \ref{es17} 
	and let $\gamma_0, \gamma_T\in (0,1)$ be such that $\gamma_0 < \gamma_T$,           
	         \begin{align*}
            \|\gamma_0 y_0\|_{C^1([0,L])} \leq \delta\,\,\,\mbox{and}\,\,\, \|(1-\gamma_T) y_T\|_{C^1([0,L])} \leq \delta.
            \end{align*}
             
Then, by Proposition \ref{es17}, there exist distributed controls $\widetilde p$, $\widehat{p}$ in $C^0_c((0,T))$, boundary controls $(\widetilde v_l,\widetilde v_r)$, $(\widehat{v}_l,\widehat{v}_r)$ in $C^1([0,T])$ and associated states 
$(\widetilde y,\widetilde z)$, $(\widehat{y},\widehat{z})$ in $C^1([0,T]\times[0,L])$ such that
    \begin{equation}\label{es23}
    \left\{ 
    \begin{array}{lll}
    \widetilde y_t +\widetilde z\,\widetilde y_x =\widetilde p(t)&\mbox{in}&(0,T)\times (0,L),\\
   \widetilde z - \alpha^2 \widetilde z_{xx} =\widetilde y &\mbox{in}& (0,T)\times (0,L),\\
   \widetilde z(\cdot,0) = \widetilde y(\cdot,0) =\widetilde v_l &\mbox{in}& (0,T),\\
   \widetilde z(\cdot,L) = \widetilde y(\cdot,L) =\widetilde v_r &\mbox{in}& (0,T),\\
   \widetilde y(0,\cdot) = \gamma_0 y_0(x) &\mbox{in}& (0,L),\\
   \widetilde y(T,\cdot) = 0 &\mbox{in} & (0,L)
    \end{array}
    \right.
    \end{equation} 
and
   \begin{equation}\label{es24}
   \left\{
   \begin{array}{lll}
   \widehat{y}_t + \widehat{z}\widehat{y}_x = \widehat{p}(t)&\mbox{in}&(0,T)\times (0,L),\\
   \widehat{z} - \alpha^2 \widehat{z}_{xx} = \widehat{y} &\mbox{in}& [0,T]\times [0,L],\\
   \widehat{z}(\cdot,0) = \widehat{y}(\cdot,0) = \widehat{v}_l &\mbox{in}& (0,T),\\
   \widehat{z}(\cdot,L) = \widehat{y}(\cdot,L) = \widehat{v}_r &\mbox{in}& (0,T),\\
   \widehat{y}(0,\cdot) = (1 -\gamma_T) y_T &\mbox{in}& (0,L),\\
   \widehat{y}(T,\cdot) = 0 &\mbox{in} & (0,L).
   \end{array}
   \right.
   \end{equation}
    
Using $\eqref{es23}, \eqref{es24}$ and the facts that $\widetilde p(T) = \widehat{p}(T) = 0$ and $\gamma_0\thicksim 0$ and $\gamma_T\thicksim 1$, we can introduce the functions $Y, Z : [0,T]\times [0,L]\mapsto\mathbb{R}$ and $P, V_l, V_r : [0,T]\mapsto\mathbb{R}$, given by 
     \begin{equation*}
     Y(t,x) := 
     \left\{
     \begin{array}{ll}
     \gamma_0^{-1}\, \widetilde y\left(t\,\gamma_0^{-1},x\right)& (t,x)\in \left[0,\gamma_0 T\right]\times   [0,L],\\
   \noalign{\smallskip}\displaystyle	
    0 & (t,x)\in \left[\gamma_0 T,\gamma_1 T\right]\times [0,L],\\
    \noalign{\smallskip}\displaystyle
    \dfrac{1}{1 - \gamma_T}\widehat{y}\left(\dfrac{T - t}{1 - \gamma_1}, L - x\right) & (t,x)\in     [\gamma_1 T,T]\times [0,L],
    \end{array}
    \right.
    \end{equation*}
     \begin{equation*}
     Z(t,x) :=
    \left\{
    \begin{array}{ll}
     \gamma_0^{-1}\, \widetilde z\left(t\,\gamma_0^{-1},x\right)& (t,x)\in \left[0,\gamma_0 T\right]\times  [0,L],\\
  \noalign{\smallskip}\displaystyle	
    0 & (t,x)\in \left[\gamma_0 T,\gamma_1 T\right]\times [0,L],\\
     \noalign{\smallskip}\displaystyle
    \dfrac{1}{1 - \gamma_T}\widehat{z}\left(\dfrac{T - t}{1 - \gamma_T}, L - x\right) & (t,x)\in    [\gamma_1 T,T]\times [0,L],
    \end{array}
    \right.
    \end{equation*}
     \begin{equation*}
P(t) :=
\left\{
\begin{array}{ll}
 \gamma_0^{-2}\, \widetilde p\left(t\,\gamma_0^{-1}\right)& t\in \left[0,\gamma_0 T\right],\\
\noalign{\smallskip}\displaystyle	
0 & t\in \left[\gamma_0 T,\gamma_1 T\right],\\
 \noalign{\smallskip}\displaystyle
-\dfrac{1}{(1 - \gamma_T)^2}\widehat{p}\left(\dfrac{T - t}{1 - \gamma_T}\right) & t\in [\gamma_1 T,T],
\end{array}
\right.
\end{equation*}
      \begin{equation*}
      V_l(t) :=
      \left\{
      \begin{array}{ll}
       \gamma_0^{-1}\, \widetilde v_l\left(t\,\gamma_0^{-1},x\right)& t\in \left[0,\gamma_0 T\right],\\
    \noalign{\smallskip}\displaystyle	
      0 & t\in \left[\gamma_0 T,\gamma_1 T\right],\\
      \noalign{\smallskip}\displaystyle
      \dfrac{1}{1 - \gamma_T}\widehat{v}_r\left(\dfrac{T - t}{1 - \gamma_T}\right) & t\in [\gamma_1 T,T]
      \end{array}
      \right.
      \end{equation*}
and
    \begin{equation*}
     V_r(t) :=
      \left\{
      \begin{array}{ll}
      \gamma_0^{-1}\, \widetilde v_r\left(t\,\gamma_0^{-1},x\right) & t\in \left[0,\gamma_0 T\right],\\
     \noalign{\smallskip}\displaystyle	
      0 & t\in \left[\gamma_0 T,\gamma_1 T\right],\\
      \noalign{\smallskip}\displaystyle
      \dfrac{1}{1 - \gamma_T}\widehat{v}_l\left(\dfrac{T - t}{1 - \gamma_T}\right) & t\in [\gamma_1  T,T].
      \end{array}
      \right.
      \end{equation*}
	
	 It is now straightforward to check that $(Y,Z)\in C^1([0,T]\times [0,L];\mathbb{R}^2)$, $P\in C^0([0,T])$, $V_l,V_r \in C^1([0,T])$ and \eqref{es14} and \eqref{mg3} are satisfied.


\section{Global controllability of the viscous Burgers-$\alpha$ system}\label{sec:viscous}


\subsection{Smoothing effect} \label{smooth} 

The goal of this section is to prove that, starting from a $H^1_0$ initial data, there exists a small time where the solution begins to be smooth. 
More precisely, we have the following result:
\begin{prop}\label{re9}
    Let $y_0\in H^1_0(0,L)$ be given and let $(y^\alpha,z^\alpha)$ be the solution to

   \begin{equation}\label{re1}
   \left\{
   \begin{array}{lll}
   y^\alpha_t - y^\alpha_{xx} + z^\alpha y^\alpha_x = 0 						&\mbox{in}&(0,T)\times (0,L),\\
   z^\alpha - \alpha^2 z^\alpha_{xx} = y^\alpha 								&\mbox{in}&(0,T)\times (0,L),\\
   y^\alpha(\cdot,0) = y^\alpha(\cdot,L) = z^\alpha(\cdot,0) = z^\alpha(\cdot,L) = 0 	&\mbox{in}&(0,T),\\
   y^\alpha(0,\cdot) = y_0 												&\mbox{in}&(0,L).
   \end{array}
   \right.
   \end{equation}
    Then, there exist~$T^*\in(0,T/2)$~and~$C > 0$ (independent of $\alpha$) such that the solution 
    $y^\alpha$ belongs to $C^0([T^*, T];C^2([0,L]))$ and satisfies
       \begin{equation*}\label{re13}
       \|y^\alpha\|_{C^0([T^*, T];C^2([0,L]))} \leq \Lambda (\|y_0\|_{H_0^1}),
       \end{equation*}
       where $\Lambda:\mathbb{R}_+ 	\to \mathbb{R}_+$ is
        a continuous function satisfying $\Lambda(s)\to 0$ as $s \to 0^+$.
   \end{prop}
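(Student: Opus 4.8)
The plan is to run a parabolic bootstrap on the equation obtained from the first line of \eqref{re1}, written in the form $y^\alpha_t-y^\alpha_{xx}=g^\alpha$ with $g^\alpha:=-z^\alpha y^\alpha_x$, multiplying $y^\alpha$ by smooth cut-offs in time so as to transfer the smoothing estimates (which blow up as $t\to0^+$) onto a fixed subinterval $[T^*,T]\subset(0,T/2)$. Two successive gains of regularity will suffice: from $C^0([0,T];H^1_0)$ to $C^0([t_1,T];H^2)$, and then to $C^0([t_2,T];H^3(0,L))\hookrightarrow C^0([T^*,T];C^2([0,L]))$, with $T^*:=t_2$. The only place where the parameter $\alpha$ intervenes is through $z^\alpha=(Id-\alpha^2\partial^2_{xx})^{-1}y^\alpha$, and I will use repeatedly that, thanks to the homogeneous Dirichlet conditions satisfied by $y^\alpha(t)$ in \eqref{re1}, this filter is an $\alpha$-uniform contraction in the norms we need.

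First I would record the basic bounds. Applying Proposition \ref{re10} with $f\equiv0$, $v_l=v_r\equiv0$ and $y_0\in H^1_0(0,L)$ (the compatibility conditions are trivially satisfied), and using that $\|y_0\|_\infty\le C\|y_0\|_{H^1_0}$ in one dimension, the constant $M_T$ there is $\le C\|y_0\|_{H^1_0}$, so \eqref{mg5} gives
\[
\|y^\alpha\|_{L^\infty((0,T)\times(0,L))}+\|y^\alpha\|_{H^1(0,T;L^2)\cap L^2(0,T;H^2)\cap C^0([0,T];H^1_0)}\le\Lambda_0(\|y_0\|_{H^1_0}),
\]
with $\Lambda_0$ continuous, non-decreasing and $\Lambda_0(0)=0$, and $\|z^\alpha\|_\infty\le M_T$ by the maximum principle. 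Testing $z^\alpha-\alpha^2z^\alpha_{xx}=y^\alpha$ with $z^\alpha$ and with $-z^\alpha_{xx}$ (using $y^\alpha(t),z^\alpha(t)\in H^1_0$) gives $\|z^\alpha(t)\|_{L^2}\le\|y^\alpha(t)\|_{L^2}$, $\|z^\alpha_x(t)\|_{L^2}\le\|y^\alpha_x(t)\|_{L^2}$; and since $\zeta:=z^\alpha-y^\alpha$ vanishes at $x=0,L$ and solves $\zeta-\alpha^2\zeta_{xx}=\alpha^2y^\alpha_{xx}$, testing with $\zeta$ yields $\|z^\alpha_{xx}(t)\|_{L^2}=\alpha^{-2}\|\zeta(t)\|_{L^2}\le\|y^\alpha_{xx}(t)\|_{L^2}$. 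So $z^\alpha$ is $\alpha$-uniformly bounded in $L^\infty\cap C^0([0,T];H^1_0)\cap L^2(0,T;H^2)$ by $\Lambda_0(\|y_0\|_{H^1_0})$. Moreover, evaluating the elliptic equation at $x=0,L$ shows that $z^\alpha(t)$ and $z^\alpha_{xx}(t)$ vanish on the boundary whenever $y^\alpha(t)$ does, while $z^\alpha_x(t)$ then solves a Neumann problem for $Id-\alpha^2\partial^2_{xx}$ with datum $y^\alpha_x(t)$; hence $\|z^\alpha(t)\|_{C^k([0,L])}\le\|y^\alpha(t)\|_{C^k([0,L])}$ for $k=0,1,2$, uniformly in $\alpha$.

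Next come the two bootstrap steps. Fix $t_1:=T/6$, $t_2:=T/3=:T^*$ and $\theta_1,\theta_2\in C^\infty([0,T])$, $0\le\theta_j\le1$, with $\theta_1\equiv1$ on $[t_1,T]$, $\theta_1\equiv0$ near $0$, and $\theta_2\equiv1$ on $[t_2,T]$, $\theta_2\equiv0$ on $[0,t_1]$; then $u_j:=\theta_jy^\alpha$ solves the homogeneous Dirichlet heat equation $\partial_tu_j-\partial_{xx}u_j=\theta_jg^\alpha+\theta_j'y^\alpha$, $u_j(0,\cdot)=0$. For $j=1$: by Step 1, $g^\alpha\in L^2(0,T;H^1_0)$ (its trace vanishes with that of $z^\alpha$, and $\partial_xg^\alpha=-z^\alpha_xy^\alpha_x-z^\alpha y^\alpha_{xx}$ lies in $L^2(0,T;L^2)$ since $z^\alpha_x\in L^2(0,T;L^\infty)$, $y^\alpha_x\in L^\infty(0,T;L^2)$, $z^\alpha\in L^\infty$, $y^\alpha_{xx}\in L^2(0,T;L^2)$), while $\theta_1'y^\alpha\in C^0([0,T];H^1_0)$; maximal $L^2$-regularity for the heat semigroup gives $u_1\in C^0([0,T];H^2\cap H^1_0)\cap L^2(0,T;H^3)$, i.e. $y^\alpha\in C^0([t_1,T];H^2)\cap L^2(t_1,T;H^3)$, with a bound that is a continuous function of $\|y_0\|_{H^1_0}$ vanishing at $0$. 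For $j=2$: on $[t_1,T]$ we now have $y^\alpha\in C^0(H^2)\cap L^2(H^3)$ and, by Step 1, $z^\alpha\in C^0(H^2)$ with $z^\alpha_x\in C^0(L^\infty)$, all $\alpha$-uniformly, so each term of $\partial_{xx}g^\alpha=-z^\alpha_{xx}y^\alpha_x-2z^\alpha_xy^\alpha_{xx}-z^\alpha y^\alpha_{xxx}$ lies in $L^2(t_1,T;L^2)$; hence $\theta_2g^\alpha\in L^2(0,T;H^2\cap H^1_0)$ (trace again zero) and $\theta_2'y^\alpha\in C^0([0,T];H^2\cap H^1_0)$, and maximal regularity gives $u_2\in C^0([0,T];H^3(0,L))$. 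Therefore $y^\alpha\in C^0([T^*,T];H^3(0,L))\hookrightarrow C^0([T^*,T];C^2([0,L]))$, with norm bounded by $C(L,T)\,\Psi(\|y_0\|_{H^1_0})$ for a continuous non-decreasing $\Psi$ with $\Psi(0)=0$; taking $\Lambda:=C(L,T)\Psi$ proves the proposition.

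The main obstacle is exactly the uniformity in $\alpha$: estimating $z^\alpha$ through the resolvent $(Id-\alpha^2\partial^2_{xx})^{-1}$ in a naive way costs powers of $\alpha^{-1}$ (for instance $\|z^\alpha_x\|_\infty\le C\alpha^{-1}\|y^\alpha\|_\infty$). The resolution, as above, is that the homogeneous Dirichlet conditions in \eqref{re1}, inherited by $y^\alpha(t)$ at every time, force the once- and twice-differentiated filter equations to carry zero boundary data, turning them into genuine Neumann/Dirichlet resolvent problems on which the maximum principle and the $L^2$ energy identity yield the $\alpha$-free contractions $\|z^\alpha(t)\|_{C^k}\le\|y^\alpha(t)\|_{C^k}$ and $\|z^\alpha(t)\|_{H^k}\le\|y^\alpha(t)\|_{H^k}$. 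Apart from this, the only other point needing care is keeping track of the boundary compatibility $g^\alpha|_{\{0,L\}}=0$, so that the second bootstrap lands in $H^3\hookrightarrow C^2$ rather than merely in $H^2$, and checking that the smallness $\|y_0\|_{H^1_0}\to0$ propagates through each estimate — which it does, since every bound produced is a continuous function of $\|y_0\|_{H^1_0}$ vanishing at the origin.
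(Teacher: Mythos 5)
Your argument is correct, and it reaches the same two regularity gains as the paper (heat equation with right-hand side $-z^\alpha y^\alpha_x$, bootstrap twice, land in $C^0([T^*,T];H^3)\hookrightarrow C^0([T^*,T];C^2)$ with all constants $\alpha$-free), but the technical route is genuinely different. The paper extracts good time slices $t_1,t_2\in(0,T/2)$ by averaging the $L^2$-in-time bounds of Proposition \ref{re10}, restarts the heat equation at those times, and invokes the improved parabolic regularity theorems of Evans; this forces it to verify compatibility conditions at the restart times (e.g.\ $g(t_2,\cdot)+(y^\alpha)_{xx}(t_2,\cdot)=y^\alpha_t(t_2,\cdot)\in H^1_0$) and passes through $L^2(t_2,T;H^4)\cap H^1(t_2,T;H^2)\cap H^2(t_2,T;L^2)$ before concluding $C^0([t_2,T];H^3)$. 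You instead multiply by smooth time cutoffs so the relocalized problems have zero initial data and apply maximal $L^2$-regularity (equivalently, the spectral shift $v=A^{1/2}u$, $v=Au$ for the Dirichlet Laplacian), trading the compatibility-at-a-slice conditions for the requirement that the forcings $\theta_j g^\alpha+\theta_j'y^\alpha$ lie in $L^2(0,T;D(A^{1/2}))$ resp.\ $L^2(0,T;D(A))$, i.e.\ have zero boundary trace — which you verify because $y^\alpha$ and hence $z^\alpha$ vanish at $x=0,L$. A further difference, and a genuine gain in transparency, is that you make the $\alpha$-uniformity explicit through the contraction estimates $\|z^\alpha(t)\|_{L^2}\le\|y^\alpha(t)\|_{L^2}$, $\|z^\alpha_x(t)\|_{L^2}\le\|y^\alpha_x(t)\|_{L^2}$, $\|z^\alpha_{xx}(t)\|_{L^2}\le\|y^\alpha_{xx}(t)\|_{L^2}$ (via $\zeta=z^\alpha-y^\alpha$) and their sup-norm analogues obtained from the Dirichlet/Neumann maximum principles for $Id-\alpha^2\partial^2_{xx}$; the paper uses such bounds only implicitly when estimating $g=z^\alpha y^\alpha_x$ in terms of $y^\alpha$ alone. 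Your fixed choice $T^*=T/3$ and the propagation of the smallness $\Lambda(s)\to0$ as $s\to0^+$ through each step are both fine, so the proposal stands as a complete alternative proof.
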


\begin{proof}
	We will divide the proof in several steps. Throughout the proof, all the constants are independent of $\alpha$.
	
	\
       
         	\noindent{\bf Step 1: Strong estimates in $(0,T/2)$.} Since $y_0\in H^1_0(0,L)$, $f \equiv 0$ and $v_l\equiv v_r \equiv 0$, Proposition~\ref{re10} implies the
	existence and uniqueness of a solution $(y^\alpha,z^\alpha)$ to \eqref{re1} satisfying~\eqref{mg6} and \eqref{mg5}. 
	In particular, we have from \eqref{mg5} that
      \begin{equation*}\label{re12}
      \|y^{\alpha}\|_{L^2(H^2\cap H^1_0)} \leq C_1\|y_0\|_{H_0^1}e^{C\|y_0\|^2_{H_0^1}}.
      \end{equation*}      
      
      Consequently, there exists $ t_1\in( 0,T/2)$ such that
	\begin{equation*}\label{est_t_1}
		  \|y^{\alpha}(t_1,\cdot)\|_{H^2\cap H^1_0} \leq \sqrt{\dfrac{2}{T}}C_1\|y_0\|_{H^1_0}e^{C\|y_0\|^2_{H_0^1}}.
	\end{equation*}

\ 

\

         \noindent{\bf Step 2: Estimates in $(t_1,T/2)$.} Let us set $y_1:=y^{\alpha}(t_1,\cdot)$, $g := z^\alpha y^\alpha_x$. Then, we can easily check that $y^\alpha$ 
	is the unique solution to the heat equation:
	         \begin{equation}\label{mg7}
	         \left\{
	         \begin{array}{lll}
	          y^\alpha_t - y^\alpha_{xx} = g 				&\mbox{in}& (t_1,T)\times (0,L),\\
	          y^\alpha(\cdot,0) = y^\alpha(\cdot,L) = 0 		&\mbox{in} & (t_1,T),\\
	          y^\alpha(t_1,\cdot) = y_1 					&\mbox{in} & (0,L).
	         \end{array}
	         \right.
	         \end{equation}
	         From the regularity of $y^\alpha$ and $z^\alpha$, we have $g\in L^2(0, T; H_0^1(0,L))\cap  H^1(0, T; H^{-1}(0,L))$ and	         
	         \begin{align*}
	         \|g\|_{L^2(H_0^1)} + \|g_t\|_{L^2(H^{-1})}
	         &\leq\, 
	         C\|y^{\alpha}\|_{L^{\infty}( H_0^1)}(\|y^{\alpha}\|_{L^2(H^2)} + \|y_t^{\alpha}\|_{L^2(L^2)})\\
	         & \leq \,Ce^{C\|y_0\|_{H_0^1}^2}\|y_0\|_{H_0^1}^2.
	         \end{align*}
	        
	         Using this estimate, the fact that $g\in C^0(0,T;L^2(0,L))$ (see \cite[Ch. $5$, Thm.~3]{evans}), \eqref{mg7} and the parabolic regularity result  \cite[Ch. $3$, Thm. $5$]{evans}, we find that
	         \[
	            y^{\alpha} \in L^{\infty}(t_1, T; H^2(0,L)),\,\, y_t^{\alpha} \in L^{\infty}(t_1, T; L^2(0,L))\cap L^2(t_1, T; H_0^1(0,L))\cap 
	            H^1(t_1, T; H^{-1}(0,L))
	         \]
            and, in the time interval $(t_1,T)$,
            \begin{equation}\label{mg8}
              \begin{alignedat}{2}
              \|y^{\alpha}\|_{L^{\infty}(H^2)} + \|y^{\alpha}_t\|_{L^{\infty}(L^2)\cap L_1^2(H_0^1)} + \|y^{\alpha}_{tt}\|_{L^2(H^{-1})} \leq&~ C\left(\|g\|_{L^2(H_0^1)\cap H^1(H^{-1})} + \|y_1\|_{H^2} \right)\\
              \leq&~ {1\over 2}\Lambda_1(\|y_0\|_{H_0^1})          \end{alignedat}
             \end{equation}
              where 
              $$\Lambda_1 (\|y_0\|_{H_0^1}) = 2Ce^{C\|y_0\|_{H_0^1}^2}\|y_0\|_{H_0^1}(1 + \|y_0\|_{H_0^1}). 
              $$
             From \eqref{mg7}, we have that 
             \begin{equation*}\label{mg19}
             \left\{
             \begin{array}{l}
             -y^\alpha_{xx}(t,\cdot) = g(t,\cdot) - y^\alpha_t(t,\cdot)\\
             y^\alpha(t,0) = y^\alpha(t,L) = 0
             \end{array}
             \right.
             \end{equation*}
             for $t$ a.e in $(t_1,T)$. Thus, using  \eqref{mg8} and elliptic regularity results, (see \cite[Ch. $6$, Thm. $5$]{evans}), we deduce that $y^\alpha\in L^2(t_1,T;H^3(0,L))$ and
             \begin{equation*}\label{mg9}
             \|y^\alpha\|_{L^2(t_1,T;H^3(0,L))} \leq {1\over 2}\Lambda_1 (\|y_0\|_{H_0^1}).
             \end{equation*}  
             
             We also deduce that, for some $ t_2\in( t_1,T/2)$, one has
             	\begin{equation*}\label{mg10}
		  \|y^{\alpha}_t(t_2,\cdot)\|_{H^1_0}+ \|y^{\alpha}(t_2,\cdot)\|_{H^3\cap H^1_0} \leq \sqrt{\dfrac{2}{T-2t_1}}\Lambda_1 (\|y_0\|_{H_0^1}).
	\end{equation*}
             
             \

             \noindent{\bf Step 3: Estimates in $(t_2,T/2)$.} Let us set $y_2:=y^{\alpha}(t_2,\cdot)$. Note that
                \begin{align*}
	         \|g\|_{L^2(t_1,T;H^2(0,L))\cap H^1(t_1,T;L^2(0,L))}
	         &\leq\, 
	         C\|y^{\alpha}\|_{L^{\infty}(t_1,T;H_0^1(0,L))}\|y^{\alpha}\|_{L^2(t_1,T;H^3(0,L))\cap H^1(t_1,T;H_0^1(0,L))}\\
	         & \leq \,C\|y_0\|_{H_0^1}\Lambda_1 (\|y_0\|_{H_0^1})\,e^{C\|y_0\|_{H_0^1}^2}
	         \end{align*}
             and the needed compatibility conditions for regularity results holds:
             $$
             g(t_2,\cdot) + (y_2)_{xx}(t,\cdot) = y^\alpha_t(t_2,\cdot) \in H_0^1(0,L).
             $$
             Using \cite[Ch. $7$, Thm. $6$]{evans}, we get that 
             $$
             y^\alpha\in L^2(t_2,T;H^4(0,L))
             \cap H^1(t_2,T;H^2(0,L))
             \cap H^2(t_2,T;L^2(0,L))
             $$
             and, moreover, in the time interval $(t_2,T)$
             \begin{equation}\label{mg12}
             \begin{alignedat}{2}
             \|y^\alpha\|_{L^2(H^4)
             \cap H^1(H^2)
             \cap H^2(L^2)}
              \leq&~ C \left(\|g\|_{L^2(H^2)\cap H^1(L^2)} + \|y_2\|_{H^3}\right)\\
             \leq & \Lambda_2 (\|y_0\|_{H_0^1}),
             \end{alignedat}
             \end{equation}
        where
        $$
        \Lambda_2 (\|y_0\|_{H_0^1}):= C\left(1+\|y_0\|_{H_0^1}e^{C\|y_0\|_{H_0^1}^2}\right)\Lambda_1 (\|y_0\|_{H_0^1}).
        $$

	
               \

              \

             \noindent{\bf Step 4: Conclusion.}  Finally, the result in \cite[Ch. 5, Thm. 4]{evans} applied to \eqref{mg12} leads to the regularity $C^0([t_2,T]; H^3(0,L))$ for  $y^\alpha$. Therefore, the conclusion follows from Sobolev's embedding, taking $T^* = t_2$ and $\Lambda (\|y_0\|_{H_0^1})=\Lambda_2 (\|y_0\|_{H_0^1})$. 
             
             
           
  	       \end{proof} 
	
		\begin{remark}{\rm Proposition \ref{re9} is also true when $y_0\in L^\infty(0,L)$. Indeed, we can start using Proposition \ref{re10} that guarantees the
	existence and uniqueness of a solution $(y^\alpha,z^\alpha)$ to \eqref{re1} satisfying~\eqref{mg66} and \eqref{mg55}. 
	In particular, we have from \eqref{mg55} that
      \begin{equation*}\label{re12222}
      \|y^{\alpha}\|_{L^2(H^1_0)} \leq C_1\|y_0\|_{\infty}e^{C\|y_0\|^2_{\infty}}.
      \end{equation*}      
      Therefore, there exists $ t_1\in( 0,T/2)$ such that
	\begin{equation*}\label{est_t_111}
		  \|y^{\alpha}(t_1,\cdot)\|_{H^1_0} \leq \sqrt{\dfrac{2}{T}}C_1\|y_0\|_{\infty}e^{C\|y_0\|^2_{\infty}}.
	\end{equation*}
    Then, we can achieve arguing as in the proof of Proposition \ref{re9}.
}
	\end{remark}
    

\subsection{Uniform approximate controllability}\label{approximate}

	In this section, the goal is to prove the following approximate controllability result starting from sufficiently smooth initial data:
  \begin{prop}\label{re}
  Let $y_0, y_f\in C^2([0,L])$ be given. There exist
  positive constants $\tau_*$ and $K>0$, independent of $\alpha$,
  such that, for any $\tau\in (0,\tau_*]$,  
  there exist $p^{\alpha}\in C^0([0,\tau])$, 
  $(v^{\alpha}_l,v^{\alpha}_r)\in H^{3/4}(0,\tau)\times H^{3/4}(0,\tau)$ and associated states $(y^\alpha,z^\alpha)$ with the following regularity
    \begin{equation} \label{sp25}
     \left\{
     \begin{array}{l}
     y^\alpha \in L^2(0,\tau; H^2(0,L))\cap H^1(0,\tau;L^2(0,L))\cap C^0([0,\tau];H^1(0,L))\\
     z^\alpha\in L^2(0,T; H^4(0,L))\cap H^1(0,\tau;L^2(0,L))\cap C^0([0,\tau]; H^3(0,L)),    
     \end{array}
     \right.
     \end{equation}
   satisfying
  
     \begin{equation}
     \left\{
     \begin{array}{lll}
     y^\alpha_t - y^\alpha_{xx} + z^\alpha y^\alpha_x = p^{\alpha}(t) &\mbox{in}& (0,\tau)\times (0,L),\\
     z^\alpha - \alpha^2 z^\alpha_{xx} = y^\alpha &\mbox{in}& (0,\tau)\times (0,L),\\  \label{sp26}
     z^\alpha(\cdot,0) = y^\alpha(\cdot,0) = v^{\alpha}_l &\mbox{on}&(0,\tau),\\ 
     z^\alpha(\cdot,L) =  y^\alpha(\cdot,L) = v^{\alpha}_r  &\mbox{on}& (0,\tau),\\
     y^\alpha(0,\cdot) = y_0 &\mbox{in}&(0,L)\\
     \end{array}
     \right.
     \end{equation}
     and, moreover,
     \begin{equation}\label{sp27}
     \|y^\alpha(\tau,.)-y_f\|_{H^1(0,L)}\leq K\sqrt{\tau}
     \end{equation}
     and
\begin{equation*}\label{sp277}
    \|p^{\alpha}\|_{C^0([0,T])}+
	\|(v_l^{\alpha},v_r^{\alpha})\|_{H^{3/4}([0,T];\mathbb{R}^2)}\leq C \quad \forall \alpha>0.
\end{equation*}
    \end{prop}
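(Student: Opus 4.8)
The idea is to build the desired trajectory on $(0,\tau)$ as a small perturbation of an \emph{explicit} smooth solution obtained by ignoring the $z$--coupling and the viscosity on this short time scale, and to control the remainder by energy estimates whose constants depend only on $\|y_0\|_{C^2}$, $\|y_f\|_{C^2}$, $L$ and $T$ (hence are independent of $\alpha$). Concretely, since $y_0,y_f\in C^2([0,L])$, I first choose a distributed control $p^\alpha$ and boundary data $(v_l^\alpha,v_r^\alpha)$ that depend only on $y_0$ and $y_f$ (not on $\alpha$): for instance, let $w(t,x)$ be the smooth function interpolating linearly in time between $y_0$ and $y_f$, $w(t,x)=(1-t/\tau)y_0(x)+(t/\tau)y_f(x)$, set $v_l^\alpha(t):=w(t,0)$, $v_r^\alpha(t):=w(t,L)$, and let $p^\alpha(t)$ be chosen to absorb the mean part of the forcing (or simply $p^\alpha\equiv0$, keeping the remark that $p^\alpha$ is $\alpha$--independent). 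One checks immediately that $v_l^\alpha,v_r^\alpha\in H^{3/4}(0,\tau)$ with norms bounded by $C(\|y_0\|_{C^2}+\|y_f\|_{C^2})$ uniformly in $\tau\le\tau_*$ and in $\alpha$, and the compatibility conditions $v_l^\alpha(0)=y_0(0)$, $v_r^\alpha(0)=y_0(L)$ hold.

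\textbf{Main step: short-time estimate on the remainder.} With this choice of controls, Proposition~\ref{re10} gives a unique solution $(y^\alpha,z^\alpha)$ to \eqref{sp26} with the regularity \eqref{sp25}, together with the bounds \eqref{mg5}. The key is to estimate $y^\alpha(\tau,\cdot)-y_f$ in $H^1(0,L)$. Write $r^\alpha:=y^\alpha-w$; then $r^\alpha(0,\cdot)=0$, $r^\alpha$ vanishes on the lateral boundary, and $r^\alpha$ solves a linear parabolic equation with right-hand side $w_{xx}-z^\alpha y^\alpha_x+p^\alpha-w_t$, which is bounded in $L^2(0,\tau;L^2)$ by a constant depending only on $\|y_0\|_{C^2},\|y_f\|_{C^2}$ (using \eqref{mg5} to bound $\|z^\alpha\|_\infty$ and $\|y^\alpha_x\|_{L^2(L^2)}$ uniformly in $\alpha$). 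A standard energy estimate for $r^\alpha$ — multiply by $r^\alpha$ and by $-r^\alpha_{xx}$, use Gronwall — then yields
\begin{equation*}
\|r^\alpha\|_{C^0([0,\tau];H^1_0(0,L))}\le C\sqrt{\tau},
\end{equation*}
with $C=C(\|y_0\|_{C^2},\|y_f\|_{C^2},L,T)$ independent of $\alpha$. Since $r^\alpha(\tau,\cdot)=y^\alpha(\tau,\cdot)-y_f$, this is exactly \eqref{sp27} with $K=C$, and the control bound $\|p^\alpha\|_{C^0}+\|(v_l^\alpha,v_r^\alpha)\|_{H^{3/4}}\le C$ follows from the explicit construction.

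\textbf{The main obstacle.} The delicate point is obtaining the energy estimate for $r^\alpha$ with a constant genuinely uniform in $\alpha$: the term $z^\alpha y^\alpha_x$ couples back to $y^\alpha=r^\alpha+w$, so one must first close an a priori bound for $y^\alpha$ itself (in $L^\infty((0,\tau)\times(0,L))$ and in $L^2(0,\tau;H^1_0)$) that does not degenerate as $\alpha\to0$. This is precisely what the uniform estimates \eqref{mg5}–\eqref{mg6} in Proposition~\ref{re10} provide — note there $\|z^\alpha\|_\infty\le M_T$ independently of $\alpha$ by the maximum principle for $z^\alpha-\alpha^2z^\alpha_{xx}=y^\alpha$ — so the coupling is harmless once one feeds in those bounds. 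A secondary subtlety is checking the regularity \eqref{sp25} and the compatibility conditions needed to invoke Proposition~\ref{re10}; these are routine given the smoothness of $w$. Finally, one fixes $\tau_*$ small enough that $\tau\le\tau_*$ ensures $C\sqrt{\tau}$ is within whatever tolerance later arguments require, though for the statement as written any $\tau_*\le T/2$ works.
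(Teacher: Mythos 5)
Your interpolant $w(t,x)=(1-t/\tau)y_0(x)+(t/\tau)y_f(x)$ has $w_t=\tau^{-1}\bigl(y_f(x)-y_0(x)\bigr)$, which is of size $O(1/\tau)$ and, crucially, is \emph{not} constant in $x$, so it cannot be absorbed into the admissible distributed control $p^{\alpha}(t)$ (which must be space-independent). Hence the source term in your equation for $r^\alpha=y^\alpha-w$ contains $-w_t$ with $\|w_t\|_{L^2(0,\tau;L^2)}=\tau^{-1/2}\|y_f-y_0\|_{2}\to\infty$ and $\|w_t\|_{L^1(0,\tau;L^2)}=\|y_f-y_0\|_{2}=O(1)$; the energy/Gronwall argument then yields only $\|r^\alpha(\tau,\cdot)\|\lesssim \|y_f-y_0\|$, i.e.\ $O(1)$, not $O(\sqrt{\tau})$. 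This is not a technical loss: with $p^\alpha$ space-independent and boundary controls of size $O(1)$, a parabolic solution simply cannot change by an $O(1)$ amount in the interior of $(0,L)$ in a time $\tau\to 0$, so $y^\alpha(\tau,\cdot)\approx y_0+\,$(spatial constant) and \eqref{sp27} fails whenever $y_f-y_0$ is nonconstant. The uniformity in $\alpha$, which you single out as the main obstacle, is actually the easy part here.

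The paper circumvents exactly this obstruction by the return method: it takes $p^{\alpha}=\lambda^{\tau}_t$ with $\lambda^{\tau}(t)=\tau^{-1}\lambda(t/\tau)$ and $\|\lambda\|_{L^1(0,1/2)}>L$, so the control generates a drift of size $O(1/\tau)$ that sweeps the whole interval $(0,L)$ within time $\tau$. Along this drift the \emph{inviscid} problem is exactly controllable from $u_0$ to $u_f$ with states bounded in $C^0([0,\tau];C^2([0,L]))$ uniformly in $\alpha$ and $\tau$ (Lemma~\ref{re8}, built on Remark~\ref{rm:1}); the viscous and coupling corrections are then collected in a remainder $r^{\alpha,\tau}$ driven by sources like $u^{\alpha,\tau}_{xx}$ that are $O(1)$ in $L^\infty(0,\tau;L^2)$, whence $\|r^{\alpha,\tau}\|_{C^0([0,\tau];H^1)}\leq K\sqrt{\tau}$ by the energy estimate. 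To repair your argument you would need to replace the linear-in-time interpolant by a reference trajectory that actually solves the (inviscid) controlled system exactly, which is precisely the content of the paper's construction.
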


	In order to prove this result, let us introduce $\lambda\in C_0^1(0,1)$ with
	$\|\lambda\|_{L^1(0,1/2)} > L$ and~$\lambda(t)=\lambda(1-t)$ for all $t\in [0,1]$.
	Let us set $\lambda^{\tau}(t) := \frac{1}{\tau}\lambda\left(\frac{t}\tau\right)$ for all $t \in [0,\tau]$.
   
    The following two results hold:        
     \begin{lem} \label{re8}
    Let $M > 0$ be a positive constant. Then, if $u_0, u_f\in C^2([0,L])$ and
      \begin{align}\label{apr4}
     \max\{\|u_0\|_{C^2([0,L])},\|u_f\|_{C^2([0,L])}\}\leq M,  
      \end{align}
    there exists $\tau_0\in (0,1)$ such that for every 
    $\tau\in (0,\tau_0]$ we can find controls $v_l^{\alpha,\tau},v_r^{\alpha,\tau}$ in $ C^2([0,\tau])$ 
    and associated states $u^{\alpha,\tau}, w^{\alpha,\tau}$ in $C^2([0,\tau]\times ([0,L]))$, satisfying
     \begin{equation}
     \left\{
     \begin{array}{lll}
     u^{\alpha,\tau}_t + (\lambda^{\tau}(t) + w^{\alpha,\tau})u^{\alpha,\tau}_{x}  = 0 &$\mbox{in}$& (0,\tau)\times (0,L),\\
     w^{\alpha,\tau} - \alpha^2 w^{\alpha,\tau}_{xx} = u^{\alpha,\tau} &$\mbox{in}$& (0,\tau)\times (0,L),\\  
     u^{\alpha,\tau}(\cdot,0) = w^{\alpha,\tau}(\cdot,0) = v_l^{\alpha,\tau} &$\mbox{in}$& (0,\tau),\\
     u^{\alpha,\tau}(\cdot,L) = w^{\alpha,\tau}(\cdot,L) = v_r^{\alpha,\tau} &$\mbox{in}$& (0,\tau),\\   \label{re6}
     u^{\alpha,\tau}(0,\cdot) = u_0 &$\mbox{in}$&(0,L),\\
     u^{\alpha,\tau}(\tau,\cdot) = u_f &$\mbox{in}$&  (0,L).
     \end{array}
      \right.
     \end{equation}  
    Furthermore, there exists $C > 0$, independent of $\alpha$ and $\tau$, such that
      \begin{equation}\label{re7}
        \|u^{\alpha,\tau}\|_{C^0([0,\tau];C^2([0,L]))}\leq CM. 
      \end{equation}
\end{lem}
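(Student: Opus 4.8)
The plan is to reduce the statement to a rescaled version of Proposition \ref{sp2} (and the $C^2$ refinement in Remark \ref{rm:1}), exploiting the time-reversibility and scale invariance of the transport structure exactly as in the global controllability argument of Section \ref{sec:inviscid}. First I would observe that $\lambda^\tau \in \Lambda_{L,\tau,1}$: indeed $\|\lambda^\tau\|_{L^1(0,\tau)}=\|\lambda\|_{L^1(0,1)}\geq 2\|\lambda\|_{L^1(0,1/2)}>2L>L$, using the symmetry $\lambda(t)=\lambda(1-t)$. Thus the hypotheses needed to invoke the local null controllability construction with transport speed $\lambda^\tau$ are in place on the interval $(0,\tau)$.

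Next, I would split the target time $\tau$ into two halves and run the construction forward on $[0,\tau/2]$ and backward on $[\tau/2,\tau]$. On $[0,\tau/2]$, apply Remark \ref{rm:1} (with $\lambda$ replaced by $\lambda^\tau$ restricted appropriately, after a further affine time rescaling to the reference interval) to steer $u_0$ to $0$, obtaining $(u^{\alpha},w^{\alpha})\in C^2$ and boundary controls in $C^2$, together with the estimate \eqref{eq:c2}. On $[\tau/2,\tau]$, use the time-reversibility: solving $u_t+(\lambda^\tau+w)u_x=0$ backward from $u_f$ to $0$ is equivalent, after the change of variables $t\mapsto \tau-t$ and $x\mapsto L-x$ (which turns $\lambda^\tau$ into itself because $\lambda(t)=\lambda(1-t)$), to the same forward null-controllability problem for $u_f(L-\cdot)$. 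Gluing the two pieces at $t=\tau/2$, where both equal $0$ together with all derivatives up to order $2$, produces $u^{\alpha,\tau}, w^{\alpha,\tau}\in C^2([0,\tau]\times[0,L])$ and $v_l^{\alpha,\tau}, v_r^{\alpha,\tau}\in C^2([0,\tau])$ satisfying \eqref{re6}.

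For the uniform bound \eqref{re7}, I would track the constants through the rescalings. The smallness threshold $\delta$ in Remark \ref{rm:1} is independent of $\alpha$, but a priori depends on the transport data; the key point is that after rescaling time to the reference interval of length $1$, the relevant object is $\lambda$ itself (fixed, independent of $\tau$), so the constant $C$ and the threshold are $\tau$-independent as well. The smallness of $u_0, u_f$ required by Remark \ref{rm:1} is not assumed in the lemma — instead one absorbs the factor $M$: by linearity of the transport equation in $u$ (for fixed transport velocity, which here is itself constructed to be small), one rescales $u_0\mapsto u_0/(KM)$ for a suitable $K$, applies the small-data result, and multiplies back; the resulting state then obeys $\|u^{\alpha,\tau}\|_{C^0([0,\tau];C^2([0,L]))}\leq CM$ with $C$ depending only on $L$ and $\lambda$. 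Here one must be slightly careful: the equation is \emph{not} linear because $w$ depends on $u$ through the elliptic filter, so the scaling argument must be run at the level of the fixed-point map $\mathcal{F}$ — one checks that the ball of radius proportional to $M$ is invariant and the contraction estimate of Claim \ref{es29} survives, exactly as in Proposition \ref{sp2}, with constants depending on $M$, $L$, $\lambda$ but not on $\alpha$ or $\tau$.

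The main obstacle I anticipate is precisely this last point: ensuring that the threshold $\tau_0$ and the constant $C$ in \eqref{re7} are genuinely uniform in $\tau$ (and in $\alpha$). The $\alpha$-uniformity is inherited directly from Propositions \ref{sp} and \ref{sp2} and the maximum-principle estimates \eqref{sp4}, \eqref{sp21}, so the real work is the $\tau$-dependence. The cleanest route is to perform the change of variables $t = \tau s$, $\widetilde u(s,x) := u(\tau s, x)$ at the very beginning, which transforms \eqref{re6} on $(0,\tau)$ into the \emph{same} system on $(0,1)$ with transport speed $\lambda$ (no $\tau$ anywhere) and with a rescaled elliptic parameter — but since all the elliptic estimates used are uniform in the parameter, nothing degrades. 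Then Remark \ref{rm:1} applies once and for all on $(0,1)$, yielding $\tau$-independent constants, and undoing the rescaling gives \eqref{re7}. I would present the proof in this order: rescale to the unit interval; verify $\lambda\in\Lambda_{L,1,1}$ via the symmetry; apply the forward/backward null-controllability construction and glue; absorb $M$ through the fixed-point scaling; undo the rescaling to recover \eqref{re6}–\eqref{re7}.
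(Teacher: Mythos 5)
Your overall architecture is the paper's: split $[0,\tau]$ at $\tau/2$, steer $u_0$ to $0$ forward and $u_f$ to $0$ backward via $t\mapsto\tau-t$, $x\mapsto L-x$ (using $\lambda(t)=\lambda(1-t)$), and glue at the common zero state. However, there is a genuine gap in the step where you handle data of size $M$ rather than size $\delta$. Your claim that the change of variables $t=\tau s$ transforms \eqref{re6} into ``the same system on $(0,1)$ with transport speed $\lambda$ and no $\tau$ anywhere'' is false: setting $\widetilde u(s,x)=u(\tau s,x)$ yields $\widetilde u_s+(\lambda(s)+\tau\widetilde w)\widetilde u_x=0$, so a factor $\tau$ survives in front of the filtered velocity. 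To remove it you must \emph{also} rescale the amplitude, i.e.\ work with $\widetilde y:=\tau\widetilde u$, $\widetilde z:=\tau\widetilde w$, which solve exactly \eqref{sp1} with datum $\tau u_0$. This combined time--amplitude scaling is the whole point: it converts the datum of size $M$ into one of size $\tau M$, so the paper simply chooses $\tau_0$ with $\tau_0 M\leq\delta$ and invokes the small-data $C^2$ result of Remark \ref{rm:1} directly; undoing the scaling, $\|u^{\alpha,\tau}\|_{C^0C^2}=\tau^{-1}\|\widetilde y^\alpha\|_{C^0C^2}\leq\tau^{-1}C\|\tau u_0\|_{C^2}\leq CM$, which is \eqref{re7}. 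Note that in the lemma $\tau_0$ is allowed to depend on $M$; you do not need (and cannot get) a threshold independent of the data size.

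Your proposed substitutes for this mechanism do not work. Rescaling the datum by $1/(KM)$ and ``multiplying back'' fails because the system is not invariant under amplitude scaling alone: if $u\mapsto cu$ then the transport velocity becomes $\lambda^\tau+w/c$, not $\lambda^\tau+w$. Running the fixed point in a ball of radius proportional to $M$ also fails, because the radius $R=\eta/(C_2T)$ in Proposition \ref{sp2} is an absolute geometric constant: the bound $\|z^*\|_{C^0(C^1_b)}\leq\eta/T$ in \eqref{sp141} is what guarantees the perturbed characteristics still exit $[0,L]$ (inequality \eqref{sp17}), and this cannot be relaxed to a ball of radius $CM$ for large $M$. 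So the only route to \eqref{re7} for arbitrary $M$ is the $\tau$-scaling with $\tau_0M\leq\delta$, which is the one ingredient your write-up omits.
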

\begin{proof}
	First, thanks to the fact that $\|\lambda\|_{L^1(0,1/2)} > L$ and Remark \ref{rm:1}, we know that  there exists $\delta > 0$ (independent of $\alpha$) such that, for any initial datum in a ball of  $C^2([0,L])$ centered at origin and radius $\delta$, there exists a solution 
	to \eqref{sp1} belonging to  
	$C^2([0,1/2]\times [0,L])$ satisfying
	\eqref{eq:c2}.

	   Let us now take $\tau_0 \in (0,1)$ such that $\tau_0 M \leq \delta$. 
	      Then, according to the previous construction, for each $\tau\in (0,\tau_0]$ there exist functions $(\widetilde{y}^\alpha,\widetilde{z}^\alpha)$, $(\widehat{y}^\alpha,\widehat{z}^\alpha) \in C^2([0,1/2]\times [0,L];\mathbb{R}^2))$, solutions to \eqref{sp1} and satisfying
 $\widetilde{y}^\alpha(0,x)=\tau u_0(x)$ and
	      $\widehat{y}^\alpha(0,x) = \tau u_f(L - x)$, for all $x\in [0,L]$, 
	      and \eqref{eq:c2}. 

	   Then, one defines the states 
	       \begin{equation*}
	       u^{\alpha,\tau}(t,x) :=
	        \left\{
	        \begin{array}{lll}
	        \tau^{-1}\widetilde{y}^\alpha(\tau^{-1}t,x) &\mbox{in}& [0,\tau/2]\times [0,L],\\
	        \tau^{-1}\widehat{y}^\alpha(\tau^{-1}(\tau - t),L-x) &\mbox{in} & [\tau/2,\tau]\times [0,L]
	        \end{array}
	        \right.
	       \end{equation*}
	        and
	       \begin{equation*}
	       w^{\alpha,\tau}(t,x) :=
	        \left\{
	        \begin{array}{lll}
	        \tau^{-1}\widetilde{z}^\alpha(\tau^{-1}t,x) &\mbox{in}& [0,\tau/2]\times [0,L],\\
	       \tau^{-1}\widehat{z}^\alpha(\tau^{-1}(\tau - t),L-x) &\mbox{in} & [\tau/2,\tau]\times [0,L],
	        \end{array}
	        \right.
	       \end{equation*}
	        that satisfy $(u^{\alpha,\tau},w^{\alpha,\tau})\in C^2([0,\tau]\times[0,L];\mathbb{R}^2)$
	       and the associated boundary controls 
	       \[
	       v^{\alpha,\tau}_l(t) := u^{\alpha,\tau}(t,0)
	       \quad
	      \text{and}
	       \quad
	       v^{\alpha,\tau}_r(t) :=u^{\alpha,\tau}(t,L).
	       \]
	   Since 
	   $\lambda(\tau^{-1}t) \equiv \lambda(\tau^{-1}(\tau-t))$, 
	   the couple $(u^{\alpha,\tau},w^{\alpha,\tau})$
	   satisfies~\eqref{re6} and \eqref{re7}.
    \end{proof} 
  
     \begin{lem} 
     Assume that $M > 0$, $u_0, u_f\in C^2([0,L])$ satisfy \eqref{apr4} and $\tau_0$ is furnished by Lemma  \ref{re8}. There exists $\tau_*\in (0,\tau_0]$ such that, for any $\tau\in (0,\tau_*]$ and any $(u^{\alpha,\tau},w^{\alpha,\tau})\in C^2([0,\tau]\times [0,L];\mathbb{R}^2)$ satisfying \eqref{re6} and \eqref{re7}, 
     there exists a unique solution to
        \begin{equation*} \label{es8}
        \left\{
        \begin{array}{lll}
        r^{\alpha,\tau}_t + (q^{\alpha,\tau} + w^{\alpha,\tau} + \lambda^{\tau})r^{\alpha,\tau}_x - r^{\alpha,\tau}_{xx} + q^{\alpha,\tau} u^{\alpha,\tau}_x - u^{\alpha,\tau}_{xx} = 0&\mbox{in}&(0,\tau)\times(0,L),\\
        q^{\alpha,\tau} - \alpha^2 q^{\alpha,\tau}_{xx} = r^{\alpha,\tau} &\mbox{in}& (0,\tau)\times(0,L),\\ 
        r^{\alpha,\tau}(\cdot,0) =0,\quad  r_x^{\alpha,\tau}(\cdot,L) = 0 &\mbox{in}&(0,\tau),\\
        q^{\alpha,\tau}(\cdot,0) =  0,\quad
      q^{\alpha,\tau}(\cdot,L) = r^{\alpha,\tau}(\cdot,L)&\mbox{in}&(0,\tau),\\
        r^{\alpha,\tau}(0,\cdot) = 0 &\mbox{in}&(0,L),
        \end{array}
        \right.
       \end{equation*}
     satisfying
     \begin{equation*} \label{es9}
     \left\{
     \begin{array}{l}
     r^{\alpha,\tau} \in L^2(0,\tau; H^2(0,L))\cap H^1(0,\tau; L^2(0,L))\cap C^0([0,\tau];H^1(0,L)),\\
     q^{\alpha,\tau}\in L^2(0,\tau; H^4(0,L))\cap H^1(0,\tau; L^2(0,L))\cap C^0([0,\tau]; H^3(0,L))
     \end{array}
     \right.
     \end{equation*}
     and
     \begin{equation*}\label{es111}
   \|r^{\alpha,\tau}\|_{L^2(0,\tau;H^2(0,L))\cap H^1(0,\tau;L^2(0,L))} \leq C.
      \end{equation*}
   Here, $C$ is a positive constant that depends on $L,T,M$ and $\tau$, but it is independent of $\alpha$. 
    Moreover, there exists a constant $K$ that depends on $L,T$ and $M$ (independent of $\alpha$ and $\tau$), such that
    \begin{equation}\label{es11}
   \|r^{\alpha,\tau}\|_{C^0([0,\tau];H^1(0,L))} \leq K\sqrt{\tau}.
    \end{equation}
\end{lem}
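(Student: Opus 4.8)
The plan is to write the system as a fixed-point equation, solve it by a contraction argument for $\tau$ small, and then read off \eqref{es11} from a careful energy estimate; the whole point will be that the singular coefficient $\lambda^\tau$ (whose $L^\infty$ norm blows up like $1/\tau$) produces only \emph{favorable} boundary terms. Concretely, I would fix a constant $\rho>0$ depending only on $L,T,M$ (say $\rho=1$) and work in the closed ball $B_\rho$ of radius $\rho$ of
\begin{equation*}
X_\tau := L^2(0,\tau;H^2(0,L))\cap H^1(0,\tau;L^2(0,L))\cap C^0([0,\tau];H^1(0,L)).
\end{equation*}
To $\bar r\in B_\rho$ I associate $\bar q$ solving $\bar q-\alpha^2\bar q_{xx}=\bar r$, $\bar q(\cdot,0)=0$, $\bar q(\cdot,L)=\bar r(\cdot,L)$; this is well defined because $\bar r(\cdot,L)$ is the trace of an $X_\tau$-function, and the maximum principle together with the elliptic bounds used in the proof of Proposition~\ref{re10} give $\|\bar q(t,\cdot)\|_\infty\le\|\bar r(t,\cdot)\|_\infty$ and $\|\bar q\|_2+\alpha\|\bar q_x\|_2\le C\|\bar r\|_{H^1}$ uniformly in $\alpha$, while bootstrapping the elliptic equation yields the regularity of $q^{\alpha,\tau}$ asserted in the statement. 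Then I define $r=\Psi(\bar r)$ as the unique solution in $X_\tau$ of the \emph{linear} parabolic problem
\begin{equation*}
\left\{
\begin{array}{ll}
r_t-r_{xx}+(\bar q+w^{\alpha,\tau}+\lambda^\tau)r_x=u^{\alpha,\tau}_{xx}-\bar q\,u^{\alpha,\tau}_x & \mbox{in }(0,\tau)\times(0,L),\\
r(\cdot,0)=0,\ \ r_x(\cdot,L)=0 & \mbox{in }(0,\tau),\\
r(0,\cdot)=0 & \mbox{in }(0,L),
\end{array}
\right.
\end{equation*}
which exists with the claimed regularity by standard linear parabolic theory (Faedo--Galerkin, as in the proof of Proposition~\ref{re10}), since $\bar q+w^{\alpha,\tau}+\lambda^\tau\in L^\infty((0,\tau)\times(0,L))$ and, by \eqref{re7} and the bound on $\bar q$, the right-hand side is in $L^2((0,\tau)\times(0,L))$. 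A fixed point of $\Psi$ is exactly a solution of the system.

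The heart of the proof is the energy estimate for the $\Psi$-problem. Multiplying the equation for $r$ by $r$ and by $-r_{xx}$ and integrating in $x$, the boundary conditions $r(\cdot,0)=0$, $r_x(\cdot,L)=0$ and the sign $\lambda\ge0$ make every term involving $\lambda^\tau$ reduce to one of the nonnegative boundary contributions $\tfrac{\lambda^\tau(t)}{2}r(t,L)^2$ or $\tfrac{\lambda^\tau(t)}{2}r_x(t,0)^2$, which can simply be discarded. Using $\|\bar q\|_\infty\le C\|\bar r\|_{H^1}$, $\|w^{\alpha,\tau}\|_\infty\le\|u^{\alpha,\tau}\|_\infty\le CM$, $\|u^{\alpha,\tau}\|_{C^0([0,\tau];C^2([0,L]))}\le CM$ (from \eqref{re7}), the one-dimensional inequality $\|v\|_\infty^2\le2\|v\|_2\|v_x\|_2$ for $v(0)=0$, and Young's inequality, I arrive at
\begin{equation*}
\frac{d}{dt}\|r(t,\cdot)\|_{H^1}^2+\|r(t,\cdot)\|_{H^2}^2\le C\big(M^2+\|\bar r\|_{C^0([0,\tau];H^1)}^2\big)\|r(t,\cdot)\|_{H^1}^2+CM^2 ,
\end{equation*}
and, since $r(0,\cdot)=0$, Gronwall's lemma on $[0,\tau]$ gives $\|r\|_{C^0([0,\tau];H^1)}^2+\|r\|_{L^2(0,\tau;H^2)}^2\le CM^2\tau\,e^{C(M^2+\rho^2)\tau}$. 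Choosing $\tau_*\in(0,\tau_0]$ small (in terms of $L,T,M$) makes the right-hand side $\le\rho^2$ for every $\tau\le\tau_*$, so $\Psi$ maps $B_\rho$ into itself; an identical computation applied to $\Psi(\bar r^1)-\Psi(\bar r^2)$, together with the linearity of $\bar r\mapsto\bar q$, shows that $\Psi$ is a contraction on $B_\rho$ after possibly shrinking $\tau_*$. Banach's fixed-point theorem then produces the unique $(r^{\alpha,\tau},q^{\alpha,\tau})$ with the stated regularity, and uniqueness for the full system follows from the same energy argument applied to a difference of solutions. Note that recovering $\|r^{\alpha,\tau}_t\|_{L^2(L^2)}$ from the equation costs a factor $\|\lambda^\tau\|_\infty\sim1/\tau$ against $\|r_x\|_{L^2(L^2)}=O(M\sqrt\tau)$, which is precisely why the bound on $\|r^{\alpha,\tau}\|_{L^2(H^2)\cap H^1(L^2)}$ is asserted only with a $\tau$-dependent constant, whereas the $C^0([0,\tau];H^1)$ bound stays uniform.

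Finally, to get \eqref{es11} I apply the differential inequality above to the genuine solution (so $\bar r=r^{\alpha,\tau}$ with $\|\bar r\|_{C^0([0,\tau];H^1)}\le\rho$); Gronwall gives $\|r^{\alpha,\tau}(t,\cdot)\|_{H^1}^2\le CM^2\tau\,e^{C(M^2+\rho^2)\tau}\le CM^2e^{C(M^2+\rho^2)}\,\tau$ for all $t\in[0,\tau]$ and all $\tau\le\tau_*\le1$, and since $C$ and $\rho$ depend only on $L,T,M$ this is \eqref{es11} with $K:=\big(CM^2e^{C(M^2+\rho^2)}\big)^{1/2}$. The hard part is really the singular coefficient $\lambda^\tau$: the scheme works only because the mixed boundary conditions convert every appearance of $\lambda^\tau$ in the energy identities into a nonnegative boundary term, so $\lambda^\tau$ never has to be absorbed into a Gronwall constant; the remaining, milder difficulty --- the quadratic coupling $\bar q\,r_x$ --- is handled by the smallness of $r$ for $\tau$ small.
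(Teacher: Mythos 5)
Your proposal is correct, and it supplies exactly what the paper omits: the authors dispose of this lemma in one line (``standard \ldots via a Faedo--Galerkin technique in combination with well known energy estimates''), whereas you build the solution by freezing $\bar r\mapsto\bar q$ in the quadratic terms and running a Banach fixed-point argument. The substance is the same in either route and you have isolated it correctly: multiplying by $r$ and by $-r_{xx}$, the singular drift $\lambda^\tau$ contributes only the nonnegative boundary terms $\tfrac{\lambda^\tau(t)}{2}\,r(t,L)^2$ and $\tfrac{\lambda^\tau(t)}{2}\,r_x(t,0)^2$ because of the mixed conditions $r(\cdot,0)=0$, $r_x(\cdot,L)=0$ and $\lambda\ge 0$ (I checked the signs; they are as you claim, including the vanishing of $r_t(\cdot,0)r_x(\cdot,0)$ and $r_t(\cdot,L)r_x(\cdot,L)$), so $\lambda^\tau$ never enters the Gronwall coefficient; together with the $\alpha$-uniform maximum-principle bounds $\|\bar q\|_\infty\le\|\bar r\|_\infty$ and $\|w^{\alpha,\tau}\|_\infty\le\|u^{\alpha,\tau}\|_\infty\le CM$ from \eqref{re7}, this is precisely what makes $K$ in \eqref{es11} independent of $\tau$ and $\alpha$, while a crude absorption of $\lambda^\tau r_x$ would inject $\|\lambda^\tau\|_\infty\sim 1/\tau$ into the constants and ruin the $\sqrt{\tau}$ bound. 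Your scheme buys an explicit existence proof with uniqueness for free; the Galerkin route the authors intend needs the same a priori estimates plus a compactness passage. One presentational repair: the contraction should be stated in the energy norm $C^0([0,\tau];H^1)\cap L^2(0,\tau;H^2)$ rather than in the full norm of $X_\tau$, since (as you yourself observe) the $H^1(0,\tau;L^2)$ component is recovered from the equation only with a $\tau$-dependent constant and hence neither contracts nor stays in a fixed small ball uniformly; so define the ball through the $C^0(H^1)$ norm (or invoke the paper's Theorem \ref{fixed-point}, contraction in a weaker norm on a ball of a stronger space, checking closedness by weak lower semicontinuity) and read off the $L^2(H^2)\cap H^1(L^2)$ regularity and the $\tau$-dependent bound for the fixed point a posteriori from the linear problem, likewise bootstrapping the elliptic equation for the stated regularity of $q^{\alpha,\tau}$. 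With that adjustment your argument is a complete and faithful expansion of the paper's claim.
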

\begin{proof}
  The proof is standard. It can be easily obtained, for instance, via a Faedo-Galerkin technique in combination with well known energy estimates.  
\end{proof} 

 We can now achieve the proof of Proposition \ref{re}. Indeed,
given $\tau\in (0,\tau_*]$, it is not difficult to see that $(y^{\alpha},z^{\alpha})$ given by $$(y^{\alpha},z^{\alpha}) := (u^{\alpha,\tau} + r^{\alpha,\tau} + \lambda^{\tau},w^{\alpha,\tau} + q^{\alpha,\tau} + \lambda^{\tau})$$
satisfies \eqref{sp25} and \eqref{sp26} with $p^\alpha(t) = \lambda^{\tau}_t$ and boundary controls $v_l^\alpha(t) = u^{\alpha,\tau}(t,0) + r^{\alpha,\tau}(t,0) + \lambda^{\tau}(t)$
and $v_r^\alpha(t) = u^{\alpha,\tau}(t,L) + r^{\alpha,\tau}(t,L) + \lambda^{\tau}(t)$. Moreover, using $\eqref{re6}_6$, \eqref{es11} and the fact that $\lambda^{\tau}$ vanishes in the neighbourhood of $\tau$, we obtain $\eqref{sp27}$.


\subsection{Uniform local exact controllability to the trajectories}\label{localexact}

   The goal of this section is to prove the local exact controllability to space-independent trajectories for the Burgers-$\alpha$ system, with controls and associated states uniformly bounded with respect to $\alpha$ in appropriate spaces. Thus, let $\widehat{m}\in C^1([0,T])$ be given and note that $(\widehat{y}^{\alpha},\widehat{z}^{\alpha}) = (\widehat{m},\widehat{m})$ is a trajectory of viscous Burgers-$\alpha$ system with $(\widehat{p}^{\alpha}(t),\widehat{v}_l^{\alpha}(t),\widehat{v}_r^{\alpha}(t)) = (\widehat{m}'(t),\widehat{m}(t),\widehat{m}(t))$. We have the following result:
   \begin{thm}\label{consttrajcont}
   Let $T, L, \alpha > 0$ and $\widehat{m}\in C^1([0,T])$ be given. There exists $\delta >0$ (independent of $\alpha$) such that, for any initial data $y_0\in H^1(0,L)$ satisfying $\|y_0 - \widehat{m}(0)\|_{H^1}\leq \delta$ there exist  $p^{\alpha} \in C^0([0,T])$ and $(v_l^{\alpha}, v_r^{\alpha}) \in H^{3/4}(0,T;\mathbb{R}^2)$ and associated states $(y^{\alpha},z^{\alpha}) \in L^2(0,T;H^2(0,L;\mathbb{R}^2))\cap H^1(0,T;L^2(0,L;\mathbb{R}^2))$ satisfying \eqref{viscous} and 
      \begin{align}\label{re27}
        y^{\alpha}(T,\cdot)\equiv z^{\alpha}(T,\cdot)\equiv \widehat{m}(T). 
      \end{align}
          Moreover, $p^{\alpha}=\widehat{m}'$ and the following estimates hold:
\begin{equation}\label{v-unif_local}
    \|p^{\alpha}\|_{C^0([0,T])}+
	\|(v_l^{\alpha},v_r^{\alpha})\|_{H^{3/4}([0,T];\mathbb{R}^2)}\leq C \quad \forall \alpha>0,
\end{equation}
    where $C>0$ is a positive constant independent of $\alpha$.
   \end{thm}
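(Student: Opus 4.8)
The plan is first to remove the trajectory by the change of unknowns $u:=y^{\alpha}-\widehat m$, $v:=z^{\alpha}-\widehat m$. Since $\widehat m$ does not depend on $x$, one has $\widehat m_x=\widehat m_{xx}=0$, the distributed control is taken to be $p^{\alpha}=\widehat m'$ (as asserted in the statement), and $(u,v)$ solves
\[
\left\{
\begin{array}{ll}
u_t - u_{xx} + (v+\widehat m)\,u_x = 0 & \text{in }(0,T)\times(0,L),\\
v - \alpha^{2} v_{xx} = u & \text{in }(0,T)\times(0,L),\\
u(\cdot,0)=v(\cdot,0)=v_l^{\alpha}-\widehat m,\quad u(\cdot,L)=v(\cdot,L)=v_r^{\alpha}-\widehat m & \text{in }(0,T),\\
u(0,\cdot)=y_0-\widehat m(0) & \text{in }(0,L),
\end{array}
\right.
\]
with $\|u(0,\cdot)\|_{H^1}\le\delta$, and \eqref{re27} becomes $u(T,\cdot)\equiv 0$. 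Hence it suffices to prove: there is $\delta>0$, independent of $\alpha$, such that for $\|u_0\|_{H^1}\le\delta$ this system is null-controllable by boundary controls, with the boundary controls in $H^{3/4}(0,T)$ and $(u,v)$ in $L^2(0,T;H^2(0,L))\cap H^1(0,T;L^2(0,L))$, all bounded by $C\|u_0\|_{H^1}$ with $C$ independent of $\alpha$; the compatibility conditions at $t=0$ will be built into the construction of the controls.

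\textbf{The linearized problem with $\alpha$-independent constants.} Next I would freeze the transport coefficient. Given $\bar u$ in a small ball of $X:=L^2(0,T;H^2(0,L))\cap H^1(0,T;L^2(0,L))$ with vanishing initial trace, let $\bar v$ solve $\bar v-\alpha^{2}\bar v_{xx}=\bar u$ with boundary data equal to the traces of $\bar u$. By the maximum principle and the uniform elliptic estimates already used in Propositions~\ref{re10} and~\ref{re11}, the filtering operator $(Id-\alpha^{2}\partial^{2}_{xx})^{-1}$ is a uniform contraction, so that $a:=\bar v+\widehat m$ is bounded, uniformly in $\alpha$, in $L^{\infty}((0,T)\times(0,L))$ (indeed in $C([0,T];H^{1}(0,L))$, since $X\hookrightarrow C([0,T];H^{1}(0,L))$). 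For the linear heat equation $u_t-u_{xx}+a\,u_x=0$ with boundary controls, a standard global Carleman estimate for the corresponding adjoint problem (see \cite{Fursikov}) then yields an observability inequality whose constant depends only on $T$, $L$ and norms of $a$ that are uniformly bounded in $\alpha$. Implementing the associated control via the penalized HUM / source–term method, with a time weight that blows up at $t=T$, produces a control and a state $u\in X$ with $u(T,\cdot)\equiv 0$ and $\|u\|_{X}\le C\|u_0\|_{H^1}$, all constants $\alpha$-independent; realizing that control as the trace on $x=0,L$ of a solution of an auxiliary problem on a slightly larger interval carrying an interior control, and invoking the parabolic trace theorem, gives boundary controls in $H^{3/4}(0,T)$ with a bound of the same type.

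\textbf{Fixed point and conclusion.} Then I would set up the map $\Theta:\bar u\mapsto u$ on a small ball of $X$ and show it has a fixed point when $\delta$ is small. The only genuinely nonlinear contribution is $\bar v\,u_x$: since $\bar v$ is a filtered version of $\bar u$, one has $\|\bar v\|_{L^{\infty}((0,T)\times(0,L))}\lesssim\|\bar u\|_{X}$ with $\alpha$-independent constant, while $\|u_x\|_{L^{2}(L^{2})}\le\|u\|_{X}\lesssim\|u_0\|_{H^1}$; hence $\bar v\,u_x$ is quadratically small, so $\Theta$ maps a ball of radius $\simeq\|u_0\|_{H^1}$ into itself and is a contraction once $\delta$ is small enough (alternatively, one invokes an inverse mapping theorem around $u\equiv 0$). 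Undoing the change of unknowns yields $(y^{\alpha},z^{\alpha})\in L^{2}(0,T;H^{2}(0,L;\mathbb{R}^{2}))\cap H^{1}(0,T;L^{2}(0,L;\mathbb{R}^{2}))$ satisfying \eqref{viscous} and \eqref{re27}, with $p^{\alpha}=\widehat m'$ and boundary controls in $H^{3/4}(0,T)$ obeying \eqref{v-unif_local}.

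\textbf{Main obstacle.} The hard part will be uniformity in $\alpha$: every constant — the elliptic bound for $\bar v$, the observability/Carleman constant, and the energy estimates in the fixed point — must be traced back to quantities ($T$, $L$, norms of $a$) that do not involve $\alpha$. The maximum principle keeps the elliptic step uniform, and because the filtering only lowers the order of the nonlocal term, the quadratic smallness of $\bar v\,u_x$ survives with $\alpha$-independent constants. A secondary technical point is obtaining the stated $H^{3/4}(0,T)$ regularity of the boundary controls while keeping $p^{\alpha}=\widehat m'$ exactly; the domain-extension device together with the parabolic trace theorem handles this, in the spirit of the construction already used for Proposition~\ref{re}.
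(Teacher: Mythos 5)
Your overall architecture coincides with the paper's: you subtract the space-independent trajectory (taking $p^{\alpha}=\widehat m'$), reduce Theorem \ref{consttrajcont} to local null controllability of \eqref{consttr}, freeze the transport coefficient, use the maximum principle to bound the filtered velocity uniformly in $\alpha$, obtain the control from a Carleman-based null controllability result for the frozen linear heat equation posed on an extended interval with an interior control supported in the extension region, and recover the $H^{3/4}(0,T)$ boundary controls as traces of the extended controlled state. This is exactly the paper's route: it extends $u_0$ and the filtered velocity by reflection to $(-\eta,L+\eta)$, controls from $(a,b)\subset(L,L+\eta)$ arguing as in \cite[Theorem $1$]{Araruna} (with a constant depending only on $T$, $L$, $\|\widehat m\|_{\infty}$ and the uniform bound on $w^*$), and takes $h^{\alpha}_l=u(\cdot,0)$, $h^{\alpha}_r=u(\cdot,L)$; your uniformity bookkeeping in $\alpha$ matches the paper's.

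The one genuine divergence is the fixed-point step, and there your argument is under-justified. Since you freeze the whole coefficient $a=\bar v+\widehat m$, the equation solved inside the map $\Theta$ is linear and homogeneous, so the ``quadratic smallness of $\bar v\,u_x$'' is the right heuristic only in a source-term formulation, not in the frozen-coefficient one you actually set up. To prove $\Theta$ is a contraction you must estimate the difference of two controlled solutions associated with two coefficients $a_1,a_2$, and that difference contains the difference of the two penalized-HUM controls; a Lipschitz estimate of the control with respect to the coefficient does not follow from the observability inequality alone and needs a separate argument (fixed Carleman weights for coefficients in a bounded set, differentiation of the variational characterization of the minimizer, or a genuinely weighted source-term scheme in the spirit of \cite{marbach}). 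The paper sidesteps this entirely: it runs a Schauder fixed point in $L^{\infty}(0,T;C^0([-\eta,L+\eta]))$, for which only continuity of $\bar u\mapsto u$ and compactness of the image (via \cite{simon}) are required, with no Lipschitz dependence on the frozen coefficient. So either switch to Schauder as the paper does, or rebuild the contraction honestly through the source-term method; as written, the contraction claim is the gap, though a reparable one, and the rest of your proposal is sound.
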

   
    Let us set $(y^{\alpha}, z^{\alpha}) = (u^{\alpha} + \widehat{m},w^{\alpha} + \widehat{m})$ 
    and $p^{\alpha}=\widehat{m}'$. Then, $(u^{\alpha},w^{\alpha})$ must satisfy
         \begin{equation}\label{consttr}
    \left\{
    \begin{array}{lll}
     u^{\alpha}_t - u^{\alpha}_{xx} + (w^{\alpha} + \widehat{m})u^{\alpha}_x =  0  &\mbox{in} & (0,T)\times (0,L),\\
    w^{\alpha} - \alpha^2 w^{\alpha}_{xx} = u^{\alpha} &\mbox{in} & (0,T)\times (0,L),\\
    u^{\alpha}(\cdot,0) = w^{\alpha}(\cdot,0) = h^{\alpha}_l&\mbox{in} & (0,T),\\
    u^{\alpha}(\cdot,0) = w^{\alpha}(\cdot,L) = h^{\alpha}_r &\mbox{in} & (0,T),\\
    u^{\alpha}(0,\cdot) = u_0 &\mbox{in} & (0,L),
    \end{array}
    \right.
    \end{equation}
 where $u_0 := y_0 - \widehat{m}(0)$ and $(h^{\alpha}_l,h^{\alpha}_r):=(v^{\alpha}_l-\widehat{m},v^{\alpha}_r-\widehat{m})$. Therefore, Theorem \ref{consttrajcont} is equivalent to the local null-controllability to \eqref{consttr}. 
     \begin{prop*}
              Let the conditions of Theorem \ref{consttrajcont} be satisfied. There exists $\delta >0$ (independent of $\alpha$) such that, for any initial data $u_0\in H^1(0,L)$ satisfying $\|u_0\|_{H^1}\leq \delta$, there exist $(h_l^{\alpha}, h_r^{\alpha}) \in H^{3/4}(0,T;\mathbb{R}^2)$ and  $(u^{\alpha},w^{\alpha}) \in L^2(0,T;H^2(0,L;\mathbb{R}^2))\cap H^1(0,T;L^2(0,L;\mathbb{R}^2))$ satisfying \eqref{consttr} and 
      \begin{align}\label{consttr1}
        u^{\alpha}(T,\cdot)\equiv w^{\alpha}(T,\cdot)\equiv0. 
      \end{align}
          Moreover, there exists a positive constant $C>0$ (independent of $\alpha$) such that
\begin{equation}\label{v-unif_constt}
	\|(h_l^{\alpha},h_r^{\alpha})\|_{H^{3/4}([0,T];\mathbb{R}^2)}\leq C \quad \forall \alpha>0.
\end{equation}
     \end{prop*}
\begin{proof} The proof of this result relies on a fixed-point argument. 
Thus, given $u_0\in H^1(0,L)$ and $\eta > 0$, one can get by reflection method an extension $u_0^*\in H_0^1(-\eta, L + \eta)$, with 
\begin{align*}\label{re15}
    \|u^{*}_0\|_{H_0^1(-\eta, L+\eta)} \leq C \|u_0\|_{H^1(0,L)}.
\end{align*}
         Let $R > 0$ be given and consider the set
          \begin{align*}
         B_{R}^{\eta}:= \{\bar{u}\in  L^{\infty}(0,T;C^0([-\eta, L + \eta]): \,\, \|\bar{u}\|_{L^{\infty}(0,T;C^0([-\eta,L+\eta])) }\leq R\}.
          \end{align*}
            
         For any $\bar{u}\in B_{R}^{\eta}$, we can easily deduce that there exists a unique solution to
         \begin{equation}\label{re16}
         \left\{
         \begin{array}{lll}
         w - \alpha^2 w_{xx} = \bar{u}1_{(0,L)} &\mbox{in}& (0,T)\times (0,L),\\
         w(\cdot, 0) = \bar{u}(\cdot, 0),\,\ w(\cdot, L) = \bar{u}(\cdot, L) &\mbox{in}& (0,T).
         \end{array}
         \right.
         \end{equation}
         Moreover, using the maximum principle, we obtain that
         \begin{align*}
         \|w\|_{L^{\infty}(0,T;C^0([0,L]))} &\leq C\|\bar{u}\|_{L^{\infty}(0,T;C^{0}([-\eta,L+\eta]))}\leq CR.
         \end{align*}
         Then, again reflection method, we get an extension $ w^*\in L^{\infty}(0,T; C^2([-\eta, L + \eta]))$ with
         \begin{align*}\label{re17}
         \|w^*\|_{L^{\infty}(0,T;C^0([-\eta,L+\eta]))}&
         \leq C\|w\|_{L^{\infty}(0,T;C^0([0,L]))} \leq CR.
         \end{align*}
         
          We assume that $ L < a < b < L + \eta$. Then, arguing as in the proof of \cite[Theorem $1$]{Araruna}, we find $v\in L^\infty((0,T)\times (a,b))$ and $u\in L^2(0,T;H^2(-\eta,L+\eta))\cap L^\infty(0,T;H^1_0(-\eta,L+\eta))$ such that
         \begin{equation}\label{re24}
         \left\{
         \begin{array}{lll}
         u_t - u_{xx} + (w^* + \widehat{m})u_x = v1_{(a,b)}&\mbox{in}& (0,T)\times (-\eta,L + \eta),\\
         u(\cdot,-\eta) = u(\cdot,L + \eta) = 0 &\mbox{in} & (0,T),\\
        u(0,\cdot) = u_0^{*} &\mbox{in}& (-\eta,L + \eta),\\
        u(T,\cdot) = 0 &\mbox{in}& (-\eta,L + \eta), 
         \end{array}
         \right.
         \end{equation}
    and
\begin{equation*}\label{re28}
    \|v\|_{L^\infty(0,T;L^\infty(a,b))} \leq C\|u_0\|_{H^1(0,L)},
\end{equation*}
    for some $C > 0$ of the form
\[
    C:=e^{C_0
    [1 + 1/T + (1 + T)(\|w^*\|_{L^\infty(L^\infty)}^2
    +\|\widehat{m}^2_{\infty})]}.
\]
where $C_0 > 0$ depends on $a,b, L$ and $\eta$. Therefore, it is not difficult to deduce that the norm of $u$ in $H^1(0,T;L^2(-\eta,L+\eta))$, $L^2(0,T;H^2(-\eta,L+\eta))$ and $L^\infty(0,T;H^1_0(-\eta,L+\eta))$ are bounded by $C\|u_0\|_{H^1}$, where $C$ is independent of $\alpha$.     

         Consequently, there exists $\delta > 0$ (independent of $\alpha$) such that, if $\|u_0\|_{H^1}\leq \delta$, one has $\|u\|_{L^{\infty}(0,T;C^0([-\eta,L+\eta]))}\leq R$ and the mapping $ \Lambda_{\alpha} : B^{\eta}_{R} \mapsto B^{\eta}_{R}$,  $\Lambda_{\alpha}(\bar{u}) := u$ is well defined. Note that
          \begin{enumerate}
          \item $\Lambda_{\alpha}$ is well defined and continuous. Indeed, this follows from the uniqueness of solution of \eqref{re16} and \eqref{re24}; the continuity is obtained by using standard parabolic estimates and the fact that, if $\bar{u}_n \rightarrow \bar{u}$ in $L^{\infty}(0,T; C^0([-\eta, L + \eta]))$, then $w^{*}_{n} \rightarrow w^{*}$ in $L^{\infty}(0,T; C^0([-\eta, L + \eta]))$ and, therefore,
          $u_{n} \rightarrow u$ in $L^{\infty}(0,T; C^0([-\eta, L + \eta]))$.
          
          \item $F^{\eta} := \Lambda_{\alpha}(B^{\eta}_{R})$ is relatively compact in $L^{\infty}(0,T; C^0([-\eta, L + \eta]))$. Indeed, one easily obtains that $F^{\eta}$ is bounded in $L^{\infty}(0,T; H_{\eta,0}^1(-\eta, L + \eta))$ and $F_t^{\eta}$ is bounded in $L^2(0,T; L^2(-\eta, L + \eta))$. Hence, applying again \cite[Corollary $4$]{simon}, we get the
          desired compactness.
           \end{enumerate}
           Finally, by applying Schauder's Fixed-Point Theorem, we see that there exists $u\in B_R^{\eta}$ such that $\Lambda_{\alpha}(u) = u$. Then, the couple $(u^{\alpha},v^{\alpha})$, where $u^{\alpha}$ is the restriction to $(0,T)\times (0,L)$ of $u$ and $w$ is the solution to \eqref{re16}, belongs to $L^2(0,T;H^2(0,L;\mathbb{R}^2))\cap H^1(0,T;L^2(0,L;\mathbb{R}^2))$ and satisfies
          \eqref{consttr}, \eqref{consttr1} and \eqref{v-unif_constt}
          with controls $h^{\alpha}_l := u(\cdot,0)$ and $h^{\alpha}_r := u(\cdot,L)$.
\end{proof}


\subsection{Global exact controllability}

	   In this section we prove Theorem \ref{thm2} by combining the results obtained in Sections \ref{smooth}, \ref{approximate} and \ref{localexact}. First recall that given $y_0\in L^{\infty}(0,L)$ and the unique associated solution $(y^{\alpha}_1,z^{\alpha}_1)$ to \eqref{re1}, Proposition \ref{re9} provides a time
	   $T^* \in (0,T/2)$ and a constant $M^* > 0$ (both independent of $\alpha$) such that $y^{\alpha}_1 \in C^0([T^*,T];C^2([0,L]))$ and, moreover,
	         \begin{align}\label{smooth1}
	             \|y^{\alpha}_1\|_{C^0([T^*,T];C^2([0,L]))}\leq M^*.
	         \end{align}
	         
	   Now, let us fix $N\in\mathbb{R}$, let us set 
	   $M := \max\{M^*,|N|\}$ and assume that the constant $\tau^* >0$, furnished by Proposition \ref{re} is small enough, such that $T^* < T/2 - \tau^*$. Then, $y^{\alpha}_{2,0}:= y^{\alpha}_1(T/2 - \tau,\cdot)$ belongs to $C^2([0,L])$ and, from \eqref{smooth1} and Proposition \ref{re}, there exist $p^{\alpha}_2\in C^0([0,\tau])$, $(v^{\alpha}_{l,2},v^{\alpha,2}_{r,2}) \in H^{3/4}(0,\tau;\mathbb{R}^2)$ and associated states $(y^{\alpha}_2,z^{\alpha}_2)\in L^2(0,T;H^2(0,L;\mathbb{R}^2))\cap H^1(0,T;L^2(0,L;\mathbb{R}^2))$ satisfying \eqref{sp25}, \eqref{sp26} and \eqref{sp27}, with initial datum $y^{\alpha}_{2,0}$ and target $y_f = N$. 
	         
	         Finally, decreasing $\tau$ if necessary and setting $y^{\alpha}_{3,0} := y^{\alpha}_2(\tau,\cdot)$, we deduce, thanks to \eqref{sp27}, that $\|y^{\alpha}_{3,0} - N\|_H^1 \leq \delta$, where $\delta > 0$ is the constant given in Theorem \ref{consttrajcont} for a control time $T/2$. Hence, this theorem (applied with $\widehat{m}\equiv N$), guarantees the existence of controls $(v^{\alpha}_{l,3},v^{\alpha}_{r,3})\in H^{3/4}(0,T/2;\mathbb{R}^2)$ such that the associated states $(y^{\alpha}_3,z^{\alpha}_3)$
	         satisfying \eqref{viscous}, \eqref{re27} and \eqref{v-unif_local}, with $p^\alpha\equiv 0$ and initial datum $y^{\alpha}_{3,0}$.

	         To conclude, using $(y^{\alpha}_1,z^{\alpha}_1)$, $(y^{\alpha}_2,z^{\alpha}_2)$ and $(y^{\alpha}_3,z^{\alpha}_3)$, and the associated controls, we can build the required solution, as stated in Theorem \ref{thm2}.

\section{Additional comments and questions}\label{sec:comments}

\subsection{Passage to the limit when $\alpha \rightarrow 0$}

    Theorem \ref{thm1} establishes the existence of uniformly bounded controls for the inviscid Burgers-$\alpha$ equation; the family of associated solutions is uniformly bounded in $C^1([0,T]\times [0,L])$. What happens as $\alpha$ goes to $0$? For uncontrolled nonlocal conservation law, a similar question related to singular limit was studied in \cite{colombo}.
 
    Thanks to Theorem \ref{thm2}, assuming that $y_0\in H_0^1(0,L)$, 
    the family of controls $\{(p^{\alpha},v^{\alpha}_l,v^{\alpha}_r)\}_{\alpha > 0}$ of the viscous Burgers-$\alpha$ systems is uniformly bounded in $C^0([0,T])\times H^{3/4}(0,T;\mathbb{R}^2)$ and the associated family of states $\{y^{\alpha}\}_{\alpha > 0}$ is uniformly bounded in $L^2(0,T;H^2(0,L))\cap H^1(0,T;L^2(0,L))$. It is not difficult to verify that $\{y^{\alpha}\}_{\alpha > 0}$ converges, as $\alpha$ goes to $0$, to a controlled solution to the Burgers equation with same initial datum $y_0$.
    
    An additional interesting question is to determine the order of convergence of $y^\alpha$, in the convergence space.
 
 \subsection{Null controllability with $2$ controls}
 
      In Theorems \ref{thm1} and \ref{thm2}, we have used $3$ scalar controls. It remains open to see whether, using arguments similar to those in \cite{marbach}, it is also possible to prove global uniform null controllability with only $2$ scalar controls. 
 
 \subsection{Global exact controllability to the trajectories}
 
    
       At least two additional questions remain open here: (i) to obtain uniform global exact controllability to trajectories for the viscous Burgers-$\alpha$ with trajectories in 
       $W^{1,\infty}(0,T;W^{1,\infty}(0,L;\mathbb{R}^2))$; (ii) to reduce the number of scalar controls.
 
 
 
 \subsection{Less regular initial conditions}
 
 In \cite{marbach}, the author proved a null controllability result for the Burgers equation with initial datum in $L^2(0,L)$. Is it also possible to control uniformly $L^2$ initial conditions in the case of the Burgers-$\alpha$ system? 
 
 
 
 

\end{document}